\documentclass[a4paper, 11pt]{amsproc}

\usepackage[
    a4paper,
    left=4cm,
    right=4cm,
    top=2.5cm,
    bottom=2.5cm
]{geometry}

\usepackage{defs}
\usepackage{orcidlink}
\usepackage{calc}
\usepackage{graphicx,wrapfig,lipsum}
\usepackage[dvipsnames]{xcolor}
\usepackage{tabularray}
\usepackage{empheq} % for boxing equations
\newtcolorbox{empheqboxed}{colback=white, 
    colframe=black,
    boxrule=0.25mm,
    width=\columnwidth,
    sharpish corners,
    top=-2mm, % default value 2mm
    left=2pt,
    bottom=5pt
}
\definecolor{metablue}{HTML}{0064E0}
\definecolor{metafg}{HTML}{1C2B33}
\definecolor{metabg}{HTML}{F1F4F7}
\definecolor{metabgdeep}{HTML}{D9EFFF}
\definecolor{metagreen}{HTML}{EAFFE8}
\definecolor{metagreen}{HTML}{FCFFEE}
\definecolor{metared}{HTML}{FFEAE8}
\newtheorem{theorem}{Theorem}
\newtheorem{assumption}{Assumption}
\newtheorem{remark}{Remark}
\newtheorem{definition}{Definition}

\newtheorem{proposition}{Proposition}
\newtheorem{lemma}{Lemma}

\newcommand{\edit}[1]{\textcolor{black}{#1}}
\newcommand{\newedit}[1]{\textcolor{black}{#1}}

\renewcommand{\emptyset}{\varnothing}

\DeclareSymbolFont{extraup}{U}{zavm}{m}{n}
\DeclareMathSymbol{\varheart}{\mathalpha}{extraup}{86}
\DeclareMathSymbol{\vardiamond}{\mathalpha}{extraup}{87}
\DeclareMathSymbol{\varclub}{\mathalpha}{extraup}{84}
\DeclareMathSymbol{\vardspade}{\mathalpha}{extraup}{85}

\usepackage[framemethod=tikz,xcolor=true]{mdframed}

\newmdenv[backgroundcolor=metabg, roundcorner=0pt, skipabove=4pt, linewidth=0pt, innertopmargin=8pt]{myframe}

\newmdenv[backgroundcolor=metabgdeep, roundcorner=1pt, skipabove=4pt, linewidth=0pt, innertopmargin=6pt]{myOCP}

\newmdenv[backgroundcolor=metared, roundcorner=10pt, skipabove=7pt, linewidth=0pt, innertopmargin=7pt]{myalgo}
\newmdenv[%
    leftmargin=0.5cm,
    backgroundcolor=yellow!10,%
    roundcorner=5pt,%
    tikzsetting={draw=red, line width=2.0pt}%
    ]{SpecialText}%

    \newmdenv[
  backgroundcolor=metabg,
  roundcorner=5pt,
  skipabove=4pt,
  linewidth=0.5pt,
  innertopmargin=1pt,
  tikzsetting={draw=black, line width=0.5pt},
  frametitlefont=\bfseries,
  frametitlebackgroundcolor=blue!10,
  frametitleaboveskip=0.5ex,
  frametitlebelowskip=0.5ex
]{myOCPboxTitle}

\usepackage{cleveref}

\newcounter{ocpproblem}

\crefname{ocpproblem}{Problem}{Problems}
\Crefname{ocpproblem}{Problem}{Problems}

\title[MP-MFG]{Beyond Nominal Equilibria: Risk-Averse Multi-Population Mean-Field Games}

\author[B. Jeloka]{Bhavini Jeloka\,\orcidlink{0000-0001-7474-1731}}
\author[S. Ganguly]{Siddhartha Ganguly\,\orcidlink{0000-0003-2046-2061}}
\author[P. Tsiotras]{Panagiotis Tsiotras\,\orcidlink{0000-0001-7563-4129} \\
\\
\small (BJ and SG contributed equally)}

\thanks{%
B. Jeloka, S. Ganguly, and P. Tsiotras are with \faGroup\ Daniel Guggenheim School of Aerospace, \faUniversity\ Georgia Institute of Technology, \faMapMarker\  Atlanta, USA.
	Contact Information: (SG) \faHome\ \url{https://sites.google.com/view/siddhartha-ganguly}, \faEnvelope\ \texttt{sganguly41@gatech.edu}, (BJ) \faEnvelope\ \texttt{bjeloka3@gatech.edu}, (PT) \faHome\ \url{https://dcsl.gatech.edu/tsiotras.html}, \faEnvelope\ \texttt{tsiotras@gatech.edu}.
}

\begin{document}
\subjclass[2020]{93E20, 93E03, 49K20, 49L99, 58E25, 65K10} 
\keywords{Mean Field Games, Risk Aversion, Fixed Point Theorems, Variational Analysis.}

\maketitle

\begin{abstract}
Recent advances in mean-field games and its multi-population variants enable large-scale heterogeneous multi-agent systems to be modeled through \emph{representative agents} and their associated \emph{mean-field distributions}. However, existing approaches do not explicitly account for uncertainty in the behavior of \emph{other populations}. 
To this end, we introduce a new paradigm: \emph{risk-averse multi-population mean-field games}, where each population optimizes a worst-case expected reward over dynamically feasible ambiguity sets of mean-field flows of a subset of the other populations. 
%\panos{Mean field flows of all populations? need to be specific to avoid ambiguity}
Employing an occupation-measure formulation along with tools from set-valued analysis, we establish, under mild assumptions, several theoretical properties of the multi-population game, including the geometric properties of the ambiguity sets and the existence of a novel risk-averse multi-population mean-field equilibrium.
Further, we derive contractivity results of the fixed-point operator under entropy regularization and show that it can be utilized to learn the equilibrium.
%
%\panos{not clear what contraction refers to here}
%
Finally, we propose a risk-averse fictitious-play scheme and show that exploitability decays to zero, despite the additional nonlinearity introduced by the worst-case objective. 
We report several numerical experiments to illustrate convergence and risk-averse behavior.
\end{abstract}

\tableofcontents

\section{Introduction}\label{sec:introduction}

Mean-field theory provides a principled framework to address the curse of dimensionality in multi-agent decision making by approximating interactions in the infinite-population limit~\cite{huang2006large,lasry2007mean}.
This perspective has led to mean-field games (MFGs) for non-cooperative settings~\cite{huang2006large, cui2021approximately}, mean-field control (MFC) for cooperative settings~\cite{saldi2023partially, sen2019mean},  multi-population mean-field games (MP-MFGs)~\cite{bensoussan2018meanfieldcontrolmean, perolat2021scalingmeanfieldgames} and mean-field team games (MFTGs)~\cite{guan2024zero, jeloka2025learninglargescalecompetitiveteam}, with the latter two capturing heterogeneous and mixed cooperative–competitive interactions, respectively.

The presence of multiple sub-populations with potentially conflicting objectives imposes additional complexity in MP-MFG and MFTG problems.
Despite \newedit{the} recent progress in computing equilibria for MP-MFGs and MFTGs~\cite{perolat2021scalingmeanfieldgames, guan2024zero}, robustness in these settings remains relatively underexplored. 
In practice, populations face uncertainty arising from environmental noise, unforeseen events, and ambiguity about the behavior and distributions of other populations. 
Such uncertainty motivates risk-averse decision-making, in which agents trade off expected performance against exposure to unfavorable outcomes.
To address this gap, we introduce a framework for \embf{risk-averse multi-population mean-field games}  (RA-MP-MFGs) and study the associated solution concepts along with their theoretical guarantees.

\textbf{A motivating example.} The problem of stock trading is inherently subject to the curse of multi-agency, as aggregate market trends and price discovery emerge from the collective decisions of a vast population of interacting agents. 
To motivate the discussion,
consider a stylized market with algorithmic (AI) traders and human investors trading a single asset.
While both populations seek to maximize expected returns, human behavior is affected by cognitive biases such as trend-following, herding, and panic selling, as well as risk aversion toward stochastic price fluctuations.
Since AI traders cannot directly observe or predict the realized human population distribution, which is generally infeasible to observe or compute, they must account for deviations from its nominal trajectory, motivating strategic risk aversion to uncertainty in human behavior.

Our central idea is to replace each risk-averse population’s nominal mean-field trajectory with a dynamically feasible ambiguity set of plausible trajectories and optimize the worst-case expected reward over this set. 
This yields a coupled robust optimization problem in which each population accounts for uncertainty in the behavior of the others, leading to a new \emph{risk-averse equilibrium} concept.

\textbf{Contributions.} Our primary advancements are the following:

\begin{enumerate}[label=\textup{(Cont-\alph*)}, leftmargin=*, widest=b, align=left]

\item We introduce a framework for RA-MP-MFGs and study the associated solution concepts and their theoretical guarantees. Specifically, using tools from set-valued theory and variational analysis, we establish the existence of a risk-averse multi-population mean-field equilibrium through an \emph{occupation-measure} formulation and fixed-point arguments;

\item we show that entropy-regularized RA-MP-MFGs approximate the original equilibrium under suitable conditions via a novel occupation-measure-dependent norm; and 

\item we establish convergence of \emph{fictitious play} under mild assumptions on the reward function, despite the nonlinear worst-case objective.
Numerical experiments, including a real-world, data-driven stock-trading example, demonstrate algorithmic convergence and characterize the resulting risk-averse policies under distributional uncertainty.
\end{enumerate}
%%%%%%%%
We keep our technical results at an \emph{informal level} in the main body of the article; detailed technical statements along with their proofs can be found in Appendix \ref{sec:reach:set:appen}--\ref{thm:fp_FH:appen}.\footnote{A technical `tl;dr' is in Appendix \ref{sec:tech:tldr:appen}; also see Figure \ref{fig:theoretical-roadmap}, which summarizes our theoretical results.} Moreover, additional numerical details are in Appendix \ref{sec:add:num:det:appen}.

\textbf{Related Work.} Multi-population MFGs extend the standard MFG framework to heterogeneous populations with distinct dynamics, rewards, and policies, and have been studied under both decentralized open-loop policies~\cite{perolat2021scalingmeanfieldgames, lauriere2022scalabledeepreinforcementlearning} and feedback information structures~\cite{shao2024reinforcementlearningfinitespace, subramanian2023meanfieldgamesteams, subramanian2022multitypemeanfield}. 
Existing formulations primarily optimize expected rewards and generally do not explicitly account for uncertainty or adversarial deviations in the population trajectories generated by other populations. 
Zero-sum multi-team formulations~\cite{guan2024zero, jeloka2025learninglargescalecompetitiveteam} capture strategic interactions but rely on additional structural assumptions, such as local-state-independent rewards or mean-field-dependent policies. 
In contrast, we consider decentralized policies that remain robust to deviations of other populations from their nominal trajectories.
This perspective is natural in decentralized settings, where each population has no control over the dynamics or the actions of other populations.

Risk-sensitive and robust MFGs have also been studied in continuous- and discrete-time settings~\cite{moon2014linear, tembine2013risk, tembine2015risk, perrin2020fictitiousplaymeanfield, saldi2020approximate, cheng2023risk, bonnans2021discrete, yang2025structuralreinforcementlearningheterogeneous}, but existing approaches typically model environmental or model uncertainty~\cite{liang2026meanfieldgamesmodeluncertainty}, uncertainty over initial distributions~\cite{jeloka2026robustmeanfieldgamesrisk}, entropic uncertainty~\cite{delarue2026robustmeanfieldgamesentropybased} or common noise affecting the population collectively~\cite{carmona2015meanfieldgamescommon, cardaliaguet2025meanfieldgamescommon}. 
Our framework, instead, focuses on the multi-population regime, where we analyze \emph{strategic uncertainty} arising from deviations in the mean-field trajectories of other populations while retaining the underlying population dynamics. 
%
%In contrast to prior discrete-time formulations that often rely on general Polish spaces or additional structural assumptions, we consider finite state--action spaces with explicit dynamics, enabling a tractable MDP and occupation-measure formulation.

% ==================================
\section{Preliminaries: MP-MFGs}\label{sec:prob:form}
% ==================================

\textbf{Notation.} Let \(\mathbb{N}\) be the set of of natural numbers and for any \(d\in \mathbb{N}\), \(\mathbb{R}^d\) is the \(d\)-dimensional Euclidean space which is equipped with the standard norm \(\Real[d] \ni x \mapsto \norm{x}\) and inner product \(\inprod{\cdot}{\cdot}\).  For every fixed \(N\in\Nz\), we write \([N] \Let \{1,\ldots,N\}.\)
For a set \(A\) and an element \(e\), we denote the indicator of \(A\) by \(\mathds{1}_{A}(e)\), where
\begin{align*}
\mathds{1}_{A}(e) \Let \begin{cases} 1, & e\in A,\\ 0, & e\notin A.
\end{cases}
\end{align*}
In particular, \(\mathds{1}_{\{a\}}(b)=1\) if \(a=b\), and \(\mathds{1}_{\{a\}}(b)=0\) otherwise.
Let \(E\) be a finite and nonempty set. We denote the set of probability measures on \(E\) by
\begin{align*}
\P(E) \Let \aset[\Big]{\mu:E\to \lcrc{0}{1} \suchthat \sum_{e\in E}\mu(e)=1}.    
\end{align*}
Since \(E\) is finite, we identify every probability measure \(\mu\in\P(E)\) with its probability-mass vector \((\mu(e))_{e\in E}\). For \(v\in\mathbb{R}^{E}\), we denote its \(\ell^{1}\)-norm by \(\|v\|_{1}:=\sum_{e\in E}|v(e)|.\) For two measures \(\mu\) and \(\nu\) on the same measurable space, we write \(\mu \ll \nu\) if \(\mu\) is absolutely continuous with respect to \(\nu\), i.e., if \(\nu(A)=0\) implies \(\mu(A)=0\) for all Borel subsets \(A\); see \cite{book1,ref:FolReal-99} for more details.

\textbf{Setting and MP-MFGs.} We briefly review MP-MFGs and subsequently establish our risk-averse framework for heterogeneous large population multi-team games.
 We consider a finite-horizon, discrete-time, general-sum stochastic game involving \(M \in \mathbb{N}\) large populations, with each population representing a distinct group of agents, thereby capturing macroscopic heterogeneity. We have the following problem data:

\begin{enumerate}[label=\textup{(D-\alph*)}, leftmargin=*, widest=b, align=left]

\item We use \(j\in[M]\) to index the populations. 
The state and action spaces of the game are finite, nonempty sets \(\X\) and \(\U\), respectively, both equipped with their discrete \(\sigma\)-algebras. 

\item For each population \(j\in[M]\), the state of the \textit{representative agent} at time index $t =0,1,\ldots, T$, will be denoted by $x_t^j$, with initial state distributed according to
\(X_0^j\sim\mu_0^j\in\P(\X)\). Henceforth, we use the term ``agent'' to mean a representative agent.

\item At each time \(t\in\{0,\ldots,T-1\}\), the agent observes its current state \(X_t^j=x_t^j\in\X\) and selects an action \(U_t^j=u_t^j\in\U\) according to a policy \(\pi_t^j:\X \times \U \to [0,1]\).
Here, \(\pi_t^j(\cdot|x)\in\P(\U)\) for every \(x\in\X\). We let \(\Pi_t\) denote set of randomized Markov decision rules at time \(t\); to wit
\begin{align*}
\Pi_t \Let \aset[\Big]{\pi_t:\X\times\U\to[0,1]\suchthat \sum_{u\in\U}\pi_t(u|x)=1\text{ for every }x\in\X}.
\end{align*}
An element of this set assigns an action distribution \(\pi_t(\cdot|x)\in\P(\U)\) to each state \(x\in\X\). Thus \(\Pi_t\) can be identified with \(\prod_{x\in\X}\P(\U)\).

\item Let \(\Pi^j \Let \prod_{t=0}^{T-1}\Pi_t\) be the {set} of policies for agent $j$, and let $\Pi\Let\prod_{j=1}^M\Pi^j$ be the set of policies of all representative agents.
Using the action $u_t^j$ selected according to the policy 
\(\pi_t^j\), agent $j$ then transitions to \(X_{t+1}^j\) according to the controlled transition kernel $f_t^j(\cdot\mid x_t^j,u_t^j)\in\P(\X)$.
As in many classical MFG formulations~\cite{lasry2007mean, perolat2021scalingmeanfieldgames}, 
we assume throughout that the transition kernels are independent of the multi-population distribution.

\end{enumerate}

\textbf{Mean-field.} Let \(\mu_t^j \in \P(\X)\) 
denote the distribution of population $j$ at time $t$ and let $\mu_t \Let (\mu_1^1,\dots,\mu_t^N) \in \P(\X)^M$ denote the collection of the distributions of all populations at time $t$.
We represent the \textit{mean-field flow} of population \(j\) from $t=\{0,\ldots, T\}$ by \(\mu^j\Let(\mu_t^j)_{t=0}^{T}\in \M^j \subseteq \P(\X)^{T+1}\) where $\M^j$ is the set of all mean-field flows of population $j$.
Further, \(\mu\Let(\mu^1,\ldots,\mu^M)\) denotes the collection of the mean-field flows of all populations.
Let $\M\Let\prod_{j=1}^M\M^j$ denote the set of all such flows. 
For a multi-population distribution \(\mu_t\), the collection of distributions of all populations other than $j$ is $\mu_t^{-j} \Let \bigl( \mu_t^1,\ldots,\mu_t^{j-1},\mu_t^{j+1},\ldots,\mu_t^M \bigr)$.
Hence,  $\mu_t = (\mu_t^j,\mu_t^{-j})$.

Since all agents in population $j$ follow the same policy 
\(\pi^j\) {(i.e., the policy of the representative agent)}, given a fixed initial distribution \(\mu_0^j\in\P(\X)\), the evolution of the distribution of the $j$-th population
is deterministic and can be computed recursively via 
\begin{equation} \label{eqn:mf-dynamics-local}
\mu_{t+1}^j(x')=\sum_{x\in\X}\sum_{u\in\U}f_t^j(x'\mid x,u)\pi_t^j(u\mid x)\mu_t^j(x)
\end{equation}
for all \(x\in\X\) and \(t\in\{0,\ldots,T-1\}.\) We sometime use by \(\nxtmf\) instead of \(\mu_{t+1}^j(x')\) to denote the next state.
The mean-field dynamics \eqref{eqn:mf-dynamics-local} can also be expressed in a compact matrix form as $\mu^j_{t+1} = \mu^j_{t} F^j_t(\pi^j_t)$.

For each \(j\in[M]\), let $\B_{\prop}^j:\Pi^j\to\M^j$ denote the propagation operator associated with~\eqref{eqn:mf-dynamics-local} and the fixed initial distribution \(\mu_0^j\). Accordingly, $\B_{\prop}:\Pi\to\M$ is defined by $\B_{\prop}(\pi)
\Let
\bigl(
\B_{\prop}^1(\pi^1),\ldots,
\B_{\prop}^M(\pi^M)
\bigr)$ where $
\pi\Let(\pi^1,\ldots,\pi^M)$ is the joint policy profile, and propagates all distributions. For a fixed mean-field distribution \(\mu_t^j\), varying the local policy \(\pi_t^j\) induces a set of distributions that can be reached at the next time step. 
This motivates the following definition.
\begin{myframe}
\begin{definition}[\highlight{Reachable set}]
\label{def:reachable-set}
Fix \(j\in[M]\), \(t\in\{0,\ldots,T-1\}\), and \(\mu_t^j\in\P(\X)\).
The \textbf{one-step reachable set} of population \(j\) from \(\mu_t^j\)  is defined by $\reachset \Let \{\nxtmf\in\P(\X)~\big\vert~ \exists\,\pi_t^j\in\Pi_t\text{ such that }\mu^+=\mu_t^jF_t^j(\pi_t^j)\}$.
\end{definition}
\end{myframe}
By definition, \(\reachset\) is a polytope contained in \(\P(\X)\); see Appendix \ref{sec:reach:set:appen} for a proof. For a multi-population distribution \(\mu_t\Let(\mu_t^1,\ldots,\mu_t^M)\in\P(\X)^M\), we denote the vector of distributions of all populations other than \(j\) by
\begin{align*}
\mu_t^{-j}\Let \bigl( \mu_t^1,\ldots,\mu_t^{j-1},\mu_t^{j+1},\ldots,\mu_t^M \bigr).
\end{align*}
For each \(j\in[M]\) and \(t\in\{0,\ldots,T-1\}\), we further define
\begin{align}
\R_t \bigl(\mu_t^{-j} \bigr) \Let \prod_{k\neq j}\R_t^k \bigl(\mu_t^k\bigr),\nn
\end{align}
which is the joint one-step reachable set of all populations other than \(j\).

\textbf{Reward.} The reward received by the representative agent of population $j$ at time $t$ is given by $r_t^j:\X\times\P(\X)^M\to\mathbb{R}$. 
The reward depends on the distribution flows of \emph{all} populations, and hence this is the main mechanism by which the decisions (and hence the resulting flows) of the populations are coupled. 
%%%%%%%%%%%%
\begin{assumption}
\label{assmpt:lipschitiz-model}
For each \(j\in[M]\), there exists \(L_r^j>0\) such that
$|r_t^j(x,\nu)-r_t^j(x,\nu')|\leq L_r^j\dtv{\nu,\nu'}$ for all \(t\in\{0,\ldots,T\}\), \(x\in\X\), and \(\nu,\nu'\in\P(\X)^M\). 
\end{assumption}
Lipschitz continuity of the rewards is standard in the MFG literature~\cite{huang2006large, gu2021mean, cui2023learning}.

Occupation measures provide a natural representation of policies and mean-field flows, with a linear structure that facilitates the characterization of dynamically feasible ambiguity sets and the analysis of the game~\cite{yu2026occupation, hu2025mf}.

\textbf{Occupation measure.} For each \(j\in[M]\), let \(\Occ^j\subset \M^j\times\bigl(\mathbb{R}_+^{\X\times\U}\bigr)^T\) 
denote the set of occupation measures \(o^j\Let(\xi^j,\eta^j)\), where \(\xi^j\Let\{\xi_t^j\}_{t=0}^T\) and \(\eta^j\Let\{\eta_t^j\}_{t=0}^{T-1},\) satisfying
\begin{align}
\xi_0^j=\mu_0^j,\quad \sum_{u\in\U}\eta_t^j(x,u)=\xi_t^j(x) \label{occ:constraint:1} \\
\xi_{t+1}^j(y)=\sum_{x\in\X}\sum_{u\in\U}\eta_t^j(x,u)f_t^j(y|x,u) \label{occ:constraint:2}
\end{align}
for all \(x,y\in\X\) and \(t=0,\ldots,T-1.\) 
Here, \(\xi_t^j\) and \(\eta_t^j\) represent, respectively, the state distribution and the joint state--action distribution of the representative agent of population \(j\) at time \(t\). Under a Markov policy \(\pi^j\), we have \(\xi_t^j(x)=\probmeas^{\pi^j}(X_t^j=x)\) for \(t=0,\ldots,T\), and \(\eta_t^j(x,u)=\probmeas^{\pi^j}(X_t^j=x,U_t^j=u)=\xi_t^j(x)\pi_t^j(u\mid x)\) for \(t=0,\ldots,T-1\), where \(\probmeas^{\pi^j}\) denotes the probability law induced by the initial distribution \(\mu_0^j\), the transition kernels \(f_t^j\), and the policy \(\pi^j\). 
We denote the joint occupation-measure space by \(\Occ\Let\prod_{j=1}^M\Occ^j.\) For \(o\Let(o^1,\ldots,o^M)\in\Occ\), we denote the corresponding multi-population state-flow by \(\xi(o)\Let(\xi^1,\ldots,\xi^M).\)

\textbf{Objective.} Given a multi-population flow \(\mu\in\M\), the cumulative performance of the representative agent of population $j$ under policy \(\pi^j\) is
\begin{align}\label{eqn:performance-team-infinite}
    \objective_j \big(\pi^j, \mu\big) =
    \E^{\pi^j}
    \Bigg[\sum_{t=0}^{T} r^j_t(x^j_t, \mu_t) \Bigg].
\end{align}
We use the correspondence \(\B_\opt:\M\rightrightarrows\Pi\) to denote the multi-population best-response correspondence, defined by\footnote{In variational analysis, economics, and game theory, the word correspondence \cite[Chapter 17]{ref:AliBor-06} is commonly used for a set-valued objects, for example \(F:\mathsf{A} \rightrightarrows \mathsf{B}\). We will occasionally use the term `mapping' for such objects in the sequel.}
\begin{align*}
\B_\opt(\mu)\Let\prod_{j=1}^{M}\argmax_{\pi^j\in\Pi^j}J^j(\pi^j,\mu).
\end{align*}

Using the operators $\B_\prop$ and $\B_\opt$, we define the multi-population mean-field equilibrium (MFE).
\begin{myframe}
\begin{definition}[\highlight{MP-MFE~\cite{huang2006large}}]\label{def:mfe}
  The MP-MFE is a consistent pair \((\pi^\star,\mu^\star)\in\Pi\times\M\) such that
\begin{align*}
\pi^\star\in\B_\opt(\mu^\star) \quad\text{and}\quad \mu^\star=\B_\prop(\pi^\star).
\end{align*}
\end{definition}
\end{myframe}
Under Assumption~\ref{assmpt:lipschitiz-model}, the existence of such consistent pair can be established through Kakutani's fixed point argument~\cite{cui2021approximately}. 
Lastly, we define exploitability in order to characterize a policy relative to the optimal, reward maximizing policy. 
%%%
\begin{myframe}
\begin{definition}[\highlight{Exploitability~\cite{perolat2021scalingmeanfieldgames}}]
\label{defn:exploitability:normal}
For a policy profile \(\pi\in\Pi\), let \(\mu^\pi\Let\B_\prop(\pi)\) denote the induced multi-population mean-field flow. The exploitability of \(\pi\) measures the distance to an equilibrium and is defined a
\begin{align*}
\phi(\pi)\Let\sum_{j=1}^{M}\phi^j(\pi),
\quad
\phi^j(\pi)\Let\max_{\pi'^j\in\Pi^j}\objective_j(\pi'^j,\mu^\pi)-\objective_j(\pi^j,\mu^\pi)
\end{align*}
for each \(j\in[M]\).
A policy profile \(\pi\in\Pi\) is a MP-MFE if and only if \(\phi(\pi)=0\).
\end{definition}
\end{myframe}

Although $\objective^j(\pi^j,\mu)$ depends on all population flows, population/agent $j$ controls only its own policy and therefore it must rely on nominal beliefs about the other team population flows. 
Deviations from these beliefs create strategic uncertainty, motivating policies that are robust to unfavorable mean-field trajectories of the other populations.
This motivates the incorporation of risk aversion with respect to such uncertainties in population distributions and adapting the current objective to increase robustness.

\section{Risk-Averse MP-MFGs}\label{subsec:risk:aversion}
In this section, we formulate the Risk-Averse Multi-Population Mean-Field Game (RA-MP-MFG) that adapts the standard MP-MFG objective~\cite{perolat2021scalingmeanfieldgames}
to account for uncertainties in population distributions.
In particular, we analyze the worst-case return that population $j$ can obtain under distributional uncertainty with respect to all other population types.
To this end, we formulate constraints imposed by other populations.
Fix \(\eps \in [0,1]\), and define the \textbf{dynamically feasible ambiguity set} \(\mathcal{W}_{j}(\mu,\eps)\) by 
\begin{align}\label{eq:ambiguity:set}
\hspace{-2mm}\mathcal{W}_j(\mu,\eps)\Let  \left\{
    \begin{gathered}
    \{\hat\mu_t^{k}\}_{t=0}^T, \,
    \forall k\in [M],\ k\neq j
    \end{gathered}
   \;\middle\vert\;
    \begin{array}{@{}l@{}}
    \hat\mu_0^{k}=\mu_0^{k},\,
    \hat\mu_{t+1}^{k}\in \mathcal{R}_t^k(\hat\mu_t^k), \\[2pt]
    \dtv{\mu_t^k,\hat\mu_t^k}\le \eps \hspace{0mm}
    \end{array}
    \right\}.
\end{align}
which contains all reachable flows that are not ``far away'' from their nominal values and \(\hat{\mu}^{k}_{t+1} \in \mathcal{R}_t^k(\hat\mu^k_t), k\neq j\) ensures that the trajectories \(\{\hat{\mu}^k_t\}_{t=0}^{T}\) are {dynamically feasible}.

\begin{remark}
The set $\mathcal{W}_j(\mu,\eps)$ in~\eqref{eq:ambiguity:set} assumes that population $j$ is risk-averse to every population $k\neq j$. 
This can be generalized to the setting where $j$ is risk-averse only to a subset \(\mathcal{K}_j\subseteq[M]\setminus{j}\) of populations, by defining the ambiguity set only over the corresponding mean-field trajectories. 
Under Assumption~\ref{assmpt:lipschitiz-model}, the existence and entropy-approximation results extend directly to this generalized setting, while the contraction and fictitious-play guarantees can be extended by strengthening the contraction condition to account for the Lipschitz dependence on the remaining nominal populations.
The multi-population chasing example in Section~\ref{sec:numexp} ahead empirically demonstrates this extension.
\end{remark}

%%%%%%%%%%%%%%%%%%%%
\begin{myOCP}
\begin{proposition}[\highlight{Informal}]
\label{prop:ambiguity:set}
Fix \(j\in[M]\), \(\mu \in\M\), and let \(\eps \in [0,1]\). 
Then, \(\ambset(\mu,\eps)\) is a nonempty compact polytope. 
\end{proposition}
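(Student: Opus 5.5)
The plan is to lift the ambiguity set into occupation-measure coordinates, where every constraint becomes linear. There $\ambset(\mu,\eps)$ turns out to be the projection of a bounded polyhedron, hence a compact polytope. Nonemptiness follows by exhibiting the nominal flows, provided they are dynamically feasible.

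\textbf{Step 1 (lifting).} Since $\ambset(\mu,\eps)$ is a product over $k\neq j$ of per-population sets, it suffices to treat a single $k$. A product of finitely many nonempty compact polytopes is again one. Fix $k$ and consider the set $\mathcal{S}^k$ of pairs $(\hat\xi^k,\hat\eta^k)$ satisfying \eqref{occ:constraint:1}--\eqref{occ:constraint:2} with $\hat\eta^k\ge 0$.

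\textbf{Step 2 (equivalence with reachability).} I claim that a flow $\{\hat\mu^k_t\}$ satisfies $\hat\mu^k_0=\mu^k_0$ and $\hat\mu^k_{t+1}\in\R^k_t(\hat\mu^k_t)$ for all $t$ if and only if $\hat\mu^k=\hat\xi^k$ for some $(\hat\xi^k,\hat\eta^k)\in\mathcal{S}^k$.

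For the forward direction, given $\pi^k_t$ witnessing reachability, set $\hat\eta^k_t(x,u)=\hat\mu^k_t(x)\pi^k_t(u|x)$.

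For the converse, given $\hat\eta^k_t$, define $\pi^k_t(u|x)=\hat\eta^k_t(x,u)/\hat\xi^k_t(x)$ when $\hat\xi^k_t(x)>0$, and an arbitrary distribution otherwise. Then $\hat\mu^k_t F^k_t(\pi^k_t)=\hat\xi^k_{t+1}$, because rows with $\hat\xi^k_t(x)=0$ contribute nothing. This zero-mass case is the only delicate point, and it is the same mechanism behind the polytope claim for $\reachset$ proved in Appendix \ref{sec:reach:set:appen}.

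\textbf{Step 3 (polyhedrality and compactness).} The total-variation constraint $\dtv{\mu^k_t,\hat\xi^k_t}\le\eps$, i.e.\ $\tfrac12\sum_x|\mu^k_t(x)-\hat\xi^k_t(x)|\le\eps$, is not linear as written. I would linearize it with slack variables $s_t(x)\ge\pm(\mu^k_t(x)-\hat\xi^k_t(x))$ and $\tfrac12\sum_x s_t(x)\le\eps$. Equivalently, it is a finite intersection of half-spaces $\sum_x\sigma(x)(\mu^k_t(x)-\hat\xi^k_t(x))\le 2\eps$ over sign vectors $\sigma\in\{\pm1\}^{\X}$.

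The lifted set is therefore a polyhedron. It is bounded because the equality constraints \eqref{occ:constraint:1}--\eqref{occ:constraint:2} force $\sum_{x,u}\hat\eta^k_t(x,u)=1$, so every entry lies in $[0,1]$, and each $s_t(x)\le 2$. A bounded polyhedron is a polytope (Minkowski--Weyl). Its image under the linear projection onto the $\hat\xi^k$ coordinates is again a polytope, and in particular it is compact.

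\textbf{Step 4 (nonemptiness).} Here I would use that the nominal $\mu^k\in\M^k$ is a feasible flow, generated by some policy through \eqref{eqn:mf-dynamics-local} from $\mu^k_0$. This holds because $\M^k$ consists of flows obtained by $\B^k_\prop$, which I will take as the standing interpretation. Then $\hat\mu^k=\mu^k$ satisfies all constraints with zero TV distance, for every $\eps\ge0$.

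\textbf{Main obstacle.} I expect the main obstacle to be Step 4 if $\M^k$ is read more broadly as arbitrary flows in $\P(\X)^{T+1}$. In that case nonemptiness can fail for small $\eps$, and the statement needs the nominal flow to be dynamically feasible, which I would make explicit as a hypothesis.
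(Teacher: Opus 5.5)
Your proposal is correct and follows essentially the same route as the paper. The paper also lifts to linear constraints on $(\hat\mu^{-j},\eta^{-j},s^{-j})$, using slack variables for the total-variation bound, shows the lifted set is a bounded polyhedron, projects onto the flow coordinates, and gets nonemptiness from $\hat\mu^{-j}=\mu^{-j}$. Your Step 4 caveat matches the paper's handling too: the detailed version (Proposition~\ref{prop:ambiguity:set:appen}) makes dynamic feasibility of the nominal opponent flows an explicit hypothesis for nonemptiness, while compactness and polyhedrality hold unconditionally.
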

\end{myOCP}
%%%%%%%%%%%%%%%%%%%%

\begin{remark}[Policy--occupation-measure equivalence]\label{rem:policy:occ:equivalence}
\edit{The occupation-measure representation in \eqref{occ:constraint:1}--\eqref{occ:constraint:2} is equivalent to the Markov-policy representation \cite{ref:YG:AA:LH:KHJ:Dist:Reach:MDP:TAC}.
Indeed, let \(\pi^j\in\Pi^j\), let \(\xi^j=\B_\prop^j(\pi^j)\) be the induced state-flow, and define
\begin{equation}\label{eq:policy:to:occupation:old}
\eta_t^j(x,u)\Let \xi_t^j(x)\pi_t^j(u|x)\quad \text{for }x\in\X,\ u\in\U,\ t=0,\ldots,T-1.
\end{equation}
Then, \(o^j=(\xi^j,\eta^j)\in\Occ^j\). Conversely, every \(o^j=(\xi^j,\eta^j)\in\Occ^j\) induces a Markov policy by
\begin{equation}\label{eq:markov:policy:old}
\pi_t^j(u|x)\Let\frac{\eta_t^j(x,u)}{\xi_t^j(x)}
\quad\text{if }\,\xi_t^j(x)>0,
\end{equation}
and by choosing \(\pi_t^j(\cdot|x)\) arbitrarily in \(\P(\U)\) whenever \(\xi_t^j(x)=0\). Indeed, if \(\xi_t^j(x)=0\), then \eqref{occ:constraint:1} and the nonnegativity of \(\eta_t^j\) imply that \(\eta_t^j(x,u)=0\) for every \(u\in\U\).
The policy constructed in \eqref{eq:markov:policy:old} generates the state-flow \(\xi^j\) and the state-action occupation flow \(\eta^j\).
Consequently, \(\xi^j\in\M^j\) for every \(o^j\in\Occ^j\), and hence \(\xi(o)\in\M\) for every \(o\in\Occ\).}
\end{remark}

From the point of view of population $j$, we are interested in evaluating the performance of a policy $\pi^j$ under the \emph{risk-averse payoff} defined below.
%%%%%%%%%%%%%%%%%%%%
\begin{myframe}
\begin{definition}[\highlight{Risk-averse payoff}]\label{def:RA:payoff}
Fix \(\mu\in\M\) and $\eps \in [0,1]$. 
For each $j\in[M]$ and $\pi^j\in\Pi^j$, the \textbf{risk-averse payoff} of population $j$ is defined by 
\begin{equation}\label{J-bar-inf}
\hspace{-1mm}\bar{\objective}_j(\pi^j;\mu) \Let \hspace{-2mm} \inf_{\widehat\mu^{-j}\in\ambset(\mu,\eps)}\E^{\pi^j}\left[\sum_{t=0}^{T} r_t^j(x_t^j,\mu_t^j,\widehat\mu_t^{-j})\right]\hspace{-1mm},
\end{equation}
where \(x_0^j\sim\mu_0^j\), \(u_t^j\sim\pi_t^j(\cdot|x_t^j)\), and \(x_{t+1}^j\sim f_t^j(\cdot|x_t^j,u_t^j)\). 
\end{definition}
\end{myframe}
%%%%%%%%%%%%%%%%%%%%
We further show in Appendix~\ref{sec:existence:normal:appen} that~\eqref{J-bar-inf} can be reformulated in terms of the occupation measure as
\begin{align}\label{eq:occ:phi}
\Phi_j(\tilde{o}^j;o)\Let \inf_{\widehat\mu^{-j}\in \ambset(\xi(o),\eps)}\sum_{t,x}\tilde \xi_t^j(x)r_t^j\bigl(x,\xi_t^j,\widehat\mu_t^{-j}\bigr)  
\end{align} 
where \(\tilde{o}^j \Let (\tilde{\xi}^j,\tilde{\eta}^j)\in\Occ^j\).

Given a nominal flow $\mu$, population $j$ chooses $\pi^j$ to maximize $\bar{\objective}_j(\pi^j;\mu)$ against dynamically feasible 
adversarial deviations $\hat{\mu}_t=(\mu_t^j,\hat{\mu}_t^{-j})$ for all \(t\in\{0,\ldots,T\}\)
of the other populations' mean-field trajectories.
Define \(\briskopt:\M\to\Pi\) as the \emph{risk-averse best-response correspondence}, i.e., for each \(\mu\in\M\), \((\pi^1,\ldots,\pi^M)\in\briskopt(\mu)\), where, for every \(j\in[M]\), 
\[\pi^j \in \argmax_{\pi^j\in\Pi^j}\bar{\objective}_j(\pi^j;\mu).
\]

\begin{myframe}
\begin{definition}[\highlight{Risk-averse exploitibility}]
\label{def-ra-explt}
    For any \(\eps \in [0,1]\) and $\pi\in\Pi$, let \(\mu^\pi=\B_\prop(\pi)\). 
The risk-averse exploitability is defined as $\phi_{\RA}(\pi)=\sum_{j=1}^M\phi_{\RA}^j(\pi)$, where $\phi_{\RA}^j(\pi)=\sup_{\tilde\pi^j\in\Pi^j}
\bar{\objective}_j(\tilde\pi^j;\mu^\pi) - \bar{\objective}_j(\pi^j;\mu^\pi),~\forall j\in[M]$.
\end{definition}    
\end{myframe}

The quantity \(\phi_{\RA}^j(\pi)\) is the largest
improvement population \(j\) can obtain by unilaterally replacing
its current policy {$\pi$} with a risk-averse best response, while \(\mu^\pi\) is held fixed. In particular,
\(\phi_{\RA}^j(\pi)\geq 0\), and it vanishes precisely when
\(\pi^j\) is a risk-averse best response to \(\mu^\pi\).

\begin{myframe}
\begin{definition}[\highlight{Risk-averse equlibrium}]\label{def:RA:MP:MFE}
Fix \(\eps \in [0,1]\) and let \(\pi^\star \in \Pi\) and \(\mu^\star \in \M\).
A pair \((\pi^\star,\mu^\star) \in \Pi \times \M\) is a \embf{Risk-Averse Multi-Population Mean-Field Equilibrium} (RA-MP-MFE) if the following two conditions hold: i) $\mu^{\star}=\B_\prop(\pi^{\star})\quad \text{for all}\, j\in[M]$; and ii) $\pi^{\star}\in\briskopt(\mu^{\star})$.
\end{definition}
\end{myframe}

Thus, \(\phi_{\RA}(\pi)=0\) iff
\((\pi,\mu^\pi)\) is an RA-MP-MFE.

\begin{myOCP}
\begin{theorem}[\highlight{Existence of equilibria (informal)}]\label{thm:existence:RA:MP:MFE}
Fix \(\mu_0^j\in\P(\X)\) for all \(j\in[M]\). 
Then, under Assumption~\ref{assmpt:lipschitiz-model}, for every fixed \(\eps \in [0,1]\), 
there exists a RA-MP-MFE \((\pi^\star,\mu^\star)\in\Pi\times\M\). 
\end{theorem}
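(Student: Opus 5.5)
We prove existence by applying Kakutani's fixed-point theorem on the occupation-measure space \(\Occ=\prod_{j}\Occ^j\) rather than on \(\Pi\times\M\). Each \(\Occ^j\) is cut out of a finite-dimensional Euclidean space by the linear constraints \eqref{occ:constraint:1}--\eqref{occ:constraint:2} together with nonnegativity. Summing the constraints shows that every \(\xi_t^j\) and \(\eta_t^j\) has total mass one. Hence \(\Occ^j\) is a nonempty (Remark~\ref{rem:policy:occ:equivalence}), compact, convex polytope, and so is \(\Occ\).

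\textbf{The correspondence.} Define \(\Gamma:\Occ\rightrightarrows\Occ\) by \(\Gamma(o)\Let\prod_{j=1}^M\argmax_{\tilde o^j\in\Occ^j}\Phi_j(\tilde o^j;o)\), with \(\Phi_j\) as in \eqref{eq:occ:phi}. We will show that a fixed point \(o^\star\in\Gamma(o^\star)\) yields an RA-MP-MFE. Take \(\mu^\star=\xi(o^\star)\) and extract \(\pi^\star\) through \eqref{eq:markov:policy:old}. Then \(\B_\prop(\pi^\star)=\mu^\star\), and the equivalence \(\bar{\objective}_j(\pi^j;\mu)=\Phi_j(o^j;o)\) (with \(\mu=\xi(o)\)) shows that \(\pi^{\star}\in\briskopt(\mu^\star)\).

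\textbf{Convexity of values.} For fixed \(o\), the map \(\tilde o^j\mapsto\Phi_j(\tilde o^j;o)\) is an infimum of functions that are linear in \(\tilde\xi^j\). It is therefore concave, and its argmax set over the convex set \(\Occ^j\) is convex. Nonemptiness of this set will follow from continuity below.

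\textbf{Closed graph.} By Berge's maximum theorem, it suffices that \((\tilde o^j,o)\mapsto\Phi_j(\tilde o^j;o)\) is jointly continuous. Write \(\Phi_j(\tilde o^j;o)=\inf_{\hat\mu^{-j}\in\ambset(\xi(o),\eps)}G(\tilde o^j,o,\hat\mu^{-j})\). The integrand \(G\) is continuous by Assumption~\ref{assmpt:lipschitiz-model}, and the ambiguity set is nonempty and compact by Proposition~\ref{prop:ambiguity:set}. Another application of Berge's theorem reduces joint continuity of \(\Phi_j\) to continuity of the correspondence \(\mu\mapsto\ambset(\mu,\eps)\) on \(\M\), in the Hausdorff sense.

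\textbf{Main obstacle: continuity of the ambiguity set.} Upper hemicontinuity is easy. The reachable-set maps \(\R_t^k\) have closed graph, since \(\R_t^k(\nu)=\{\nu F_t^k(\pi):\pi\in\Pi_t\}\) is the image of the compact set \(\Pi_t\) under a map jointly continuous in \((\nu,\pi)\). The constraint \(\dtv{\mu_t^k,\hat\mu_t^k}\le\eps\) is also closed in \((\mu,\hat\mu)\). Together these give a closed graph in a compact space.

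Lower hemicontinuity is the delicate step, because the total-variation ball may touch the dynamic constraint only at a boundary point when \(\eps\) is small. Our plan is to exploit the fact that the nominal flow \(\mu^k\) is itself dynamically feasible, so \(\mu^k\in\ambset\) always; this holds for every \(\mu\in\M\), which is why we work on \(\M\) and not on all of \(\P(\X)^{T+1}\). Given \(\mu^n\to\mu\) and \(\hat\mu\in\ambset(\mu,\eps)\), we will construct approximants by the convex combination
\begin{align*}
\hat\mu^{n}\Let(1-\lambda_n)\hat\mu'+\lambda_n\mu^{n},
\end{align*}
where \(\hat\mu'\) is realized by the same occupation-measure structure as \(\hat\mu\). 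Since reachable flows of population \(k\) are exactly \(\{\xi^k:o^k\in\Occ^k\}\), the set of such flows is convex, so \(\hat\mu^n\) is dynamically feasible. By convexity of the total-variation distance, \(\dtv{\mu_t^{n,k},\hat\mu_t^{n,k}}\le(1-\lambda_n)\dtv{\mu_t^{n,k},\hat\mu_t^{k}}\). Choosing \(\lambda_n\to0\) with \(\lambda_n\ge\|\mu^n-\mu\|/(\eps+\|\mu^n-\mu\|)\) keeps this bound at most \(\eps\), while \(\hat\mu^n\to\hat\mu\). This proves lower hemicontinuity for \(\eps>0\). The case \(\eps=0\) is trivial, because then \(\ambset(\mu,0)=\{\mu^{-j}\}\) depends continuously on \(\mu\).

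\textbf{Conclusion.} With \(\ambset\) continuous, \(\Phi_j\) is jointly continuous. Hence \(\Gamma\) has nonempty, convex, compact values and a closed graph on the compact convex set \(\Occ\). Kakutani's theorem then yields a fixed point \(o^\star\), which gives the desired RA-MP-MFE.
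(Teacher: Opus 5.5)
Your proposal is correct and follows the paper's proof essentially step for step. The shared steps are Kakutani on the occupation-measure polytope, concavity of $\Phi_j(\cdot;o)$ as an infimum of affine maps, and joint continuity via Berge. That continuity rests on upper hemicontinuity (closed graph into a compact set) and on lower hemicontinuity of $\mu\mapsto\mathcal{W}_j(\mu,\varepsilon)$, proved by the same convex interpolation $(1-\lambda_n)\hat\mu+\lambda_n\mu_n^{-j}$ toward the dynamically feasible nominal flow; policy reconstruction then finishes the argument. The differences are cosmetic: the paper takes $\lambda_n=1-\varepsilon/\Delta_n$ with $\Delta_n=\max_{k,t}d_{\mathrm{TV}}(\hat\mu_t^k,\mu_{n,t}^k)$ where you use an explicit triangle-inequality threshold, and your auxiliary $\hat\mu'$ should simply be $\hat\mu$ itself.
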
 
\end{myOCP}
A proof of Theorem~\ref {thm:existence:RA:MP:MFE} is given in Appendix \ref{sec:existence:normal:appen} and it relies on techniques from variational analysis and fixed-point theory \cite{ref:AliBor-06}.  
However, note that the operator \(\B_\prop\circ\briskopt:\M\to\M\) is generally \emph{not contractive} (this is usual, see \cite{cui2021approximately}); a statement and proof is in  Appendix \ref{sec:existence:normal:appen}. This naturally brings us to the next section.

% ===================================
% ===== Entropy Regularization ======
% ===================================

\section{Entropy Regularization}\label{subsec:Entropy:Regularization}

Non-contractivity implies that the risk-averse best-response dynamics may not support standard fixed-point iteration schemes.
This motivates the introduction of an \emph{entropy-regularized} variant of the risk-averse equilibrium. 
This concept models \emph{boundedly rational} decision-making \cite{mazumdar2025tractable} through an entropy reward {and also provides} a smoother surrogate. 
We first define the entropy-regularized risk-averse payoff.

%%%%%%%%%%%%%%%%%%%%
\begin{myframe}
\begin{definition}[\highlight{Entropy-regularized payoff}]\label{def:entropic:RA:payoff}
Fix  \(\mu\in \M\). 
For each population \(j\in[M]\), policy \(\pi^j\in\Pi^j\), \(\eps\in [0, 1]\), and entropy-regularization parameter \(\entparam>0\), the entropy-regularized risk-averse payoff of population \(j\), \(\bar{\objective}_{j,\entparam}(\pi^j;\mu)\), is defined by
\begin{align}
\bar{\objective}_{j,\entparam}(\pi^j;\mu) \Let
\inf_{\widehat\mu^{-j}\in\ambset(\mu,\eps)}
\E^{\pi^j}\left[\sum_{t=0}^{T} r_t^j\bigl(x_t^j,\mu_t^j,\widehat\mu_t^{-j}\bigr) -\entparam\sum_{t=0}^{T-1} \log \pi_t^j(u_t^j|x_t^j)
\right], \nn
\end{align}
where \(x_0^j\sim\mu_0^j\), \(u_t^j\sim\pi_t^j(\cdot|x_t^j)\), and \(x_{t+1}^j\sim f_t^j(\cdot|x_t^j,u_t^j)\). 
\end{definition}
\end{myframe}
%%%%%%%%%%%%%%%%%%%%

{The rationale behind introducing entropy regularization is that it gives a \emph{tractable} perturbed equilibrium notion, which we call the entropy-regularized RA-MP-MFE and define below.\footnote{{We further show in Appendix \ref{sec:existence:ent-reg:appen} that the entropy-regularized RA-MP-MFE (Definition \ref{def:entropic:RA:MP:MFE}) close to the original RA-MP-MFE (Definition~\ref{def:RA:MP:MFE}) when \(\entparam\) is small.}}}

%%%%%%%%%%%%%%%%%%%%
\begin{myframe}
\begin{definition}[\highlight{Entropy-regularized RA-MP-MFE}]\label{def:entropic:RA:MP:MFE}
A pair \((\pi^\star_\entparam,\mu^\star_\entparam)\in\Pi\times\M\), with \(\pi^\star_\entparam\Let(\pi^{1,\star}_\entparam,\ldots,\pi^{M,\star}_\entparam)\in\Pi\) and \(\mu^\star_\entparam\Let(\mu^{1,\star}_\entparam,\ldots,\mu^{M,\star}_\entparam)\in\M\), is an entropy-regularized RA-MP-MFE with \(\eps\in \lcrc{0}{1}\) and \(\entparam>0\) if the following two conditions hold:
\begin{enumerate}[label=\textup{(\ref{def:entropic:RA:MP:MFE}-\alph*)}, leftmargin=*, widest=b, align=left]
\item The mean-field flows are consistent with the \edit{corresponding population} policies, i.e.,
\begin{align}\label{entropic:risk:av:cond:1}
\mu^{j,\star}_\entparam=\B_\prop^j(\pi^{j,\star}_\entparam)\quad \text{for }\,j\in[M].
\end{align}
\item Each population is entropy-regularized risk-averse optimal, i.e.,
\begin{align}\label{entropic:risk:av:cond:2.1}
\bar{\objective}_{j,\entparam}(\pi^{j,\star}_\entparam;\mu^\star_\entparam)
\ge \bar{\objective}_{j,\entparam}(\pi^j;\mu^\star_\entparam)
\quad \text{for all }\pi^j\in\Pi^j,\ j\in[M],
\end{align}
\newedit{equivalently,}
\begin{align}\label{entropic:risk:av:cond:2.2}
\pi^{j,\star}_\entparam \in \argmax_{\pi^j\in\Pi^j}
\bar{\objective}_{j,\entparam}(\pi^j;\mu^\star_\entparam)\newedit{.}
\end{align}
\end{enumerate}
\end{definition}
\end{myframe}
%%%%%%%%%%%%%%%%%%%%
%
%%%%%%%%%%%%%%%%%%%%
\begin{myOCP}
\begin{theorem}[\highlight{Existence of Ent. Reg. equilibria (informal)}]\label{thm:existence:entropic:RA:MP:MFE}
Let Assumption~\ref{assmpt:lipschitiz-model} be satisfied. 
Then, for every fixed \(\eps\in \lcrc{0}{1}\) and \(\entparam>0\), there exists an entropy-regularized RA-MP-MFE \((\pi^\star_{\entparam},\mu^\star_{\entparam})\in\Pi\times\M\).
\end{theorem}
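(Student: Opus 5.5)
We will prove existence through a fixed point in occupation-measure space, as in Theorem~\ref{thm:existence:RA:MP:MFE}. The entropy term makes the best response single-valued and continuous, so Brouwer (or Kakutani) suffices.

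\textbf{Step 1: occupation-measure reformulation.} For \(\tilde o^j=(\tilde\xi^j,\tilde\eta^j)\in\Occ^j\), the entropy term in Definition~\ref{def:entropic:RA:payoff} does not depend on \(\widehat\mu^{-j}\). It can therefore be pulled outside the infimum. Using the correspondence of Remark~\ref{rem:policy:occ:equivalence}, we write
\begin{align*}
\Phi_{j,\entparam}(\tilde o^j;o)\Let \Phi_j(\tilde o^j;o)-\entparam\sum_{t=0}^{T-1}\sum_{x,u}\tilde\eta_t^j(x,u)\log\frac{\tilde\eta_t^j(x,u)}{\tilde\xi_t^j(x)},
\end{align*}
with the convention \(0\log 0=0\). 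The subtracted term equals minus the entropy, so it is concave: it is the negative of a sum of perspectives of the convex function \(s\mapsto s\log s\) on \(\Occ^j\). The first term \(\Phi_j(\cdot;o)\) is an infimum of functions that are linear in \(\tilde\xi^j\), hence concave. Thus \(\Phi_{j,\entparam}(\cdot;o)\) is concave on the convex compact polytope \(\Occ^j\). We may assume \(\Occ^j\) is a polytope, exactly as for the reachable sets in Proposition~\ref{prop:ambiguity:set}.

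\textbf{Step 2: continuity of the payoff in \(o\).} By Proposition~\ref{prop:ambiguity:set}, \(\ambset(\xi(o),\eps)\) is a nonempty compact polytope. We will show that the correspondence \(o\mapsto\ambset(\xi(o),\eps)\) is continuous, i.e.\ both upper and lower hemicontinuous. Upper hemicontinuity follows from the closed graph of the defining constraints together with compactness. Lower hemicontinuity is the main obstacle, since the TV-ball constraint is coupled with dynamic feasibility. The trick we plan to try is a convex combination. Given \(o_n\to o\) and \(\widehat\mu\in\ambset(\xi(o),\eps)\), set \(\widehat\mu_n\Let(1-\lambda_n)\widehat\mu+\lambda_n\,\xi^{-j}(o_n)\) with \(\lambda_n\to0\). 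This combination is dynamically feasible, because flows generated by \(\Occ^k\) form a convex set by linearity of \eqref{occ:constraint:1}--\eqref{occ:constraint:2}. The TV distance is also convex, which should give the required \(\eps\)-bound. The case \(\eps=0\) is trivial, since then \(\widehat\mu=\xi^{-j}(o_n)\). If the triangle-inequality estimate is too weak for \(\eps>0\), we would instead choose \(\lambda_n\) proportional to \(\|\xi(o_n)-\xi(o)\|_1/\eps\).

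Given this continuity, Assumption~\ref{assmpt:lipschitiz-model} gives joint continuity of the integrand. Berge's maximum theorem \cite{ref:AliBor-06} then yields joint continuity of \(\Phi_j\), and hence of \(\Phi_{j,\entparam}\), on \(\Occ^j\times\Occ\).

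\textbf{Step 3: single-valued best response and fixed point.} We will show that \(\Phi_{j,\entparam}(\cdot;o)\) is strictly concave in the policy variable. The argument works backward in time. At each step \(t\), for fixed downstream quantities, the objective restricted to \(\pi_t^j(\cdot|x)\) is a linear function plus \(\entparam\) times the entropy, and this has a unique softmax maximizer. On reachable states this pins down a unique maximizing occupation measure \(\tilde o^{j}_\star(o)\). On states with \(\xi_t^j(x)=0\) we fix the softmax choice, which yields a unique policy. If the infimum spoils the backward recursion, we rely instead on strict concavity of the entropy in \(\tilde\eta^j\) on the face where \(\tilde\xi^j\) is determined.

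With uniqueness, Berge's theorem makes \(o\mapsto(\tilde o^{1}_\star(o),\ldots,\tilde o^{M}_\star(o))\) a continuous self-map of the compact convex set \(\Occ\). Brouwer's theorem gives a fixed point \(o^\star\); Kakutani applies directly if uniqueness is not established. Finally, we recover \(\pi^\star_\entparam\) via \eqref{eq:markov:policy:old}, using the softmax choice off-support, and set \(\mu^\star_\entparam=\xi(o^\star)=\B_\prop(\pi^\star_\entparam)\). This pair satisfies \eqref{entropic:risk:av:cond:1} and \eqref{entropic:risk:av:cond:2.2}.
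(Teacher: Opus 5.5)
Your proposal, taken with its Kakutani fallback, follows the paper's proof. The ingredients are the same: the occupation-measure payoff $\Phi_{j,\beta}=\Phi_j+\Psi_{j,\beta}$ on $\Occ^j\times\Occ$; concavity from the infimum of linear functions plus the perspective argument; continuity of $o\mapsto\ambset(\xi(o),\eps)$, with upper hemicontinuity from closed graph and compactness and lower hemicontinuity from the same convex-combination interpolation; then Berge, Kakutani, and policy reconstruction. Your choice of $\lambda_n$ works. For instance, $\lambda_n=\min\{1,\delta_n/\eps\}$, with $\delta_n$ the largest TV shift of the nominal opponent marginals, gives $(1-\lambda_n)(\eps+\delta_n)\le\eps$.

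Two points need fixing.

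\textbf{Continuity of the entropy term.} Your step ``hence of $\Phi_{j,\beta}$'' skips exactly the new regularity fact this proof has to supply: continuity of $\Psi_{j,\beta}$ on all of $\Occ^j$. The delicate points are where $\tilde\xi_t^j(x)\to0$, since the integrand $\tilde\eta\log(\tilde\eta/\tilde\xi)$ is a perspective evaluated at the origin. You need this continuity both for Weierstrass (existence of a maximizer) and for Berge (upper hemicontinuity of the best response). It follows from $\tilde\eta_t^j(x,u)\le\tilde\xi_t^j(x)$, which gives
$0\le-\sum_{u}\tilde\eta_t^j(x,u)\log\bigl(\tilde\eta_t^j(x,u)/\tilde\xi_t^j(x)\bigr)\le\tilde\xi_t^j(x)\log|\U|$.

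\textbf{Uniqueness via backward recursion.} The argument in Step 3 does not go through. The infimum runs over entire opponent trajectories in a dynamically coupled set. So $\Phi_j(\cdot;o)$ is a concave function of the whole flow $\tilde\xi^j$ that does not split stage by stage, and there is no per-stage softmax characterization of the maximizer. Your alternative (``strict concavity on the face where $\tilde\xi^j$ is determined'') is too vague to stand on its own. The per-$(t,x)$ perspective is affine along rays, so strictness only comes from the temporal chain structure. Uniqueness does hold: the paper gets it from $\beta$-strong concavity in Lemma~\ref{lem:strong:concavity:entreg}, by identifying the Bregman divergence with a path-space KL and applying Pinsker. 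Existence does not need it, though. Drop the Brouwer route and use Kakutani. You already have nonempty compact convex values and upper hemicontinuity once the continuity of $\Psi_{j,\beta}$ above is checked.
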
    
\end{myOCP}

A proof of Theorem \ref{thm:existence:entropic:RA:MP:MFE} is given in the Appendix \ref{sec:existence:ent-reg:appen}.

Similar to \(\briskopt\), we define \(\briskoptbeta:\M\to\Pi\)  as the entropy-regularized risk-averse best-response correspondence, i.e., for each \(\mu\in\M\), \((\pi^1,\ldots,\pi^M)\in\briskoptbeta(\mu)\), where, for every \(j\in[M]\), $\pi^j \in \argmax_{\pi^j\in\Pi^j}\bar{\objective}_{j,\entparam}(\pi^j;\mu)$.
Further, we define the composite operator $\Gamma_\beta \Let \B_\prop\circ\briskoptbeta:\M\to\M$.
%

% ====================================
% ======= contractivity ==============
% ====================================

\subsection{Steps towards contractivity}\label{subsec:ent:reg:contractivity}
\black{We are now ready to prove several key results on the contractivity of best response mappings.} 
We provide two separate contractivity results depending on the value of \(\eps\).

\begin{itemize}[leftmargin=*, label = \(\circ\)]
    \item \(\eps=1\); in this case, the ambiguity set is fixed. Under a particular structure on the reward, contractivity follows. This result is reported in Theorem \ref{thm:contractivity:regularized:BR} with the detailed proof given in Appendix \ref{sec:contraction:ent-reg:appen}. 

    \item \(\eps \in (0, 1)\); for this \newedit{case,} and under additional assumptions on the reward, contractivity follows if we further regularize the \edit{inner $\inf$ problem (with respect to the adversarial opponent mean-field flows) in~\eqref{def:entropic:RA:MP:MFE}}.  This result is reported in Theorem \ref{thm:contractivity:inner:regularized:BR} with a detailed proof given in Appendix \ref{sec:contraction:ent-reg:appen}.. 

\end{itemize}

\begin{assumption}\label{assum:eps:one:case}
For every \(j\in[M]\) and \(t\in \aset[]{0,\ldots,T}\), assume that the reward admits the decomposition
\begin{align}\label{eq:reward:decomp}
\bigl(x,\mu^j,\nu^{-j}\bigr) \mapsto r_t^j(x,\mu^j,\nu^{-j}) \Let a_t^j(x,\nu^{-j})+b_t^j(x,\mu^j),
\end{align}
where \(\sup_{t,x,\nu^{-j}}\bigl|a_t^j(x,\nu^{-j})\bigr|<+\infty.\) Moreover, there exists \(\ell_{j,t}\ge 0\) such that \(\|b_t^j(\cdot,\mu^j)-b_t^j(\cdot,\bar\mu^j)\|_{\infty}\le \ell_{j,t}\|\mu^j-\bar\mu^j\|_1\) for all \(\mu^j,\bar\mu^j\in\P(\X)\) with \(L_j \Let \sum_{t=1}^{T}\ell_{j,t}\). 
\end{assumption}

We define the occupation norm under which we show contraction. Let \(o^j=(\xi^j,\eta^j)\), \(\bar o^j=(\bar{\xi}^j,\bar{\eta}^j)\), \(u_t^j \Let \|\xi_t^j-\bar{\xi}_t^j\|_1\), and \(v_t^j \Let \|\eta_t^j-\bar{\eta}_t^j\|_1\), then
\begin{align}\label{eq:occupation:measure:norm}
\|o^j-\bar{o}^j\|_{\onorm} \Let \max \aset[\Big]{\max_{0\le t \le  T}u_t^j,\,\max_{0 \le t<T}v_t^j}
\end{align}
and \(\|o-\bar o\|_{\onorm,\infty}\Let\max_{j\in[M]}\|o^j-\bar o^j\|_{\onorm}\). Further, for \(\entparam>0\), we also define
\begin{align}
\Psi_{j,\entparam}({o}^j)
\Let -\entparam\sum_{t=0}^{T-1}\sum_{x\in\X}\sum_{u\in\U}
{\eta}_t^j(x,u) \log\frac{{\eta}_t^j(x,u)}{{\xi}_t^j(x)}. \nn
\end{align}
and \(\Phi_{j,\entparam}(\tilde{o}^j;o) = \Phi_j(\tilde{o}^j;o)+\Psi_{j,\entparam}(\tilde{o}^j)\), where \(\Phi_j(\tilde{o}^j;o)\) is defined in \eqref{eq:occ:phi}.
%
%%%%%%%%%%%%%%%%%%%%
\begin{myOCP}
 \begin{theorem}[\highlight{Contractivity, \(\eps=1\) (informal)}]
\label{thm:contractivity:regularized:BR}
Let Assumption \ref{assum:eps:one:case} hold. \edit{Fix the parameter \(\entparam >0\).} 
Define \(\bestresp_{\entparam}(o)  \Let \left(\bestresp_{1,\entparam}(o),\ldots,\bestresp_{M,\entparam}(o)\right)\) where \(\bestresp_{j,\entparam}(o)\Let\argmax_{\widetilde{o}^j\in\Occ^j}\Phi_{j,\entparam}(\widetilde o^j;o)\). 
If \(\entparam>\max_{j\in[M]} L_j,\) then \(\bestresp_{\entparam}\) is a contraction on \(\Occ\) in \(\|\cdot\|_{\onorm,\infty}\). Consequently, the recursion $\mu(n+1) = \Gamma_\beta(\mu(n))$ converges to the unique entropy-regularized RA-MP-MFE.
\end{theorem}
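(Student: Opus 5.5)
The plan is to exploit the fact that for \(\eps=1\) the ambiguity set no longer depends on the nominal flow. Since every \(\hat\mu_t^k\in\P(\X)\) satisfies \(\dtv{\mu_t^k,\hat\mu_t^k}\le 1\), we get \(\ambset(\mu,1)=\{\hat\mu^{-j}:\hat\mu_0^k=\mu_0^k,\ \hat\mu_{t+1}^k\in\R_t^k(\hat\mu_t^k)\}=:\mathcal{W}_j^\circ\), independent of \(\mu\). Substituting the decomposition \eqref{eq:reward:decomp} into \eqref{eq:occ:phi}, the \(b\)-part does not involve \(\hat\mu^{-j}\) and comes out of the infimum. This gives
\begin{align*}
\Phi_{j,\entparam}(\tilde o^j;o)=A_j(\tilde\xi^j)+\sum_{t=0}^{T}\inprod{\tilde\xi_t^j}{b_t^j(\cdot,\xi_t^j)}+\Psi_{j,\entparam}(\tilde o^j),
\end{align*}
where \(A_j(\tilde\xi^j)\Let\inf_{\hat\mu^{-j}\in\mathcal{W}_j^\circ}\sum_{t,x}\tilde\xi_t^j(x)a_t^j(x,\hat\mu_t^{-j})\). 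The function \(A_j\) is finite because \(a^j\) is bounded, concave as an infimum of linear functions, and does not depend on \(o\) at all. Hence \(o\) enters only through the linear tilt \(c_t(o)\Let b_t^j(\cdot,\xi_t^j)\), and only through population \(j\)'s own flow. At \(t=0\) the tilt is \(o\)-independent since \(\xi_0^j=\mu_0^j\), which is why \(L_j\) sums from \(t=1\).

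The key step is a strong concavity estimate for \(\Psi_{j,\entparam}\) on the polytope \(\Occ^j\) in the norm \(\|\cdot\|_{\onorm}\). I would compute the Bregman divergence of \(-\Psi_{j,\entparam}/\entparam\) between \(o^j\) and \(o'^j\) and show that it equals \(\sum_t\sum_x\eta_t^j(x,\cdot)\,\mathrm{KL}\bigl(\pi_t(\cdot|x)\,\|\,\pi'_t(\cdot|x)\bigr)\), using the policies from Remark~\ref{rem:policy:occ:equivalence}. This is exactly the KL divergence between the induced path laws. Pinsker's inequality then bounds it below by \(\tfrac12\|\text{path-law difference}\|_1^2\). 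Because marginalisation does not increase \(\ell^1\) distance, the same quantity is at least \(\tfrac12\max_t\|\eta_t-\eta'_t\|_1^2\) and at least \(\tfrac12\max_t\|\xi_t-\xi'_t\|_1^2\), that is, at least \(\tfrac12\|o^j-o'^j\|_{\onorm}^2\). Summing the two symmetric Bregman divergences yields the monotonicity inequality
\begin{align*}
\inprod{g-g'}{o^j-o'^j}\le-\entparam\|o^j-o'^j\|_{\onorm}^2
\end{align*}
for supergradients \(g\) of \(\Psi_{j,\entparam}\). Adding the concave \(A_j\) preserves this inequality. As a consequence the maximiser \(\bestresp_{j,\entparam}(o)\) exists and is unique, so the \(\argmax\) in the statement is single-valued.

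For the contraction, let \(\tilde o=\bestresp_{j,\entparam}(o)\) and \(\tilde o'=\bestresp_{j,\entparam}(\bar o)\). I would write the first-order optimality variational inequalities over \(\Occ^j\) for both maximisers, add them, and apply the monotonicity inequality. This gives
\begin{align*}
\entparam\|\tilde o-\tilde o'\|_{\onorm}^2\le\sum_{t\ge1}\inprod{\tilde\xi_t-\tilde\xi'_t}{c_t(o)-c_t(\bar o)}\le\sum_{t\ge1}\ell_{j,t}\|\xi_t^j-\bar\xi_t^j\|_1\,\|\tilde\xi_t-\tilde\xi'_t\|_1,
\end{align*}
and the right-hand side is at most \(L_j\|o-\bar o\|_{\onorm,\infty}\|\tilde o-\tilde o'\|_{\onorm}\). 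Dividing through, and then taking the maximum over \(j\), gives a Lipschitz constant \(\max_j L_j/\entparam<1\). Banach's fixed point theorem on the compact set \(\Occ\) then yields a unique fixed point \(o^\star\) and convergence of the iterates \(o(n+1)=\bestresp_\entparam(o(n))\).

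To transfer this to \(\Gamma_\entparam\), I would use Remark~\ref{rem:policy:occ:equivalence} together with the identity \(\bar\objective_{j,\entparam}(\pi^j;\mu)=\Phi_{j,\entparam}(o^j(\pi^j);o)\), where \(\xi(o)=\mu\). This shows that \(\xi\circ\bestresp_\entparam\) agrees with \(\Gamma_\entparam\) on flows, so \(\mu(n)=\xi(o(n))\) converges to \(\xi(o^\star)\). The fixed-point pair is exactly an entropy-regularized RA-MP-MFE, and it is unique by the uniqueness of \(o^\star\). I expect the main obstacle to be the strong concavity step: the entropy is not differentiable on the boundary of \(\Occ^j\), and \(A_j\) is nonsmooth. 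To handle this, I would first show that the maximiser satisfies \(\tilde\eta_t(x,u)>0\) whenever \(\tilde\xi_t(x)>0\), since the entropy has infinite slope at zero. Then I would work with directional derivatives and the Bregman identity along segments, rather than with gradients.
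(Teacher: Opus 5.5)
Your proposal is correct and follows essentially the paper's argument. The fixed ambiguity set for \(\eps=1\) gives the split of \(\Phi_j\) into an \(o\)-independent concave part plus the linear \(b\)-tilt, and strong concavity of \(\Psi_{j,\entparam}\) in \(\|\cdot\|_{\onorm}\) comes from the Bregman--path-KL identity with Pinsker and marginalisation. (The weight in that identity should be \(\xi_t^j(x)\), not \(\eta_t^j(x,\cdot)\).) Adding the two optimality conditions leaves only the \(b\)-cross term, which is bounded via \(L_j\), and the proof concludes with Banach and \(\xi\circ\bestresp_\entparam=\Gamma_\entparam\circ\xi\).

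The one difference is that you combine first-order variational inequalities. The paper instead extends the zero-order strong-concavity inequality to all of \(\Occ^j\) by mixing with a relative-interior point and using continuity. It then works only with the quadratic growth \(\Phi_{j,\entparam}(z^j;o)-\Phi_{j,\entparam}(v^j;o)\ge\frac{\entparam}{2}\|z^j-v^j\|_{\onorm}^2\) at the maximiser. As a result, the boundary and nonsmoothness issues you flag as the main obstacle never arise.
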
   
\end{myOCP}

We now furnish our final technical result, a contraction for \(\eps \in (0, 1)\). We \newedit{first} need the following assumption.

\begin{assumption}\label{assum:eps:0:1}
Consider the decomposition~\eqref{eq:reward:decomp}. 
We assume that there exist \(c_t^j:\X\to\Real\) and \(A_t^{j,k}:\X\times\X\to\Real\), \(k\neq j\), such that for all \(x\in\X\) and \(\nu^{-j}\in\P(\X)^{M-1}\), 
\begin{align}\label{eq:affine:opponent:reward}
a_t^j(x,\nu^{-j})=c_t^j(x)+\sum_{k\neq j}\sum_{y\in\X}A_t^{j,k}(x,y)\nu^k(y).
\end{align}
% for all \(x\in\X\) and \(\nu^{-j}\in\P(\X)^{M-1}\). 
\end{assumption}

We also define the quantities which will be used later, in particular, \(\ell_{j,t}^a\Let\sum_{k\neq j}\max_{x\in\X} \|A_t^{j,k}(x,\cdot)\|_\infty\) and 
\(L_j^a\Let\sum_{t=1}^T\ell_{j,t}^a.\)

\begin{myOCP}
 \begin{theorem}[\highlight{Contractivity, \(\eps \in (0,1)\) (informal)}]
\label{thm:contractivity:inner:regularized:BR}
Suppose that Assumptions \ref{assum:eps:one:case} and \ref{assum:eps:0:1} hold.
For \(\widehat\mu^{-j}=\{\widehat\mu_t^k\}_{k\neq j,\;t=0}^T\), let \(\left\|\widehat\mu^{-j}\right\|_2^2\Let\sum_{k\neq j}\sum_{t=0}^T\sum_{x\in\X}\left|\widehat\mu_t^k(x)\right|^2\); define the inner-regularized risk-averse payoff as
\begin{align}\label{eq:inner:reg}
&\Phi_{j,\entparam}^{\sigma}(\widetilde{o}^j;o) \Let
\min_{\widehat\mu^{-j}\in\ambset(\xi(o),\eps)}
\Bigg{[} \sum_{t=0}^T\sum_{x\in\X}\widetilde \xi_t^j(x) \cdot \nn \\& r_t^j\bigl(x,\xi_t^j,\widehat\mu_t^{-j}\bigr) +\frac{\sigma}{2}\left\|\widehat\mu^{-j}\right\|_2^2 \Bigg{]} +\Psi_{j,\entparam}(\widetilde{o}^j). 
\end{align}
Define \(\bestresp_{j,\entparam}^{\sigma}(o) \Let \argmax_{\widetilde{o}^j\in\occset^j}\Phi_{j,\entparam}^{\sigma}(\widetilde{o}^j;o),\) and the concatenated qunatity \(\bestresp_{\entparam}^{\sigma}(o)
\Let \bigl(\bestresp_{1,\entparam}^{\sigma}(o),\ldots,\bestresp_{M,\entparam}^{\sigma}(o)\bigr).\)
\edit{There exist constants \(\Lambda_{j,\eps}\ge0\) such that, for \(\entparam>\max_{j\in[M]}(L_j+2L_j^a\Lambda_{j,\eps})\), \(\bestresp_{\entparam}^{\sigma}\) is a contraction on \(\Occ\) in \(\|\cdot\|_{\onorm,\infty}\), uniformly in \(\sigma>0\).
As \(\sigma\to0^+\), its fixed point converges to the unique fixed point of \(\bestresp_{\entparam}\). Moreover, the recursion  $\mu(n+1) = \Gamma_\beta(\mu(n))$ converges  to the unique entropy-regularized RA-MP-MFE.} %for every \(\mu_0\in\P(\X)^M\).}
\end{theorem}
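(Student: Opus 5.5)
The plan is to write $\bestresp_{\entparam}^{\sigma}$ as the composition of two maps: an inner adversary map $o\mapsto\widehat\mu^{-j,\sigma}(o)$ and an entropy-regularized best response to the resulting effective reward. Then I would reuse the contraction mechanism behind Theorem~\ref{thm:contractivity:regularized:BR}, adding the Lipschitz dependence of the inner minimizer on $o$. Throughout, $\sigma>0$ is fixed and all estimates are tracked uniformly in $\sigma$.

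\textbf{Step 1: decomposition.} Under Assumptions~\ref{assum:eps:one:case} and~\ref{assum:eps:0:1}, the inner objective in \eqref{eq:inner:reg} equals
\begin{align*}
\sum_{t,x}\widetilde\xi_t^j(x)\Bigl(c_t^j(x)+b_t^j(x,\xi_t^j)\Bigr)
+\sum_{t,x}\widetilde\xi_t^j(x)\sum_{k\neq j}\sum_{y}A_t^{j,k}(x,y)\widehat\mu_t^k(y)
+\frac{\sigma}{2}\bigl\|\widehat\mu^{-j}\bigr\|_2^2 .
\end{align*}
This expression is affine in $\widetilde\xi^j$ and strongly convex in $\widehat\mu^{-j}$. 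The feasible set $\ambset(\xi(o),\eps)$ is a nonempty compact polytope by Proposition~\ref{prop:ambiguity:set}. Hence the minimizer $\widehat\mu^{-j,\sigma}(\widetilde o^j;o)$ is unique.

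\textbf{Step 2: the max over $\widetilde o^j$.} The function $\widetilde o^j\mapsto\Phi_{j,\entparam}^{\sigma}$ is a pointwise minimum of affine functions plus the strictly concave conditional entropy $\Psi_{j,\entparam}$. It is therefore strictly concave, and the maximizer is unique. I would swap max and min using Sion's theorem: $\Occ^j$ is a compact convex polytope, and the objective is concave in $\widetilde o^j$ and convex in $\widehat\mu^{-j}$. This gives a saddle point $(\widetilde o^{j,\star},\widehat\mu^{-j,\star})$.

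At the saddle point, $\widetilde o^{j,\star}$ is the soft-max (entropy-regularized DP) best response to the effective reward
\[
\tilde r_t^j(x;o)=c_t^j(x)+b_t^j(x,\xi_t^j)+\textstyle\sum_{k\neq j}\sum_y A_t^{j,k}(x,y)\widehat\mu_t^{k,\star}(y).
\]
From the $\eps=1$ analysis I take the standard soft-DP estimate, which is the core of the proof of Theorem~\ref{thm:contractivity:regularized:BR}. For any two rewards $\tilde r,\tilde r'$, the induced occupation measures satisfy
\[
\|o^j-o'^j\|_{\onorm}\le \entparam^{-1}\textstyle\sum_{t}\|\tilde r_t-\tilde r'_t\|_\infty .
\]
The $b$-part contributes at most $L_j\|o-\bar o\|_{\onorm,\infty}$ to the right-hand side. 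The $A$-part contributes at most $\sum_t\ell^a_{j,t}\max_k\|\widehat\mu_t^{k,\star}(o)-\widehat\mu_t^{k,\star}(\bar o)\|_1$.

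\textbf{Step 3 (main obstacle): Lipschitz stability of the adversary.} I need a bound of the form
\[
\|\widehat\mu_t^{k,\star}(o)-\widehat\mu_t^{k,\star}(\bar o)\|_1\le 2\Lambda_{j,\eps}\|o-\bar o\|_{\onorm,\infty}
\]
with $\Lambda_{j,\eps}$ independent of $\sigma$. Strong convexity alone yields constants of order $1/\sigma$, so it is the wrong tool here.

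Instead I would use polyhedral set-valued analysis. The ambiguity set is $\{\widehat\mu:\ G\widehat\mu\le h(\xi(o))\}$, a polytope whose constraint matrix $G$ is fixed. Only the right-hand side $h$ varies, and it does so affinely through the TV constraints $\dtv{\xi_t^k,\widehat\mu_t^k}\le\eps$. Hoffman's lemma (equivalently, Walkup--Wets Lipschitz continuity of $b\mapsto\{x:Gx\le b\}$ in the Hausdorff metric) then gives
\[
d_H\bigl(\ambset(\xi(o),\eps),\ambset(\xi(\bar o),\eps)\bigr)\le \Lambda_{j,\eps}\|o-\bar o\|_{\onorm,\infty}.
\]
For $\eps\in(0,1)$ the set has nonempty relative interior, since the nominal flow lies inside it. So the Hoffman constant $\Lambda_{j,\eps}$ is finite. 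To transfer Hausdorff stability of the sets to the selected minimizer uniformly in $\sigma$, I would argue through the saddle value rather than the argmin: the saddle value is Lipschitz with constant $L^a_j\Lambda_{j,\eps}$. The factor $2$ arises from comparing the two saddle inequalities, and together with Danskin's theorem this bounds the effective-reward perturbation directly. This step is the one I expect to need the most care.

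\textbf{Step 4: conclusion.} Combining Steps 2 and 3 gives
\[
\|\bestresp_{\entparam}^{\sigma}(o)-\bestresp_{\entparam}^{\sigma}(\bar o)\|_{\onorm,\infty}\le \max_j\frac{L_j+2L_j^a\Lambda_{j,\eps}}{\entparam}\,\|o-\bar o\|_{\onorm,\infty}.
\]
This is a contraction, uniformly in $\sigma$. Banach's theorem on the complete set $\Occ$ gives unique fixed points $o^\sigma$.

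As $\sigma\to0^+$, the inner-regularized objective converges uniformly to $\Phi_{j,\entparam}$, with error at most $\tfrac{\sigma}{2}\cdot\mathrm{const}$. By Berge's maximum theorem and uniqueness, $\bestresp_{\entparam}^{\sigma}\to\bestresp_{\entparam}$ pointwise. The contraction constant survives in the limit, so $\bestresp_{\entparam}$ is itself a contraction. The uniform contraction estimate then gives
\[
\|o^\sigma-o^0\|\le(1-\kappa)^{-1}\|\bestresp_{\entparam}^{\sigma}(o^0)-\bestresp_{\entparam}(o^0)\|\to0,
\]
where $\kappa<1$ is the uniform contraction constant from the previous display and $o^0$ is the unique fixed point of $\bestresp_{\entparam}$.

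Finally, by Remark~\ref{rem:policy:occ:equivalence}, iterating $\Gamma_\beta$ on flows corresponds to iterating $\bestresp_{\entparam}$ on occupation measures, projected onto $\xi$. Hence $\mu(n+1)=\Gamma_\beta(\mu(n))$ converges to the unique entropy-regularized RA-MP-MFE of Theorem~\ref{thm:existence:entropic:RA:MP:MFE}.
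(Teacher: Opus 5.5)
There is a genuine gap in Step~3, which is exactly where the $\sigma$-uniformity has to be earned, and Step~2 sets it up in a way that cannot work as stated.

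\textbf{The missing estimate.} Write $y(\widetilde o^j;o)$ for the inner minimizer $\widehat\mu^{-j,\star}_\sigma(\widetilde o^j;o)$. Because Step~2 passes to $\sum_t\|\tilde r_t(o)-\tilde r_t(\bar o)\|_\infty$, you need a $\sigma$-free Lipschitz bound on the \emph{saddle} adversaries. These are $y(z;o)$ and $y(\bar z;\bar o)$ with $z\neq\bar z$, so the candidate changes as well as the nominal. Since $y(\widetilde o^j;o)$ is the Euclidean projection of $-c_j(\widetilde\xi^j)/\sigma$ onto $\mathcal W_j(\xi(o),\varepsilon)$, its sensitivity to the candidate can be of order $1/\sigma$. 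Nothing in your argument controls this.

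\textbf{Why your tools do not supply it.} Hoffman/Walkup--Wets gives Hausdorff-Lipschitz continuity of $o\mapsto\mathcal W_j(\xi(o),\varepsilon)$. That alone does not make projections of far-away points Lipschitz: tilting a face by $\delta$ can move the projection of a point at distance of order $1/\delta$ by $O(1)$. Going ``through the value'' fails too. With $\Delta_j(\widetilde o^j)=\Phi^\sigma_{j,\beta}(\widetilde o^j;\bar o)-\Phi^\sigma_{j,\beta}(\widetilde o^j;o)$, the two quadratic-growth inequalities give $\beta\|z-\bar z\|^2\le\Delta_j(\bar z)-\Delta_j(z)$. A Lipschitz value only bounds the right side by $C\|o-\bar o\|$, not by $C\|o-\bar o\|\,\|z-\bar z\|$. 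This yields only $\|z-\bar z\|=O(\|o-\bar o\|^{1/2})$, which is not a contraction. The ``factor $2$ from comparing the two saddle inequalities'' is also not derived; in the paper it is simply $\|\cdot\|_1=2d_{\mathrm{TV}}$.

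\textbf{What is needed.} Two ingredients are missing.
\begin{enumerate}
\item A $\sigma$-free stability lemma for the projection, which the paper proves. Use the fixed-matrix representation $\{\nu: E_j\nu+F_j\mu=e_j,\ C_j\nu+G_j\mu\le d_j\}$. Then $P_j(v,\mu)$ is piecewise affine in $\mu$ over finitely many active sets, with slopes $-H_I^\dagger R_I$. Hence it is Lipschitz in $\mu$ with a constant independent of $v$. Because $v=-c_j(\widetilde\xi^j)/\sigma$ is unchanged when only $o$ moves, this absorbs the $1/\sigma$.
\item A device that only compares inner minimizers at a \emph{common} candidate. The paper does this with Danskin's formula for the $\widetilde\xi^j_t$-gradient and a Lipschitz bound on $\Delta_j$ along segments.
\end{enumerate}

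\textbf{How to repair your route.} Keep the inner product $\langle z-\bar z,\mathcal A(y(z;o)-y(\bar z;\bar o))\rangle$ rather than sup norms, where $\mathcal A$ is the opponent coupling of Assumption~\ref{assum:eps:0:1}. Split
$y(z;o)-y(\bar z;\bar o)=[y(z;o)-y(z;\bar o)]+[y(z;\bar o)-y(\bar z;\bar o)]$.
Adding the two variational inequalities over the common set $\mathcal W_j(\xi(\bar o),\varepsilon)$ shows the second piece contributes at most $-\sigma\|y(z;\bar o)-y(\bar z;\bar o)\|_2^2\le 0$. The first piece is bounded by the projection lemma. This recovers exactly the constant $L_j+2L_j^a\Lambda_{j,\varepsilon}$. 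Steps~1, 2 (apart from the sup-norm passage) and~4 are fine.
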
   
\end{myOCP}

\section{Fictitious Play}\label{sec:fic:play}
Fictitious play (FP) has been a long-standing algorithm used to compute equilibria for mean-field games with and without entropy regularization~\cite{ref:Fip:Play:brown1951iterative, perrin2020fictitiousplaymeanfield, eich2025boundedrationalityequilibriumlearning,ref:BERGER2007572}.
In this work, we present an extended version of the algorithm to multiple populations under risk aversion, thereby utiziling it as an effective algorithm for computing robust policies.
In particular, for each population \(j\in[M]\), each time step \(t=0,\ldots,T\), and iteration index \(\ell\), the algorithm iteratively computes a best response $\bar \pi^{j, \ell}_t$ to the population distribution induced by the average of the best responses from previous iterations.
Specifically, for each population \(j\in[M]\), each time step \(t=0,\ldots,T\), and iteration index \(\ell\), we have~\cite{perrin2020fictitiousplaymeanfield}:
\begin{equation}
\label{eq:discrete-bar-mu}
\bar \mu^{j, \ell}_t= \frac{\ell-1}{\ell}\bar \mu^{j, \ell-1}_t + \frac{1}{\ell}\mu^{\pi^{j, \ell}}_t\newedit{.}
\end{equation}
The policy generating this average distribution \newedit{is~\cite{perrin2020fictitiousplaymeanfield}}:
\begin{equation}
\label{eq:discrete-bar-pi}
   \bar \pi^{j, \ell}_t(u|x) = \frac{\sum _{i=0}^\ell \mu^{\pi^{j, i}}_t(x) \pi^{j, i}_t(u|x)}{\sum_{i=0}^\ell \mu^{\pi^{j,i}}_t(x)},
\end{equation}
\edit{where $\bar \pi^{j, \ell}_t(u|x)$ is the average of the best responses of population $j$ from iteration $0$ upto iteration $\ell$.} More details are given in Appendix \ref{sec:fic-play:conv:appen}. 

Similar to~\cite{eich2025boundedrationalityequilibriumlearning} we present the proof of the algorithm for continuous time iterations and the implemented discrete time version can be seen as a numerical approximation.
The continuous time analysis enables us to deploy standard tools from calculus and control theory, in particular, continuous-time Lyapunov functions.
We denote the fictitious play time by $s$, so as to explicitly differentiate it from the MP-MFG time horizon.
We make the following standard assumptions on the reward structure \edit{defined in~\eqref{eq:reward:decomp}~\cite{perrin2020fictitiousplaymeanfield, perolat2021scalingmeanfieldgames, eich2025boundedrationalityequilibriumlearning}}:

\begin{assumption}\label{assum:eps:one:monotone:reward}
Let \(\mathcal{V} \subset \Real[|\X|]\) be an open set containing \(\P(\X)\).
Under Assumption~\ref{assum:eps:one:case}, for every \(j\in[M]\), \(t\in\{0,\ldots,T\}\), and \(x\in\X\), $b_t^j(x,\cdot)$ is continuously differentiable on \(\mathcal{V}\) 
and \(\nu \mapsto b_t^j(\cdot,\nu)\) is monotone, i.e., $\sum_{x\in\X}(\nu(x) -\nu'(x)) \bigl( b_t^j(x,\nu)-b_t^j(x,\nu' \bigr) \le 0$ for all \(\nu, \nu' \in \P(\X)\).
\end{assumption}

%%%%%%% rem %%%%%%%%%%%

\begin{remark}
Observe that Assumption~\ref{assum:eps:one:monotone:reward} imposes a monotonicity condition on the \emph{own-population} component \(b_t^{j}\) of the reward function and \emph{does not} restrict the reward component depending on the opponent-dependent populations \(a^j_t\).
It is therefore weaker than requiring monotonicity of the full multi-population reward with respect to the entire mean-field profile, \edit{which is the case in existing literature~\cite{perrin2020fictitiousplaymeanfield, perolat2021scalingmeanfieldgames, eich2025boundedrationalityequilibriumlearning}}.
\end{remark}

Recall the definition of risk-averse exploitibility in Definition \ref{def-ra-explt}. Then, the following result holds.
%%%%%%% fic play convergence %%%%%%%%%
\begin{myOCP}
\begin{theorem}[\highlight{Convergence of FP (informal)}]
\label{thm:fp_FH}
Let the RA-MP-MFG satisfy Assumptions~\ref{assmpt:lipschitiz-model} and \ref{assum:eps:one:monotone:reward}, and let $\eps=1$ in~\eqref{eq:ambiguity:set}. 
Then, the exploitability of the continuous-time fictitious play process is a \emph{strong Lyapunov function} of the system, i.e., $\frac{\dd}{\dd s} \phi^j_{\RA}(\bar\pi^s) \le - \frac{1}{s} \phi^j_{\RA}(\bar\pi^s) ~\forall~s \ge 1 \text{ and }j\in[M]. $
Consequently, $\phi_{\RA}(\bar\pi^s) = \mathcal{O}(1/s)$.
\end{theorem}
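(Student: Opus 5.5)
With \(\eps=1\) the constraint \(\dtv{\mu_t^k,\hat\mu_t^k}\le 1\) in \eqref{eq:ambiguity:set} is always satisfied. Hence \(\ambset(\mu,1)=:\mathcal{W}_j\) is the set of all dynamically feasible flows of the other populations started from \(\mu_0^{-j}\), and it \emph{does not depend on the nominal flow}. The plan is to use this, together with the decomposition \eqref{eq:reward:decomp}, to reduce each population's risk-averse problem to a single-population MFG with a concave, flow-independent penalty. I would then run the classical Lyapunov argument for continuous-time fictitious play in occupation-measure coordinates.

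\textbf{Step 1 (reformulation).} By \eqref{eq:occ:phi} and \eqref{eq:reward:decomp}, for \(\tilde o^j\in\Occ^j\) and nominal \(o\),
\begin{align*}
\Phi_j(\tilde o^j;o)=\sum_{t,x}\tilde\xi_t^j(x)\,b_t^j(x,\xi_t^j)+g_j(\tilde\xi^j),\qquad g_j(\tilde\xi^j)\Let\inf_{\hat\mu^{-j}\in\mathcal{W}_j}\sum_{t,x}\tilde\xi_t^j(x)\,a_t^j(x,\hat\mu_t^{-j}).
\end{align*}
Here \(g_j\) is an infimum of linear functions of \(\tilde\xi^j\), so it is concave and finite because \(a_t^j\) is bounded. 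It is also independent of \(o\). Write \(V_j(\tilde\xi,\xi)\Let\langle\tilde\xi,b^j(\xi)\rangle+g_j(\tilde\xi)\). By Remark~\ref{rem:policy:occ:equivalence},
\[
\phi^j_{\RA}(\pi)=\max_{\tilde o^j\in\Occ^j}V_j(\tilde\xi^j,\xi^{\pi,j})-V_j(\xi^{\pi,j},\xi^{\pi,j}).
\]
Only population \(j\)'s own flow enters, so the populations decouple.

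\textbf{Step 2 (FP dynamics).} The averaged policy \eqref{eq:discrete-bar-pi} is exactly the policy induced by the averaged occupation measure, because \(\Occ^j\) is convex and the correspondence is linear in \(\eta\). The continuous-time version of \eqref{eq:discrete-bar-mu} is therefore
\[
\tfrac{\dd}{\dd s}\bar o^{j,s}=\tfrac1s\bigl(o^{j,s}_{\mathrm{BR}}-\bar o^{j,s}\bigr),
\]
where \(o^{j,s}_{\mathrm{BR}}\in\argmax_{\tilde o^j}V_j(\tilde\xi^j,\bar\xi^{j,s})\) is chosen measurably in \(s\). Existence of such a selection follows from compactness of \(\Occ^j\) and upper hemicontinuity of the argmax via Berge's theorem.

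\textbf{Step 3 (Lyapunov computation).} Fix \(j\) and abbreviate \(\bar\xi=\bar\xi^{j,s}\), \(\xi^{\mathrm{BR}}=\xi^{j,s}_{\mathrm{BR}}\), \(\dot{\bar\xi}=\tfrac{\dd}{\dd s}\bar\xi\).
\begin{itemize}
\item \emph{First term.} By Danskin's theorem, the derivative of \(s\mapsto\max_{\tilde o}V_j(\tilde\xi,\bar\xi)\) equals \(\langle\xi^{\mathrm{BR}},Db^j(\bar\xi)[\dot{\bar\xi}]\rangle\); in general this holds as an upper Dini derivative.
\item \emph{Second term.} The derivative of \(V_j(\bar\xi,\bar\xi)\) is
\[
\langle\dot{\bar\xi},b^j(\bar\xi)\rangle+\langle\bar\xi,Db^j(\bar\xi)[\dot{\bar\xi}]\rangle+g_j'(\bar\xi;\dot{\bar\xi}),
\]
using the one-sided directional derivative of the concave function \(g_j\).
\item \emph{Monotone part.} Subtracting, the \(Db\) terms combine to
\[
\langle\xi^{\mathrm{BR}}-\bar\xi,Db^j(\bar\xi)[\dot{\bar\xi}]\rangle=s\,\langle\dot{\bar\xi},Db^j(\bar\xi)\dot{\bar\xi}\rangle\le0 .
\]
The inequality holds because a \(C^1\) monotone map on \(\mathcal V\) (Assumption~\ref{assum:eps:one:monotone:reward}) has a negative-semidefinite derivative along differences of probability vectors, applied for each \(t\).
\item \emph{Concave part.} Concavity gives \(g_j(\xi^{\mathrm{BR}})\le g_j(\bar\xi)+g_j'(\bar\xi;\xi^{\mathrm{BR}}-\bar\xi)\). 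Together with positive homogeneity of \(g_j'(\bar\xi;\cdot)\), this yields \(-g_j'(\bar\xi;\dot{\bar\xi})\le-\tfrac1s\bigl(g_j(\xi^{\mathrm{BR}})-g_j(\bar\xi)\bigr)\).
\end{itemize}
Collecting terms,
\[
\tfrac{\dd}{\dd s}\phi^j_{\RA}\le-\tfrac1s\bigl[\langle\xi^{\mathrm{BR}}-\bar\xi,b^j(\bar\xi)\rangle+g_j(\xi^{\mathrm{BR}})-g_j(\bar\xi)\bigr]=-\tfrac1s\phi^j_{\RA}(\bar\pi^s).
\]
Gronwall's lemma then gives \(\phi^j_{\RA}(\bar\pi^s)\le\phi^j_{\RA}(\bar\pi^1)/s\). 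Summing over \(j\) gives \(\phi_{\RA}(\bar\pi^s)=\mathcal O(1/s)\).

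\textbf{Main obstacle.} The delicate part is regularity. The map \(s\mapsto\phi^j_{\RA}\) involves a max with a possibly non-unique argmax, and \(g_j\) is only concave, hence nonsmooth. I would first show that \(\bar\xi^{s}\) is Lipschitz in \(s\), so that \(\phi^j_{\RA}\) is Lipschitz and differentiable almost everywhere. I would then carry out the computation in Step 3 with upper Dini derivatives, using the envelope inequality \(\limsup_{h\downarrow0}\tfrac1h\bigl[\max V(\cdot,\bar\xi^{s+h})-\max V(\cdot,\bar\xi^{s})\bigr]\le\langle\xi^{\mathrm{BR}},Db^j[\dot{\bar\xi}]\rangle\) for any selected maximizer. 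This suffices for the integrated \(\mathcal O(1/s)\) bound.
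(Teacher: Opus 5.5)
Your argument is, in substance, the paper's. With $\varepsilon=1$ the ambiguity set is frozen, and the split $r=a+b$ decouples the populations. The averaged flow is locally Lipschitz in $s$. The two $Db$ terms combine into $s\langle\dot{\bar\xi},Db^j(\bar\xi)\dot{\bar\xi}\rangle\le0$ by Assumption~\ref{assum:eps:one:monotone:reward}, and the opponent part is controlled by the infimum.

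The only difference is packaging. You absorb the inner infimum into the concave function $g_j$ and use $g_j(\xi^{\mathrm{BR}})\le g_j(\bar\xi)+g_j'(\bar\xi;\xi^{\mathrm{BR}}-\bar\xi)$. The paper instead freezes explicit worst-case flows $\nu^s,\nu^{\mathrm{BR},s}$. Its ``third term is nonpositive by the inf'' is exactly the supergradient inequality $g_j(\xi^{\mathrm{BR}})\le g_j(\bar\xi)+\langle\xi^{\mathrm{BR}}-\bar\xi,a^j(\cdot,\nu^s)\rangle$ for the particular supergradient $a^j(\cdot,\nu^s)$. Your formulation makes the decoupling and the role of concavity more transparent.

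One step fails as written: your envelope inequality points the wrong way.
\begin{itemize}
\item For $m(s):=\max_{\tilde o^j}V_j(\tilde\xi^j,\bar\xi^s)$, freezing the selected maximizer gives $m(s+h)-m(s)\ge V_j(\xi^{\mathrm{BR}},\bar\xi^{s+h})-V_j(\xi^{\mathrm{BR}},\bar\xi^{s})$. This is a \emph{lower} bound on forward increments, so $\liminf_{h\downarrow0}\frac{1}{h}[m(s+h)-m(s)]\ge\langle\xi^{\mathrm{BR}},Db^j(\bar\xi)\dot{\bar\xi}\rangle$, not $\limsup\le$.
\item Danskin's formula gives the right derivative as $\max_{\xi^*}\langle\xi^*,Db^j(\bar\xi)\dot{\bar\xi}\rangle$ over \emph{all} maximizers $\xi^*$. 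This can exceed the value at the selected maximizer.
\item Since $\dot{\bar\xi}=\frac1s(\xi^{\mathrm{BR}}-\bar\xi)$ is tied to the selected $\xi^{\mathrm{BR}}$, the cross term $\langle\xi^*-\bar\xi,Db^j(\bar\xi)\dot{\bar\xi}\rangle$ for $\xi^*\neq\xi^{\mathrm{BR}}$ is not controlled by monotonicity. So a forward, upper-Dini argument does not close.
\end{itemize}

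The fix is the paper's ``$h$ of either sign'' step. For $h<0$, dividing the same frozen-maximizer inequality by $h$ bounds the backward quotients from above. Your Lipschitz bound already gives a full-measure set of $s$ where $\bar\xi^s$, $m$ and $\phi^j_{\mathrm{RA}}(\bar\pi^s)$ are all differentiable. At those points the two one-sided bounds pinch to $m'(s)=\langle\xi^{\mathrm{BR},s},Db^j(\bar\xi^s)\dot{\bar\xi}^s\rangle$. There, your right directional derivative $g_j'(\bar\xi;\dot{\bar\xi})$ is the true derivative of $s\mapsto g_j(\bar\xi^s)$, and the rest of Step~3 goes through unchanged. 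The a.e.\ inequality plus local absolute continuity then yields $\frac{d}{ds}\bigl[s\,\phi^j_{\mathrm{RA}}(\bar\pi^s)\bigr]\le0$. So drop the Dini-derivative route and argue only at points of differentiability.
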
    
\end{myOCP}

\section{Numerical Experiments}\label{sec:numexp}

In this section, we investigate the effects of risk aversion in MP-MFGs and demonstrate the efficacy of our proposed entropy-regularized risk-averse fixed-point iteration (RA-FPI) and risk-averse fictitious play (RA-FP) for computing equilibrium solutions across a range of settings.
For all problems, we report the exploitability of the RA-MP-MFE (Definition~\ref{def-ra-explt}) and its entropy-regularized counterpart, where \(\bar{\objective}_j(\pi^j;\mu^\pi)\) in Definition~\ref{def-ra-explt} is replaced by \(\bar{\objective}_{j,\beta}(\pi^j;\mu^\pi)\).

To the best of our knowledge, there are no existing baselines that can be directly applied to the class of RA-MP-MFG problems considered here.
In the absence of standard baselines, we instead investigate the sensitivity of different MP-MFG equilibrium notions to deviations in the mean-field trajectories.
Specifically, we consider the standard multi-population MFE (MP-MFE)~\cite{perolat2021scalingmeanfieldgames}, its entropy-regularized counterpart (ER-MP-MFE), our proposed RA-MP-MFE (Definition~\ref{def:RA:MP:MFE}) computed using RA-FP, and its entropy-regularized counterpart (Definition~\ref{def:entropic:RA:MP:MFE}) computed using RA-FPI, hereafter referred to a ER-RA-MP-MFE, respectively.
For each equilibrium policy, we generate \(N\) perturbations of the nominal equilibrium mean-field distributions. 
For each perturbed mean-field flow, we evaluate the expected cumulative reward of the representative agent under the corresponding equilibrium policy.
We then rank the \(N\) perturbed trajectories according to their expected cumulative rewards and report the mean difference between the nominal performance and the average performance over the worst \(10\%\) of perturbations
\begin{align}\label{regret-metric}
&\Delta_{\textrm{EQ}} \Let
\mathbb{E}^{\pi^{j,\textrm{EQ}}}\left[
\sum_{t=0}^{T} r_t^j \left(x_t^j,\mu_t^{j,\textrm{EQ}},
\mu_t^{-j,\textrm{EQ}}\right)\right]- \nn \\
&  \frac{1}{|\mathcal{I}_{10\%}|}
\sum_{i\in\mathcal{I}_{10\%}}
\mathbb{E}^{\pi^{j,\textrm{EQ}}}
\left[\sum_{t=0}^{T}r_t^j
\left(x_t^j,\mu_t^{j,\textrm{EQ}},\mu_t^{-j,(i)}\right)
\right],
\end{align}
where \(\mathcal{I}_{10\%}\) denotes the set of indices corresponding to the \(10\%\) of perturbations with the lowest expected cumulative rewards, and $\textrm{EQ}\in
\left\{
\substack{
\textrm{MP-MFE},\textrm{ER-MP-MFE},\\
\textrm{RA-MP-MFE},\textrm{ER-RA-MP-MFE}
}
\right\}$.
Thus, \(\Delta_{\textrm{EQ}}\) provides an empirical measure of the sensitivity of each equilibrium policy to unfavorable mean-field trajectory deviations.
Additionally, we perform extensive ablation studies to examine the effects of the key parameters introduced in our formulation, namely \(\varepsilon\), \(\beta\), and \(\sigma\).
Further details are given in Appendix~\ref{sec:add:num:det:appen}.

In the case where \(\eps=1\), the ambiguity set~\eqref{eq:ambiguity:set}  becomes independent of the nominal trajectory, allowing the inner minimization over the uncertain population distributions in~\eqref{J-bar-inf} to be reformulated using dual variables $V_t \in\mathbb{R}^{|\X|}$ for all $t = 0, \ldots, T$. 
A detailed derivation of this dual reformulation is provided in Appendix~\ref{sec:add:num:det:appen}.

\begin{table}[t]
    \centering
    \scriptsize
    \setlength{\tabcolsep}{3pt}
    \renewcommand{\arraystretch}{0.9}
    \begin{tabular}{llcccc}
        \toprule
        & & MP- & ER-MP- & RA-MP- & ER-RA-MP- \\
        & & MFE & MFE & MFE & MFE \\
        \midrule
         Grid & Nominal & \textbf{7.67} & 7.60 & 0.35 & 0.37 \\
             Navigation      & $\Delta_{\textrm{EQ}}$ & 7.65 & 7.58 & \textbf{0.29} & 0.35 \\
        \midrule
        3 Population   & Nominal & \textbf{12.13} & 11.73 & 12.05 & 11.77 \\
           Chasing          & $\Delta_{\textrm{EQ}}$ & 1.16 & 1.91 & \textbf{1.10} & 2.0 \\
            \midrule
        4 Population   & Nominal & \textbf{11.91} & 11.59 & 11.78 & 11.58 \\
           Chasing          & $\Delta_{\textrm{EQ}}$ & 1.26 & 2.0 & \textbf{1.13} & 2.06 \\
            \midrule
        Stock   & Nominal & \textbf{136.65} & 136.77 & 130.56 & 130.49 \\
           Trading          & $\Delta_{\textrm{EQ}}$ & 25.21 & 25.51 & \textbf{22.09} & 22.16 \\
        \bottomrule
    \end{tabular}
    \caption{Expected rewards under nominal mean-field flow $(\mu_t^{j,\textrm{EQ}},
\mu_t^{-j,\textrm{EQ}})$ and sensitivity of corresponding equilibrium policies as defined in~\eqref{regret-metric}.}
    \label{tab:sensitivity}
     \vspace{-0.1in}
\end{table}

\textbf{Grid Navigation ($\eps = 1$).} We consider an {RA-MP-MFG} with two populations traversing a \(5\times5\) grid. 
Population 2 aims to reach the bottom-right corner (Figure~\ref{fig:navigation}(c)) and receives a per-time-step reward proportional to the fraction of the population at the target. 
Population 1, at a given state $x^1_t\in\X$, receives $r_t^1(x_t^1,\mu_t^1,\mu_t^2)=\mu_t^2(x_t^1)$, thereby incentivizing it to align its distribution with that of Population 2.
We assume that Population 1 is risk-averse to deviations in Population 2's trajectory.
Table~\ref{tab:sensitivity} shows that MP-MFE and ER-MP-MFE achieve higher nominal cumulative rewards but exhibit a substantial degradation in performance under deviations from the equilibrium flows, indicating greater sensitivity to trajectory perturbations.
In contrast, RA-MP-MFE and ER-RA-MP-MFE achieve lower nominal rewards while exhibiting greater robustness to deviations, highlighting the tradeoff between reward maximization and robustness.

As shown in Figure~\ref{fig:navigation}(d)-(f), risk aversion leads Population 1 from a nearly deterministic trajectory to a more spatially diversified strategy. 
When \(\eps=1\), the ambiguity set contains all dynamically feasible trajectories of Population 2, whose behavior is independent of Population 1.
The resulting worst-case objective therefore favors policies that distribute probability mass across multiple states, providing greater robustness to distributional deviations.
Increasing \(\beta\) strengthens entropy regularization and, consequently, improves the reliability of RA-FPI, as illustrated in Figure~\ref{fig:navigation}(a).
However, increasing \(\beta\) also encourages greater stochasticity in the decision-making policies, leading to more uniform population trajectories.
Figure~\ref{fig:navigation}(b) further shows that the FP exploitability decays at a rate consistent with Theorem~\ref{thm:fp_FH}. 
\begin{figure}[t!]
    \centering
    \includegraphics[scale=0.3]{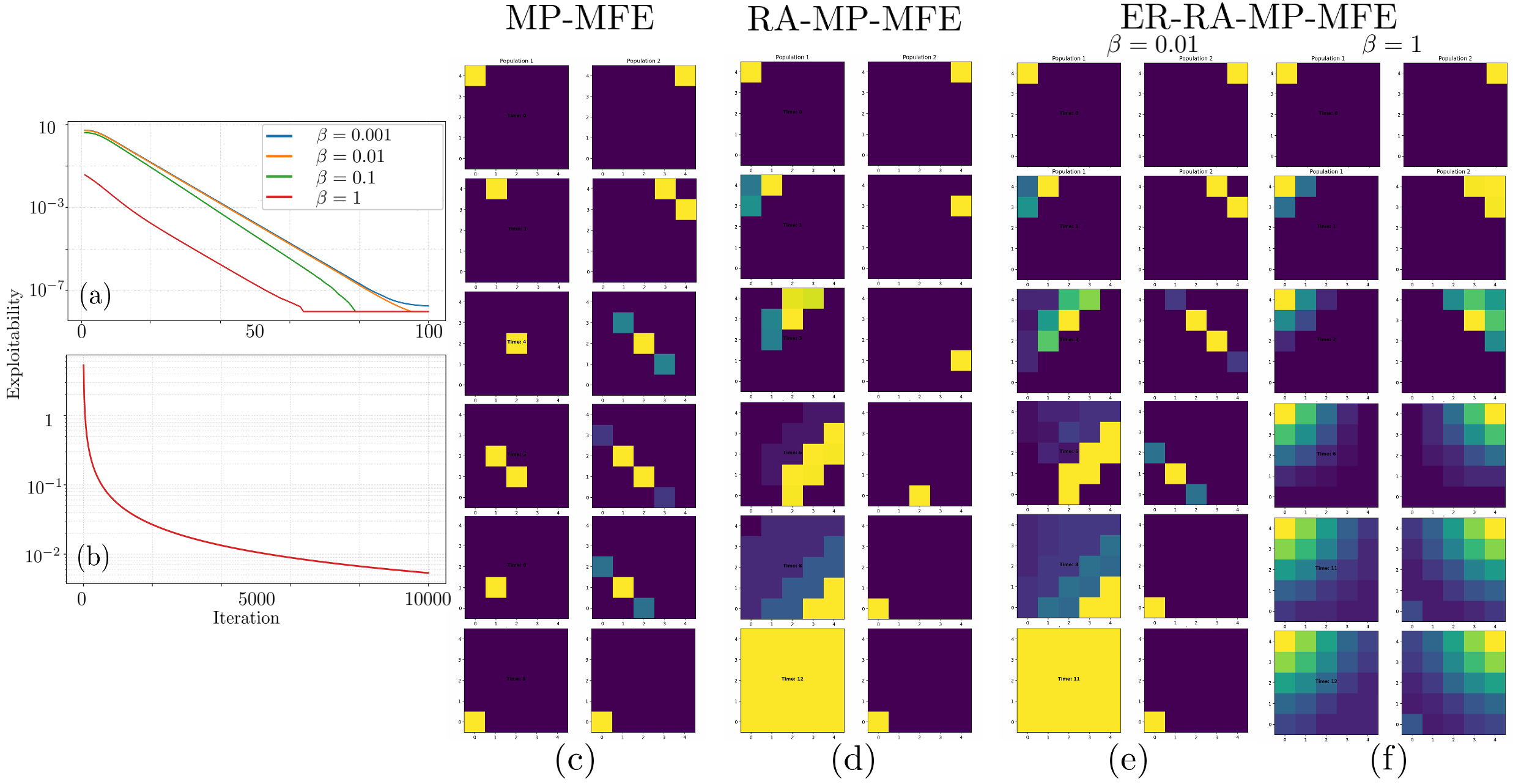}
    \caption{(a) RA-FPI; (b) RA-FP; (c)-(f) population distributions of Navigation game.}
    \label{fig:navigation}
    \vspace{-0.25in}
\end{figure}

\textbf{Multi-Population Chasing ($\eps = 1$).} The multi-population chasing game introduced in~\cite{perolat2021scalingmeanfieldgames} is a standard MP-MFG setting in which \(M\) populations move over a 2D grid, with rewards of the form: $r^j(x, \mu^1, \ldots, \mu^M) \Let -\log(\mu^j(x)) + \sum_{i\neq j}\mu^i(x)\bar r^{j, i}(x)$, for all $x\in \X$.
The dependence of each population's reward on the mean-field distributions of all populations results in a coupled problem.
We consider two variants of this problem on a \(5\times5\) grid: (i) a 3 population chasing problem in which Population 1 is risk-averse to deviations in Population 2's trajectory, and (ii) a 4 population chasing problem in which Population 1 is risk-averse to deviations in the trajectories of Populations 2 and 3, while all other interactions remain risk-neutral.
Once again, Table~\ref{tab:sensitivity} shows that RA-MP-MFE may achieve lower nominal rewards but its average
worst-case reward degrades less under deviations.
We attribute the higher sensitivity of ER-RA-MP-MFE to the larger value of $\beta$ ($=1$), which, although necessary to ensure convergence, can induce increasingly uniform behavior and lead to greater deviations from the true objective function.

Figure~\ref{fig:chasing} similarly shows that entropy regularization promotes convergence toward the ER-RA-MP-MFE as \(\beta\) increases, while FP exhibits slower but steady convergence toward the RA-MP-MFE in both variants.
In the 4 population case, the risk-neutral equilibrium is characterized by symmetric, spatially dispersed trajectories (Figure~\ref{fig:chasing}(e)).
Risk aversion qualitatively changes this behavior, with the key differences between the risk-neutral and risk-averse settings highlighted by the red box.
Under risk aversion, Population 1 breaks this symmetry and adopts a \emph{safer trajectory} with respect to Populations 2 and 3 by reducing the mass pursuing Population 2, thereby reducing its vulnerability to feasible deviations in Population 2's trajectory. 

\begin{figure}[t!]
    \centering
    \includegraphics[scale=0.5]{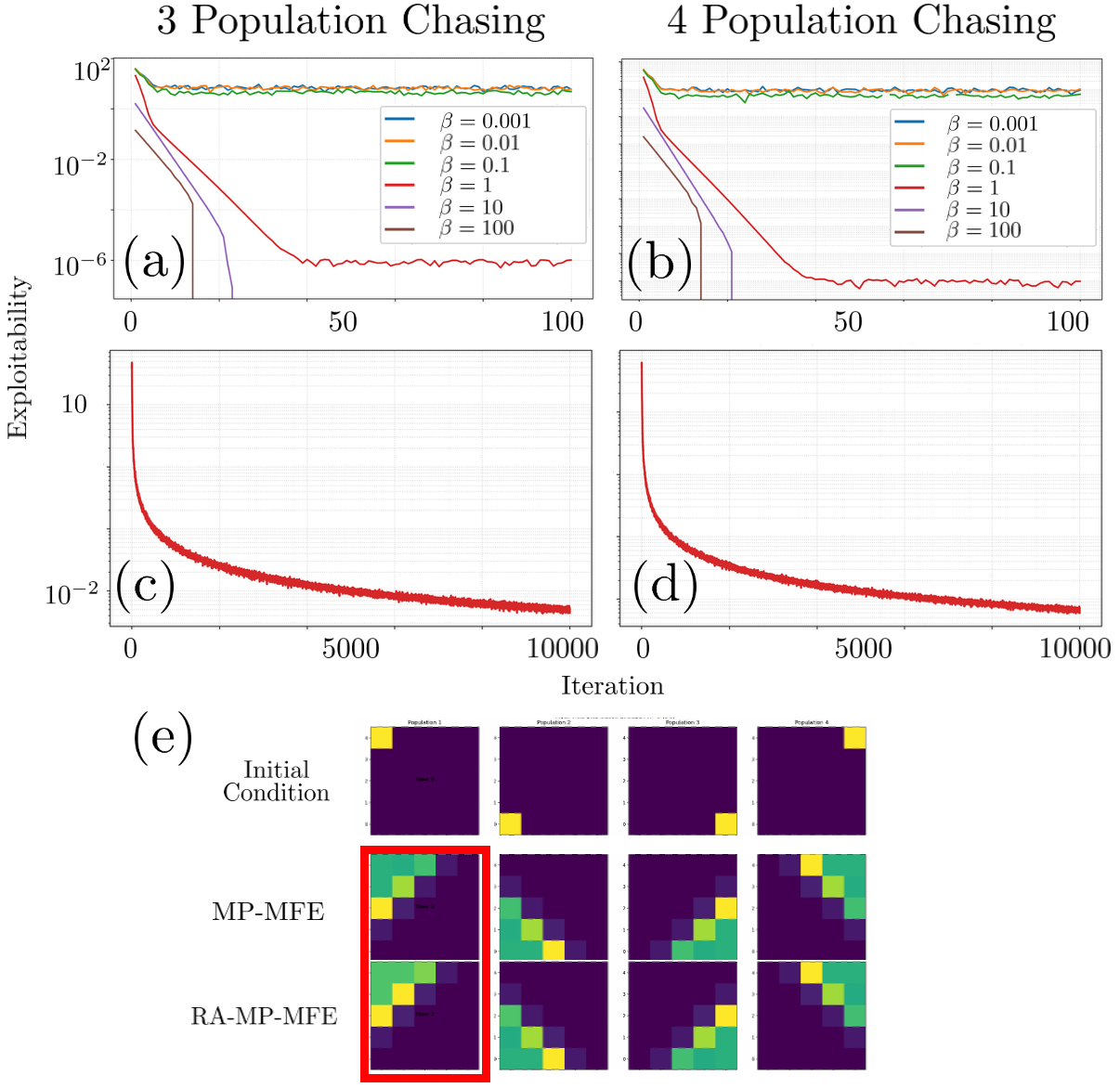}
    \caption{(a)-(b) RA-FPI; (c)-(d) RA-FP; (e) population distributions of 4 Population Chasing.}
    \label{fig:chasing}
    \vspace{-0.25in}
\end{figure}

\textbf{Stock Market Trading ($\eps = 1$).} Consider the stock market example presented in Section~\ref{sec:introduction}.
We construct a simplified, multi-population model of the the trading scenario introduced as the \emph{price impact problem} in~\cite{carmona2020applicationsmeanfieldgames, cardaliaguet2018mean} between AI (Population 1) and human (Population 2) populations by discretizing the inventory~\cite{wibault2026recurrentstructuralpolicygradient} and assuming endogenous price changes that are independent of the mean-field.

The discrete inventory positions are $\X \Let \{-K, \ldots, 0, \ldots, K\}$ for some integer $K>0$ representing the holdings of a population.
The actions available to each population are \texttt{SELL}, \texttt{HOLD}, and \texttt{BUY}, with inventory transitions evolving deterministically.
The change in the stock price at time $t$ is denoted by $\Delta p_t$ and follows  $\Delta p_t \Let \gamma_t + \theta_t z_t$, where $\gamma_t$ represents the price drift, $\theta_t$ represents the volatility and $z\sim \textrm{Discrete}(-1, 0, 1)$, where $\E[z_t]= 0$ and $\textrm{var}(z_t) = 1$.
The rewards for the AI population follows $r^1_t(x^1_t, \mu^1_t, \mu^2_t) \Let \gamma_tx^1_t + c_0x^1_t\cdot \left\langle q, \mu^2_t\right\rangle - c_1(x^1_t)^2$, where $q\t \Let [-K, \ldots, 0, \ldots K]$ and $c_0, c_1>0$.
The term $c_0x^1_t\cdot \left\langle q, \mu^2_t\right\rangle$ represents the price pressure created by aggregate human inventory shifts and $c_1(x_t)^2$ penalizes large AI inventory positions due to volatility and holding costs.
The rewards for the human population is given by $r^2_t(x^2_t,  \mu^1_t, \mu^2_t) \Let \gamma_tx^2_t -w_0(\theta_t x_t)^2+ w_1x^2_t\cdot \left\langle q, \mu^2_t\right\rangle - w_2\mathds{1}_{\gamma_t < -\chi}(x^2_t + K)$, where  $-w_0(\theta_t x_t)^2$, $w_1x^2_t\cdot \left\langle q, \mu^2_t\right\rangle$ and   $w_2\mathds{1}_{\gamma_t < -\chi}(x^2_t + K)$ represent the penalty due to volatility in the price, herding behavior and penalty due to panic selling when the stock price drift falls below $-\chi$.

In order to bridge the gap between mean-field theory and real-world applications, we calibrate the price model to mimic the dynamics of Apple Inc. (AAPL)  over September 2025, using data retrieved through \texttt{yfinance}~\cite{aroussi2026yfinance} and analyze the effect of risk-aversion of the AI population to the deviations in the human population's mean-field trajectory.
One can verify that the AI reward does not satisfy the monotonocity condition in Assumption~\ref{assum:eps:one:monotone:reward}.
We therefore use a small entropy-regularization parameter, $\beta=10^{-6}$, and compute the ER-RA-MP-MFE as an approximation to the RA-MP-MFE, as supported by Proposition~\ref{thm:approx:vanishing:entropic:RA:MP:MFE} in Appendix~\ref{sec:existence:ent-reg:appen}.
Table~\ref{tab:sensitivity} shows that the risk-averse policy exhibits the lowest sensitivity to trajectory deviations, while Figure~\ref{fig:trading} illustrates the resulting exploitability for different values of $\beta$.

\begin{figure}[t!]
    \centering
    \includegraphics[scale=0.65]{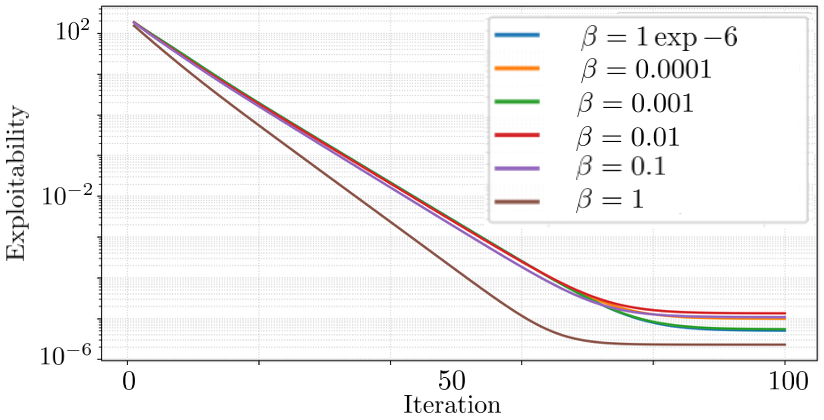}
    \caption{Exploitability for Stock Market Trading.}
    \label{fig:trading}
    \vspace{-0.2in}
\end{figure}

\textbf{Linear Navigation ($\eps \in (0, 1)$).} 
We consider an MP-MFG with two populations traversing a three-state environment, where agents can move \texttt{LEFT}, \texttt{RIGHT}, or remain \texttt{STATIONARY}.
Similar to the Grid Navigation problem, Population 1, at a given state $x^1_t\in\X$, receives $r_t^1(x_t^1,\mu_t^1,\mu_t^2)  \Let \mu_t^2(x_t^1)$, which satisfies Assumption~\ref{assum:eps:0:1}. 
We assume that Population 1 is risk-averse to deviations in Population 2's trajectory.
For \(\eps\in(0,1)\), \(\ambset(\mu,\eps)\) depends explicitly on the nominal mean-field trajectory. 
Consequently, the inner minimization is coupled with the nominal trajectory and the dual formulation no longer \edit{yields a tractable problem or a computational speedup.}
We therefore solve the resulting max-min problem directly using the iterative \texttt{MINIMAX-APPA} algorithm proposed in~\cite{pmlr-v125-lin20a}, which is designed to handle such coupled minimax optimization problems.
Figure~\ref{fig:linear_navigation}(a) shows a reduction in exploitability for fixed values of \(\sigma\) and \(\beta=1\), consistent with the guarantee of Theorem~\ref{thm:contractivity:inner:regularized:BR}.
We also observe that the risk-averse payoff decreases as \(\epsilon\) increases (Figure~\ref{fig:linear_navigation}(c)), as expected since the set of dynamically feasible perturbations expands with \(\epsilon\).
\edit{Finally, an ablation over different values of \(\sigma\), with fixed \(\eps=0.1\) and \(\beta=1\) (Figure~\ref{fig:linear_navigation}(b)), shows that reducing the value of \(\sigma\) makes the inner minimization problem more difficult to solve due to weaker convexity.}
%
%In contrast, larger values of \(\sigma\) require larger values of the entropy parameter \(\beta\) to ensure convergence, as predicted by Theorem~\ref{thm:contractivity:inner:regularized:BR}. 

\edit{While \texttt{MINIMAX-APPA} performs well in low-dimensional settings, its iterative nature can lead to numerical errors as the state-action space grows.
Indeed, directly solving the resulting max-min problem can become challenging due to the complexity of alternating optimization and the scalability of iterative minimax methods.
Developing efficient and numerically viable approaches for larger state-action spaces remains an important direction for future work.}

\begin{figure}[t!]
    \centering
    \includegraphics[scale=0.7]{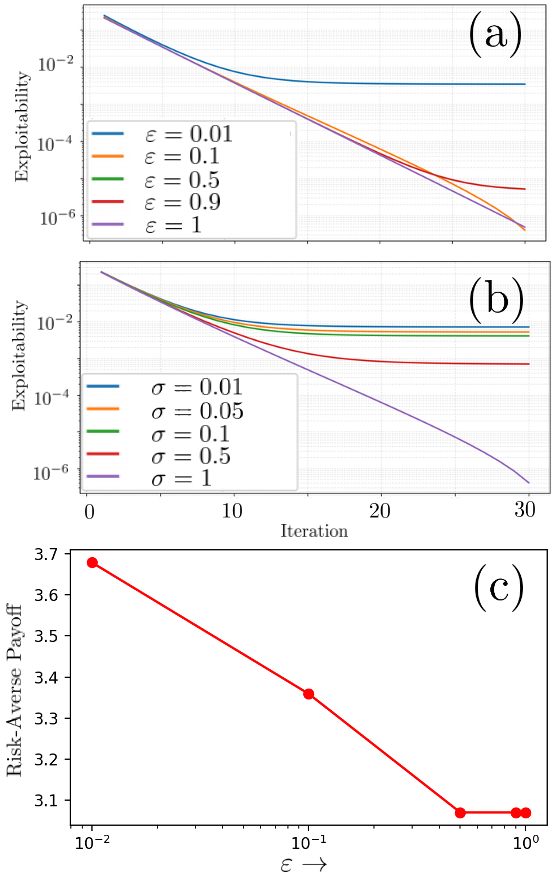}
    \caption{(a) Varying $\eps$ under $\sigma = 1$; (b) Varying $\sigma$ under $\eps=0.1$; (c) Cumulative rewards.}
    \label{fig:linear_navigation}
     \vspace{-0.2in}
\end{figure}

\section{Conclusion}

We introduced a framework for risk-averse multi-population mean-field games, in which each population solves a coupled robust optimization problem against dynamically feasible deviations of the other populations.
This coupling gave rise to a novel risk-averse multi-population mean-field equilibrium.
Using tools from variational analysis, we established the existence of an RA-MP-MFE and leveraged entropy regularization to obtain contractive approximate RA-MP-MFEs.
Through Lyapunov analysis, we further showed that an extended fictitious-play dynamics converges to an RA-MP-MFE under appropriate monotonicity conditions.
Numerical experiments demonstrated that RA-MP-MFEs are less sensitive to deviations in the nominal mean-field trajectories compared to traditional MP-MFEs, highlighting the robustness induced by our formulation.
An important direction for future work is to extend the framework to model-free settings with function approximation.

\bibliographystyle{amsalpha}
\bibliography{refs}

\appendix

\clearpage

\appendix

\setcounter{theorem}{0}
\setcounter{proposition}{0}
\setcounter{definition}{0}

\section*{Appendix}\label{appendix:a}

\section{Technical TL;DR and a roadmap}\label{sec:tech:tldr:appen}
We summarize our primary theoretical contributions in brief; see also Figure~\ref{fig:theoretical-roadmap} for a bird's-eye view.
%%%%%%%%%%%%%%%%%
\begin{enumerate}[label={\textup{(Res-\alph*)}}, leftmargin=*,align=left]
\item \label{results:1} \highlight{Main theorems:} Theorem \ref{thm:existence:RA:MP:MFE} establishes existence of the RA-MP-MFG;
Theorems \ref{thm:existence:entropic:RA:MP:MFE}--\ref{thm:contractivity:inner:regularized:BR:appen} provide existence and contractivity results (for \(\eps =1\) and \(\eps \in (0,1)\)) for the entropy-regularized RA-MP-MFG. Without any entropy regularization, for the RA-MP-MFG, Theorem \ref{thm:fp_FH} analyzes continuous-time fictitious play and provides a certificate of convergence. Proofs are in Appendices~\ref{sec:reach:set:appen}--\ref{sec:add:num:det:appen}.

\item \label{results:2} \highlight{Auxiliary and standalone results:} Lemma \ref{prop:reach:set:properties:appen}, \ref{lem:strong:concavity:entreg}, and Proposition \ref{prop:ambiguity:set} are technical results needed to prove the Theorems mentioned in \ref{results:1}. Proposition \ref{thm:non-contractivity:RA:MP:MFG} and \ref{thm:approx:vanishing:entropic:RA:MP:MFE} concerns non-contractivity of the RA-MP-MFE and approximation of MFEs. These are standalone results.
\end{enumerate}
%%%%%%%%%%%%%%%%%%%
% Figure~\ref{fig:theoretical-roadmap} provides an overview of these results and the proofs are provided in Appendices~\ref{sec:reach:set:appen}--\ref{sec:add:num:det:appen}. 
%%%%%%%%%%%%%%%%%%%

\begin{figure}[!tp]
\centering
\resizebox{\textwidth}{!}{%
\begin{tikzpicture}[
    y=1.2cm,
    font=\small,
    box/.style={draw=black!55,rounded corners=2pt,line width=0.6pt,
        align=flush center,inner sep=7pt,outer sep=0pt},
    foundation/.style={box,fill=black!3},
    original/.style={box,draw=orange!65!black,fill=orange!6},
    regularized/.style={box,draw=blue!55!black,fill=blue!4},
    approximation/.style={box,draw=violet!65!black,fill=violet!5},
    connector/.style={
        draw=black!65,
        line width=0.85pt,
        rounded corners=4pt,
        line cap=round,
        line join=round
    },
    arrow/.style={
        connector,
        -{Stealth[length=2.8mm,width=2mm]}
    },
    edge label/.style={font=\footnotesize,fill=white,inner sep=2pt}
]

% Structural foundation and existence.
\node[foundation,text width=14.6cm] (structure) at (0,0) {
    \textbf{Risk-Averse MP-MFG}\\[2pt]
    \textbf{Lemma \ref{prop:reach:set:properties:appen} and Proposition \ref{prop:ambiguity:set}:} Reachable set \(\reachset\) and ambiguity set
    $\mathcal{W}_j(\mu,\eps)$ + properties (nonempty, compactness, convexity) with $\eps\in[0,1]$ (when $\eps=0$: we have nominal MP-MFG).
};

\node[original,text width=14.6cm] (existence) at (0,-1.8) {
    \textbf{Theorem \ref{thm:existence:RA:MP:MFE}: existence of an RA-MP-MFE} (Asmp. \ref{assmpt:lipschitiz-model}; every $\eps\in[0,1]$)\\[2pt]
    \textbf{Proposition \ref{thm:non-contractivity:RA:MP:MFG}:} if $\Gamma(\mathcal{M})$ is finite and
    $\Gamma$ is nonconstant, then $\Gamma$ is not contractive.
};

\draw[arrow] (structure) -- (existence);

% Two computational routes.
\node[regularized,text width=8.05cm,minimum height=2.45cm]
    (entropy) at (-2.725,-4.5) {
    \textbf{Theorem \ref{thm:existence:entropic:RA:MP:MFE}: existence with Ent. Reg.}\\[2pt]
    Asmp. \ref{assmpt:lipschitiz-model}; $\eps\in[0,1]$; $\beta>0$.\\
    An Ent. Reg. RA-MP-MFE exists.\\[5pt]
    \textbf{Lemma 2:} strong concavity of
    $\Phi_{j,\beta}(\cdot;o)$ and unique Br.
};

\node[original,text width=4.2cm,minimum height=2.45cm]
    (fp) at (5.45,-5.05) {
    \textbf{Theorem \ref{thm:fp_FH}: fictitious play}\\[3pt]
    $\eps=1$; Asmp. \ref{assum:eps:one:case} and \ref{assum:eps:one:monotone:reward}.\\
    Continuous-time FP\\
    for the unregularized game:\\[3pt]
    $\varphi_{\mathrm{RA}}(\bar\pi^s)=\mathcal{O}(1/s)$.
};

\draw[arrow] (existence.south -| entropy.north)
    -- node[edge label]{add entropy regularization} (entropy.north);

\draw[arrow] (existence.south -| fp.north)
    -- node[edge label]{use fictitious play} (fp.north);

% Contractivity: two ambiguity-radius regimes.
\node[regularized,text width=3.7cm,minimum height=2cm,anchor=north]
    (full) at (-5.45,-6.8) {
    \textbf{Theorem \ref{thm:contractivity:regularized:BR}: $\eps=1$}\\[4pt]
    Under Asmp. \ref{assum:eps:one:case}\\
    $\operatorname{Br}_\beta$ is contractive.
};

\node[regularized,text width=3.7cm,minimum height=2cm,anchor=north]
    (partial) at (0,-6.8) {
    \textbf{Theorem \ref{thm:contractivity:inner:regularized:BR}: $0<\eps<1$}\\[4pt]
    Under Asmp. \ref{assum:eps:one:case} + \ref{assum:eps:0:1}.\\
    $\operatorname{Br}_\beta^\sigma$ and $\operatorname{Br}_\beta$ are contractive\\
    uniformly in $\sigma>0$.\\[4pt]
    As $\sigma\downarrow0$, $o_{\beta,\sigma}^{\star}\to o_\beta^{\star}$.
};

% Draw the common stem separately to keep the fork connected.
\coordinate (split) at (-2.725,-6.2);
\draw[connector] (entropy.south) -- (split);
\draw[arrow] (split) -| (full.north);
\draw[arrow] (split) -| (partial.north);

% Approximation follows from Theorem 2, without contractivity.
\node[approximation,text width=4.2cm,minimum height=3.8cm,anchor=north]
    (limit) at (5.6,-7.45) {
    \textbf{Proposition \ref{thm:approx:vanishing:entropic:RA:MP:MFE}: approximation}\\[4pt]
    Every $\eps\in[0,1]$; $\beta>0$.\\
    Original-game exploitability:\\[3pt]
    $\phi_{\mathrm{RA}}(\pi_\beta^\star)\le M\beta T\log|\mathcal{U}|$.\\[6pt]
    Limit $\beta\downarrow0$:\\
    every accumulation point of\\
    $o_\beta^\star$ induces an RA-MP-MFE.
};

\draw[arrow,draw=violet!65!black]
    (entropy.east) -- (2.525,-4.5) |- (limit.west);

% Common consequence of Theorems 3 and 4.
\node[regularized,text width=8.05cm] (convergence) at (-2.725,-11) {
    \textbf{Unique regularized equilibrium and convergence}\\[3pt]
    $\mu^{n+1}=\Gamma_\beta(\mu^n)\longrightarrow\mu_\beta^\star,\qquad \mu^0\in\xi(\mathsf{O})$.
};

\coordinate (merge) at (-2.725,-9.8);
\draw[connector] (full.south) |- (merge);
\draw[connector] (partial.south) |- (merge);
\draw[arrow] (merge) -- (convergence.north);

\end{tikzpicture}%
}
\caption{Our primary theoretical developments.}
\label{fig:theoretical-roadmap}
\end{figure}

\section{Reachable set}\label{sec:reach:set:appen}

Let us recall the definition of the reachable set (this is same as that of Definition \ref{def:reachable-set} in Section \ref{sec:prob:form}).
%%%%%%%% def reach set %%%%%%
\begin{myframe}
\begin{definition}[Reachable set]
\label{def:reachable-set:appen}
Fix \(j\in[M]\), \(t\in\{0,\ldots,T-1\}\), and \(\mu_t^j\in\P(\X)\).
The one-step reachable set of population \(j\) from \(\mu_t^j\) is defined by
\begin{equation}
\label{eqn:reachable-set-def}
\reachset \Let\aset[\Big]{\nxtmf\in\P(\X)\suchthat \exists\,\pi_t^j\in\Pi_t\text{ such that }\mu^+=\mu_t^jF_t^j(\pi_t^j)}.
\end{equation}
\end{definition}
\end{myframe}

Lemma \ref{prop:reach:set:properties:appen} records some structural properties of \(\reachset\), which will be utilized later in our analysis. 

\begin{myOCP}
\begin{lemma}[Structure of \(\reachset\)]\label{prop:reach:set:properties:appen}
Fix \(j\in[M]\), \(t\in\{0,\ldots,T-1\}\), and \(\mu^j_t\in\P(\X)\). Then, the following assertions hold:
\begin{enumerate}[label=\textup{(\ref{prop:reach:set:properties:appen}-\alph*)}, leftmargin=*, widest=b, align=left]

\item \label{prop:reach:set:properties:I} The reachable set \(\reachset\) defined in \eqref{eqn:reachable-set-def} is nonempty, compact, convex, and polyhedral. In particular, \(\reachset\) is a polytope contained in \(\P(\X)\). 

\item \label{prop:reach:set:properties:II} \(\reachset\) admits the representation
\begin{align}\label{eq:adm:reach:set}
   \reachset = \left\{\nxtmf \;\middle\vert\;  
    \begin{array}{@{}l@{}}
       \exists\, \eta:\X \times \U \to \Real_{+} \text{ such that } \sum_{u\in \U}\eta(x,u)=\mu^j_t(x) \\ \text{ for all }x\in \X,\,
       \nxtmf(y)=\sum_{x\in \X}\sum_{u\in \U}\eta(x,u)f^j_t(y|x,u) \\ \text{ for all } y\in \X
    \end{array}
    \right\},
\end{align}  
where \(\nxtmf \in\P(\X)\). 
\end{enumerate}

\end{lemma}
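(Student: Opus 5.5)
The plan is to prove \ref{prop:reach:set:properties:II} first and then read off \ref{prop:reach:set:properties:I} from it. The key observation is that \(\reachset\) is the image of a polytope under a linear map.

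For \ref{prop:reach:set:properties:II}, I would use the change of variables \(\eta(x,u)\Let\mu_t^j(x)\pi_t^j(u|x)\), exactly as in Remark~\ref{rem:policy:occ:equivalence}.
\begin{itemize}
\item Given \(\pi_t^j\in\Pi_t\), this \(\eta\) is nonnegative and satisfies \(\sum_u\eta(x,u)=\mu_t^j(x)\).
\item By \eqref{eqn:mf-dynamics-local}, \(\mu_t^jF_t^j(\pi_t^j)(y)=\sum_{x,u}\eta(x,u)f_t^j(y|x,u)\).
\item Conversely, given a feasible \(\eta\), set \(\pi_t^j(u|x)=\eta(x,u)/\mu_t^j(x)\) when \(\mu_t^j(x)>0\), and take \(\pi_t^j(\cdot|x)\) arbitrary otherwise.
\item When \(\mu_t^j(x)=0\), nonnegativity forces \(\eta(x,\cdot)=0\), so \(\eta(x,u)=\mu_t^j(x)\pi_t^j(u|x)\) holds in all cases.
\item Hence the two descriptions produce the same \(\mu^+\).
\end{itemize}
It remains to check that \(\mu^+\in\P(\X)\). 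Nonnegativity is immediate. Summing over \(y\) gives \(\sum_y\mu^+(y)=\sum_{x,u}\eta(x,u)=\sum_x\mu_t^j(x)=1\), since each \(f_t^j(\cdot|x,u)\) is a probability vector. This proves \eqref{eq:adm:reach:set}.

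For \ref{prop:reach:set:properties:I}, let
\[
H\Let\Bigl\{\eta\in\Real_+^{\X\times\U}: \textstyle\sum_u\eta(x,u)=\mu_t^j(x)\ \text{for all } x\in\X\Bigr\}.
\]
The set \(H\) is defined by finitely many linear equalities and inequalities, so it is a polyhedron. It is bounded, since \(0\le\eta(x,u)\le1\), and therefore it is a polytope. It is nonempty: fixing any \(u_0\in\U\), the choice \(\eta(x,u)=\mu_t^j(x)\mathds 1_{\{u_0\}}(u)\) is feasible.

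By \ref{prop:reach:set:properties:II}, \(\reachset=L(H)\), where \(L\) is the linear map \(L\eta(y)=\sum_{x,u}\eta(x,u)f_t^j(y|x,u)\). The image of a polytope under a linear map is again a polytope: it is the convex hull of the images of the finitely many vertices of \(H\), or one can invoke Fourier–Motzkin elimination. A polytope is automatically nonempty (here, as the image of a nonempty set), compact, convex and polyhedral. Containment in \(\P(\X)\) was shown above.

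The only mildly delicate step is the converse direction at states with \(\mu_t^j(x)=0\). The other point to make explicit is the polyhedrality of the image, which I would handle via the vertex representation (Minkowski–Weyl).
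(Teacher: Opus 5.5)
Your proposal is correct and follows essentially the same route as the paper: the change of variables \(\eta(x,u)=\mu_t^j(x)\pi_t^j(u|x)\), with the zero-mass states handled as you do, gives the representation, and polyhedrality follows because the reachable set is a linear image of the bounded polyhedron of feasible \(\eta\). The only cosmetic difference is that the paper obtains compactness and convexity separately, as the image of the polytope \(\Pi_t\) under a continuous affine map, whereas you read all of the properties off the polytope image of \(H\).
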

\end{myOCP}

% ========================================
% ========= Prop: Reachable set ==========
% ========================================

\begin{proof}
Fix \(j \in [M]\), \(t \in \aset[]{0,\ldots,T-1}\), and \(\mu^j_t \in \P(\X)\).
\edit{
Recall that \(\Pi_t\) is the set of randomized Markov decision rules at time \(t\)
\begin{align*}
\Pi_t \Let \aset[\Big]{\pi_t:\X\times\U\to[0,1]\suchthat \sum_{u\in\U}\pi_t(u|x)=1\text{ for every }x\in\X}.
\end{align*}
An element of this set assigns an action distribution \(\pi_t(\cdot|x)\in\P(\U)\) to each state \(x\in\X\). Thus \(\Pi_t\) can be identified with \(\prod_{x\in\X}\P(\U)\).
}
Since \(\X\) and \(\U\) are finite and nonempty, \(\Pi_t\) is nonempty. 
For \(\pi^j_t\in\Pi_t\), denote the next mean-field state by \(\nxtmf \Let \mu^j_tF^j_t(\pi^j_t)\), so that
\begin{align}
\X \ni y \mapsto \nxtmf(y)=\sum_{x\in \X}\sum_{u\in \U}\mu^j_t(x)\pi^j_t(u|x)f^j_t(y|x,u). \nn
\end{align}
It follows that \(\nxtmf(y)\ge 0\) for all \(y\in\X\), and 
\begin{align*}\sum_{y\in \X}\nxtmf(y)=\sum_{x\in \X}\sum_{u\in \U}\mu^j_t(x)\pi^j_t(u|x)\sum_{y\in \X}f^j_t(y|x,u)=\sum_{x\in\X}\mu^j_t(x)\sum_{u\in\U}\pi^j_t(u|x)=1.\end{align*}
Hence, \(\nxtmf\in\P(\X)\), and thus \(\reachset\neq\varnothing\).

Finiteness of \(\X\) and \(\U\) implies that \(\Pi_t\) is a compact and convex polytope. 
The map \(\Pi_t \ni \pi^j_t\mapsto\mu^j_tF^j_t(\pi^j_t) \in \P(\X)\) is continuous and affine.
Therefore, its image \(\reachset\) is compact \cite[Ch. 3]{ref:Rud-Analysis} and convex \cite[Sec. 2]{ref:SB:LV-ConvOpt}.

We now show that \(\reachset\) has the representation \eqref{eq:adm:reach:set} and it admits a polyhedral structure. 
Let \(\nxtmf\in\reachset\).
The following holds:
\begin{enumerate}[label=\(\circ\),leftmargin=*,widest=ii,align=left]

\item By definition, there exists \(\pi^j_t\in\Pi_t\) such that \(\nxtmf=\mu^j_tF^j_t(\pi^j_t)\).

\item Define \(\eta:\X\times\U\to\mathbb R_+\) by \((x,u) \mapsto \eta(x,u)\Let \mu^j_t(x)\pi^j_t(u|x)\). 
Then, \(\sum_{u\in\U}\eta(x,u)=\mu^j_t(x)\sum_{u\in\U}\pi^j_t(u|x)=\mu^j_t(x)\) for all \(x\in\X\), and \(\nxtmf(y)=\sum_{x\in\X}\sum_{u\in\U}\eta(x,u)f^j_t(y|x,u)\) for all \(y\in\X\). 

\item Conversely, suppose that \(\eta(\cdot)\) satisfies \(\sum_{u\in\U}\eta(x,u)=\mu^j_t(x)\) for all \(x\in\X\). 
If \(\mu^j_t(x)>0\), define \(\pi^j_t(u|x)\Let \eta(x,u)/\mu^j_t(x)\) for \(u\in\U\).
If \(\mu^j_t(x)=0\), then \(\sum_{u\in\U}\eta(x,u)=0\), and since \(\eta(x,u)\ge0\), we have \(\eta(x,u)=0\) for all \(u\in\U\); in this case choose \(\pi^j_t(\cdot|x)\) arbitrarily in \(\P(\U)\).
Thus, \(\pi^j_t\in\Pi_t\). 
Moreover, for every \(y\in\X\), 
\begin{align*}
\nxtmf(y)=\sum_{x\in\X}\sum_{u\in\U}\eta(x,u)f^j_t(y|x,u)=\sum_{x\in\X}\sum_{u\in\U}\mu^j_t(x)\pi^j_t(u|x)f^j_t(y|x,u),
\end{align*}
and hence \(\nxtmf=\mu^j_tF^j_t(\pi^j_t)\). 
\end{enumerate}
Therefore, \(\nxtmf\in\reachset\), proving \eqref{eq:adm:reach:set}.
Finally, note that, the feasible set for \(\eta\) in \eqref{eq:adm:reach:set} is described by finitely many linear equalities and inequalities.
It is also bounded, since 
\begin{align*}
0\le\eta(x,u)\le\sum_{u'\in\U}\eta(x,u')=\mu^j_t(x)\le 1. 
\end{align*}
Hence, it is a polytope. The map \(\eta\mapsto \sum_{x\in\X}\sum_{u\in\U}\eta(x,u)f^j_t(y|x,u)\) is linear.
Therefore, \(\reachset\) is a linear image of a polytope, and hence is itself a polytope.
Our proof is complete.
\end{proof}

\section{Ambiguity set}\label{sec:amb:set:appen}

Recall the definition of the ambiguity set from Section \ref{subsec:risk:aversion}
\begin{align}\label{eq:ambiguity:set:appen}
\mathcal{W}_j(\mu,\eps)\Let \left\{
    \begin{gathered}
    \{\hat\mu_t^{k}\}_{t=0}^T, \\
    \forall k\in [M],\ k\neq j
    \end{gathered}
    \;\middle\vert\;
    \begin{array}{@{}l@{}}
    \hat\mu_0^{k}=\mu_0^{k}, \,
    \hat\mu_{t+1}^{k}\in \mathcal{R}_t^k(\hat\mu_t^k), \\
    \dtv{\mu_t^k,\hat\mu_t^k}\le \eps 
    \end{array}
    \right\}.
\end{align}

% ========================================
% ========= Prop: ambiguity set ==========
% ========================================

We now restate and provide a detailed proof of Proposition \ref{prop:ambiguity:set} from Section \ref{subsec:risk:aversion}.

\begin{myOCP}
\begin{proposition}[Structure of the ambiguity set]
\label{prop:ambiguity:set:appen}
Fix \(j\in[M]\), a nominal flow \(\mu \Let \{\mu_t\}_{t=0}^T\) with  \(\mu_t \Let (\mu_t^1,\ldots,\mu_t^M)\in \P(\X)^M,\) and let \(\eps \in [0,1]\). 
Then: 
\begin{enumerate}[label=\textup{(\ref{prop:ambiguity:set}-\alph*)}, leftmargin=*, widest=b, align=left]

\item \(\ambset(\mu,\eps)\) is a compact polyhedral subset of
\(\prod_{k\neq j}\prod_{t=0}^T \P(\X)\).

\end{enumerate}

\noindent
If, in addition, for every \(k\neq j\), the nominal flow
\(\mu^k=\{\mu_t^k\}_{t=0}^T\) is \emph{dynamically feasible}, i.e., there exists
a policy sequence \(\{\pi_t^k\}_{t=0}^{T-1}\) such that
\(\mu_{t+1}^k=\mu_t^kF_t^k(\pi_t^k)\) for \(t=0,\ldots,T-1\), then,

\begin{enumerate}[label=\textup{(\ref{prop:ambiguity:set}-\alph*)}, leftmargin=*, widest=b, align=left]
\setcounter{enumi}{1}

\item \(\ambset(\mu,\eps)\neq\varnothing\). In that case,
\(\ambset(\mu,\eps)\) is a nonempty compact polytope.

\end{enumerate}
\end{proposition}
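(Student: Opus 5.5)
The plan is to lift everything to a finite-dimensional linear description by introducing auxiliary state--action variables, exactly as in Lemma~\ref{prop:reach:set:properties:appen}\ref{prop:reach:set:properties:II}. By that representation, the condition \(\hat\mu_{t+1}^k\in\R_t^k(\hat\mu_t^k)\) holds if and only if there exists \(\eta_t^k:\X\times\U\to\Real_+\) with
\[
\sum_{u}\eta_t^k(x,u)=\hat\mu_t^k(x) \quad\text{and}\quad \hat\mu_{t+1}^k(y)=\sum_{x,u}\eta_t^k(x,u)f_t^k(y|x,u).
\]
The key observation is that this is the same as the occupation-measure constraints \eqref{occ:constraint:1}--\eqref{occ:constraint:2}. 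Consequently, the set of dynamically feasible flows for population \(k\) coincides with the projection \(\xi(\Occ^k)\).

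Next I handle the total-variation constraint. Since \(\X\) is finite, \(\dtv{\mu_t^k,\hat\mu_t^k}\le\eps\) is equivalent to \(\tfrac12\sum_x|\mu_t^k(x)-\hat\mu_t^k(x)|\le\eps\) (or without the \(\tfrac12\), depending on convention). This constraint is polyhedral: I introduce slack variables \(s_t^k(x)\ge\pm(\mu_t^k(x)-\hat\mu_t^k(x))\) with \(\sum_x s_t^k(x)\le 2\eps\). Equivalently, it is the intersection of the finitely many half-spaces \(\sum_x \sigma(x)(\hat\mu_t^k(x)-\mu_t^k(x))\le 2\eps\), one for each sign vector \(\sigma\in\{\pm1\}^{\X}\); this second form avoids slacks entirely.

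With these two reductions, \(\ambset(\mu,\eps)\) is the coordinate projection onto the \(\hat\mu\)-variables of the set \(P\) of tuples \((\hat\mu^k,\eta^k)_{k\neq j}\) satisfying:
\begin{itemize}
\item the linear equalities \(\hat\mu_0^k=\mu_0^k\), the marginal constraint, and the transition constraint;
\item the nonnegativity \(\eta\ge0\);
\item the finitely many TV half-space inequalities.
\end{itemize}
Thus \(P\) is a polyhedron. It is bounded because \(\hat\mu_t^k\in\P(\X)\) by induction (the same computation as in the proof of Lemma~\ref{prop:reach:set:properties:appen}) and \(0\le\eta_t^k(x,u)\le\hat\mu_t^k(x)\le1\). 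Hence \(P\) is a polytope, and its linear image \(\ambset(\mu,\eps)\) is a polytope by the standard fact that linear images of polytopes are polytopes (Minkowski--Weyl). In particular it is compact, polyhedral, and contained in \(\prod_{k\neq j}\prod_t\P(\X)\). This gives part (a); the empty set counts as a trivially compact polyhedron, which covers the case where the constraints are inconsistent.

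For part (b), under dynamic feasibility of the nominal \(\mu^k\), I take \(\hat\mu^k\Let\mu^k\) for every \(k\neq j\). Then \(\hat\mu_0^k=\mu_0^k\), and \(\hat\mu_{t+1}^k=\mu_t^kF_t^k(\pi_t^k)\in\R_t^k(\hat\mu_t^k)\) by Definition~\ref{def:reachable-set:appen}. The TV distance is \(0\le\eps\), so \(\mu^{-j}\in\ambset(\mu,\eps)\) and the set is nonempty. I expect no serious obstacle. The only delicate point is the projection step, that is, ensuring that the existential quantifier over \(\eta\) preserves polyhedrality; I will handle it by explicitly invoking Fourier--Motzkin elimination, or equivalently the statement that the projection of a polytope is a polytope, rather than trying to write facets of \(\ambset\) directly.
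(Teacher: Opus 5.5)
Your proposal is correct and follows essentially the same route as the paper: lift to the state--action variables $\eta$ from the reachable-set lemma, encode the total-variation constraint with slack variables, observe that the lifted set is a bounded polyhedron, project onto the $\hat\mu$ coordinates, and obtain nonemptiness by taking $\hat\mu^{-j}=\mu^{-j}$. The differences are cosmetic: you derive the simplex constraints on $\hat\mu_t^k$ by induction instead of imposing them explicitly, and you also give a slack-free description of the total-variation ball via sign-vector half-spaces.
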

\end{myOCP}

\begin{proof}
We first show that \(\ambset(\mu,\eps)\) is compact and admits a polyhedral nature. 
By Lemma \ref{prop:reach:set:properties:appen}, for each \(k \neq j\) and \(t \in \aset[]{0,\ldots,T-1}\), the reachability condition \(\muhat{t+1} \in \R_t^k\bigl(\muhat{t}\bigr)\) is equivalent to the existence of the mapping \(\eta_t^k : \X \times \U \to \mathbb{R}_{+}\)
such that \(\sum_{u \in \U}\eta_t^k(x,u) = \muhat{t}(x)\) for all \(x \in \X\) and \(\muhat{t+1}(y) = \sum_{x \in X} \sum_{u \in \U} \eta_t^k(x,u)f_t^k(y|x,u)\) for all \(y \in \X\).
To show compactness of \(\ambset(\mu,\eps)\), we first construct a larger set which we will show is compact. Then, we obtain \(\ambset(\mu,\eps)\) as a continuous projection. 

To this end, let \(\ambsetnew(\mu,\eps)\) be the set of all variables \((\hat\mu^{-j},\eta^{-j},s^{-j})\) satisfying the following finite family of constraints
\begin{align}
&\hat\mu_t^k(x)\ge 0,\quad \sum_{x\in\X}\hat\mu_t^k(x)=1,\quad k\neq j,\ t=0,\ldots,T \text{ for }x\in\X,\,\,  \hat\mu_0^k=\mu_0^k \text{ for }k\neq j,\nn\\
% comes from the reachability constraints
&\eta_t^k(x,u)\ge 0,\quad \sum_{u\in\U}\eta_t^k(x,u)=\hat\mu_t^k(x) \text{ for }k\neq j,\ t=0,\ldots,T-1,\ x\in\X, \text{ and }u\in\U, \nn\\
% comes from the reachability constraints
&\hat\mu_{t+1}^k(y)=\sum_{x\in\X}\sum_{u\in\U}\eta_t^k(x,u)f_t^k(y|x,u) \text{ for } k\neq j,\ t=0,\ldots,T-1, \text{ and }y\in\X, \nn\\
% comes from the reachability constraints
&s_t^k(x)\ge 0,\,\, s_t^k(x)\ge \hat\mu_t^k(x)-\mu_t^k(x),\,\,s_t^k(x)\ge \mu_t^k(x)-\hat\mu_t^k(x), 
\sum_{x\in\X} \tfrac{s_t^k(x)}{2}\le\eps, \nn
% comes from the TV constraints 
\end{align}
for \(k\neq j\), \(t=0,\ldots,T\), and \(x\in\X\).
The auxiliary variables \(x \mapsto s_t^k(x)\) encode the total-variation constraints.
Indeed, if \(\dtv{\mu_t^k,\hat\mu_t^k}\le\eps\), one may choose \(x \mapsto s_t^k(x) \Let \abs{\mu_t^k(x)-\hat\mu_t^k(x)}\). 
Conversely, if such \(s_t^k(x)\) exist, then \(\abs{\mu_t^k(x)-\hat\mu_t^k(x)}\le s_t^k(x)\) for every \(x\in\X\), and therefore
\begin{align*}
\dtv{\mu_t^k,\hat\mu_t^k}=\frac{1}{2}\sum_{x\in\X}\abs{\mu_t^k(x) -\hat\mu_t^k(x)}\le\frac{1}{2}\sum_{x\in\X}s_t^k(x)\le\eps.
\end{align*}
All constraints defining \(\ambsetnew(\mu,\eps)\) are linear equalities or linear inequalities in the finite-dimensional variables \((\hat\mu^{-j},\eta^{-j},s^{-j})\).
Therefore \(\ambsetnew(\mu,\eps)\) is a polyhedron.
Moreover, \(\ambsetnew(\mu,\eps)\) is bounded. Indeed, \(0\le \hat\mu_t^k(x)\le1\), \(0\le \eta_t^k(x,u)\le\sum_{u'\in\U}\eta_t^k(x,u')=\hat\mu_t^k(x)\le1\), and \(0\le s_t^k(x)\le2\eps\).
\newedit{Hence,} \(\ambsetnew(\mu,\eps)\) is a bounded polyhedron, and therefore compact. If it is nonempty, it is a polytope.

%%% showing W is compact %%%%%
By Lemma~\ref{prop:reach:set:properties:appen} and the auxiliary-variable representation of the total-variation constraints, \(\ambset(\mu,\eps)\) is the projection of \(\ambsetnew(\mu,\eps)\) onto the variables \((\hat\mu^{-j},\eta^{-j},s^{-j}) \mapsto \hat\mu^{-j}\).
Since projection is a linear (and thus continuous) map, the projection of a compact set is compact. 
Moreover, the projection of a polyhedron is polyhedral.
\newedit{Therefore,} \(\ambset(\mu,\eps)\) is a compact polyhedral subset of \(\prod_{k\neq j}\prod_{t=0}^T \P(\X)\).

%%%% nonemptyness %%%%%%
We now prove the non-emptiness of \(\ambset(\mu,\eps)\).
Suppose that, for every \(k\neq j\), there exists a policy sequence \(\{\pi_t^k\}_{t=0}^{T-1}\) such that \(\mu_{t+1}^k=\mu_t^kF_t^k(\pi_t^k)\) for \(t=0,\ldots,T-1\). Define \(\hat\mu_t^k\Let \mu_t^k\) for \(k\neq j\) and \(t=0,\ldots,T\).
Then \(\hat\mu_0^k=\mu_0^k\), and \(\hat\mu_{t+1}^k=\mu_{t+1}^k=\mu_t^kF_t^k(\pi_t^k)=\hat\mu_t^kF_t^k(\pi_t^k)\), so \(\hat\mu_{t+1}^k\in\R_t^k(\hat\mu_t^k)\).
Finally,
\begin{align}
  \dtv{\mu_t^k,\hat\mu_t^k}=\dtv{\mu_t^k,\mu_t^k}=0\le\eps. \nn  
\end{align}
Therefore \(\hat\mu^{-j}=\mu^{-j}\) belongs to \(\ambset(\mu,\eps)\), and hence \(\ambset(\mu,\eps)\neq\emptyset\).
Since \(\ambset(\mu,\eps)\) has already been shown to be compact and polyhedral, it follows in the nonempty case that \(\ambset(\mu,\eps)\) is a nonempty compact polytope.
\end{proof}

\section{Proof of Theorem \ref{thm:existence:RA:MP:MFE}}\label{sec:existence:normal:appen}

In this section we provide detailed proof of Theorem \ref{thm:existence:RA:MP:MFE}; we restate it for convinience.
% ========================================
% ========= Thrm: Existence Eq. ==========
% ========================================

\begin{myOCP}
\begin{theorem}[Existence of RA-MP-MFE]\label{thm:existence:RA:MP:MFE:appen}
Recall the problem data from Section \ref{sec:prob:form} and let \(\mu_0^j\in\P(\X)\) be fixed for all \(j\in[M]\). 
Let Assumption \ref{assmpt:lipschitiz-model} hold.  Then, for every fixed \(\eps \in \lcrc{0}{1}\), there exists a RA-MP-MFE \((\pi^\star,\mu^\star)\in\Pi\times\M\). 
More precisely, there exists \(\pi^\star\) such that, \(\mu^\star \Let \B_\prop(\pi^\star)\), for every \(j\in[M]\), and
\begin{equation}\label{eq:existence:RA:MP:MFE}
\pi^{j,\star}\in \argmax_{\pi^j\in\Pi^j}\inf_{\widehat\mu^{-j}\in\ambset(\mu^\star,\eps)}\E^{\pi^j}\left[\sum_{t=0}^{T}r_t^j\bigl(x_t^j,\mu_t^{j,\star},\widehat\mu_t^{-j}\bigr)\right].  
\end{equation}
\end{theorem}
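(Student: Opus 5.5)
We will prove existence with Kakutani's fixed-point theorem applied on the joint occupation-measure space \(\Occ\), not on policies. The risk-averse payoff is concave in the agent's own occupation measure, while it is not in general concave in \(\pi^j\).

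\textbf{Step 1: the setting.} By Remark~\ref{rem:policy:occ:equivalence}, each \(\Occ^j\) is a nonempty compact convex polytope. It is cut out by the finitely many linear constraints \eqref{occ:constraint:1}--\eqref{occ:constraint:2}, and it is bounded because every entry lies in \([0,1]\). Hence \(\Occ\) is nonempty, compact and convex in a finite-dimensional space.

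For \(o\in\Occ\), the flow \(\xi(o)\) is dynamically feasible. Proposition~\ref{prop:ambiguity:set:appen}(b) therefore shows that \(\ambset(\xi(o),\eps)\) is a nonempty compact polytope.

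Next we verify the reformulation \eqref{eq:occ:phi}. For \(\tilde o^j\) induced by \(\pi^j\), the expectation equals
\[
\sum_{t,x}\tilde\xi_t^j(x)\,r_t^j\bigl(x,\xi_t^j,\widehat\mu_t^{-j}\bigr),
\]
so \(\bar{\objective}_j(\pi^j;\xi(o))=\Phi_j(\tilde o^j;o)\). Define the correspondence \(\bestresp(o)\Let\prod_j\argmax_{\tilde o^j\in\Occ^j}\Phi_j(\tilde o^j;o)\).

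A fixed point \(o^\star\in\bestresp(o^\star)\) yields the equilibrium, via the policy \(\pi^\star\) built from \eqref{eq:markov:policy:old}. Indeed \(\B_\prop(\pi^\star)=\xi(o^\star)=:\mu^\star\). Moreover, every \(\pi^j\) corresponds to some \(\tilde o^j\in\Occ^j\), so maximizing over \(\Occ^j\) is the same as maximizing over \(\Pi^j\); this gives \eqref{eq:existence:RA:MP:MFE}.

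\textbf{Step 2: convex values.} For fixed \(o\), the map \(\tilde o^j\mapsto\Phi_j(\tilde o^j;o)\) is a pointwise infimum of functions that are linear in \(\tilde\xi^j\). It is therefore concave, and finite because rewards are bounded. Consequently its argmax over the compact convex set \(\Occ^j\) is nonempty and convex, provided the function is upper semicontinuous.

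\textbf{Step 3: closed graph.} It suffices to show that \((\tilde o^j,o)\mapsto\Phi_j(\tilde o^j;o)\) is jointly continuous. Berge's maximum theorem then gives an upper hemicontinuous correspondence with nonempty compact values, hence a closed graph.

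Write \(\Phi_j\) as \(\inf_{w\in\ambset(\xi(o),\eps)}g(\tilde o^j,o,w)\). Here \(g\) is continuous: it is bilinear in \(\tilde\xi^j\) times a reward, and the reward is Lipschitz by Assumption~\ref{assmpt:lipschitiz-model}. Berge's theorem applied to this minimization then reduces the question to continuity, in the sense of both upper and lower hemicontinuity, of the constraint correspondence \(o\rightrightarrows\ambset(\xi(o),\eps)\).

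\textbf{Main obstacle: continuity of the ambiguity set.} Upper hemicontinuity is easy. The graph is closed, since all constraints are closed conditions that are jointly continuous in \((\mu,\hat\mu,\eta)\), and the correspondence takes values in a compact set.

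Lower hemicontinuity is the delicate part, because the TV constraint can become tight. We plan a convexity argument. Take \(o_n\to o\) and \(w\in\ambset(\xi(o),\eps)\), and write \(\mu\Let\xi(o)\), \(\mu_n\Let\xi(o_n)\). For \(\lambda\in(0,1)\), set \(w_\lambda\Let(1-\lambda)w+\lambda\mu^{-j}\).

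\begin{enumerate}
\item \(w_\lambda\) is dynamically feasible. The reachability constraints in lifted form are linear with the same initial condition, so any convex combination of feasible flows is feasible.
\item \(w_\lambda\) satisfies the TV constraint with slack: \(\dtv{\mu_t^k,w_{\lambda,t}^k}\le(1-\lambda)\eps\).
\item \(w_\lambda\) must be adjusted to the moving centre. We consider \(w_{\lambda,n}\Let w_\lambda+\lambda(\mu_n^{-j}-\mu^{-j})\), that is, \((1-\lambda)w+\lambda\mu_n^{-j}\). This is feasible for all large \(n\), since its TV distance to \(\mu_n\) is at most \((1-\lambda)\bigl(\eps+\dtv{\mu,\mu_n}\bigr)\le\eps\) eventually.
\end{enumerate}

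A diagonal choice \(\lambda_n\to0\) then yields \(w_n\to w\) with \(w_n\in\ambset(\mu_n,\eps)\), which is lower hemicontinuity.

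The case \(\eps=0\) must be handled separately. There the ambiguity set is the singleton \(\{\mu^{-j}\}\), which is trivially continuous. For \(\eps>0\), the argument above works, and \(\eps=1\) is covered as well.

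\textbf{Step 4: conclusion.} Kakutani's theorem applies to \(\bestresp:\Occ\rightrightarrows\Occ\), which has a closed graph and nonempty convex values on a nonempty compact convex set. It gives a fixed point \(o^\star\), and Step 1 converts it into the RA-MP-MFE \((\pi^\star,\mu^\star)\).
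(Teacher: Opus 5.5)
Your proposal is correct and follows essentially the same route as the paper. Both arguments reformulate the problem in occupation measures and get concavity of \(\Phi_j(\cdot;o)\) as an infimum of affine maps. Both establish continuity of \(o\rightrightarrows\ambset(\xi(o),\eps)\): upper hemicontinuity from a closed graph plus compactness, and lower hemicontinuity from the interpolation \((1-\lambda_n)w+\lambda_n\mu_n^{-j}\) toward the moving nominal flow, with \(\eps=0\) treated separately. Both then apply Berge's theorem, Kakutani on \(\Occ\), and policy reconstruction. The only cosmetic difference is the choice of \(\lambda_n\): you pick it via the triangle inequality on \(\dtv{\mu,\mu_n}\), whereas the paper sets \(\lambda_n=1-\eps/\Delta_n\) directly from \(\Delta_n=\max_{k,t}\dtv{w_t^k,\mu_{n,t}^k}\).
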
    
\end{myOCP}

\begin{remark}[Proof roadmap for Theorem 1]\label{rem:proof:roadmap:RA:MP:MFE}
The main ingredient of our proof consists of techniques from set-valued and variational analysis \cite{ref:AliBor-06,ref:ZeidlerFixed-86,ref:DonRoc-14}. Our proof for Theorem \ref{thm:existence:RA:MP:MFE} roughly proceeds via the following steps (see also Fig.~\ref{fig:proof:raodmap}): \textbf{(a)} First, for each population, we introduce the finite-horizon occupation-measure set and use the equivalence between Markov policies and occupation measures to reformulate the risk-averse best-response problem on a nonempty compact convex set.
%%%%%%%% a TikZ picture %%%%%%%%%%
\begin{figure}[h]
\begin{center}
\begin{tikzpicture}[
    >={Stealth},
    box/.style={
        draw,
        thick,
        rounded corners=4pt,
        align=center,
        font=\small,
        text width=3cm,
        minimum height=1cm,
        inner sep=3pt
    }
]

\node[box, fill=blue!12,   draw=blue!60!black]   (a) {Occupation\\measures (\ref{step:one} and  \ref{step:two})};
\node[box, fill=green!12,  draw=green!50!black,  right=1.2cm of a] (b) {Payoff\\regularity (\ref{step:three}--\ref{step:seven})};

\node[box, fill=orange!18, draw=orange!70!black, below=1.2cm of a] (c) {Kakutani\\fixed point (\ref{step:eight})};
\node[box, fill=purple!12, draw=purple!60!black, right=1.2cm of c] (d) {Policy\\reconstruction (\ref{step:nine})};

\draw[->, thick] (a) -- (b);
\draw[->, thick] (b.south) to[out=-90,in=90] (c.north);
\draw[->, thick] (c) -- (d);

\end{tikzpicture}
\end{center}
\caption{A roadmap of the proof steps for Theorem \ref{thm:existence:RA:MP:MFE}.}
\label{fig:proof:raodmap}
\end{figure}
%%%%%%%% a TikZ picture %%%%%%%%%%
\textbf{(b)} The main technical step is to establish the required regularity of the risk-averse payoff. To this end, we show that the ambiguity-set correspondence has closed graph and is both upper and lower hemicontinuous. Its continuity, together with the continuity of the reward functions, allows us to apply Berge's maximum principle~\cite[Chapter 17, Theorem 17.31]{ref:AliBor-06} and conclude that the occupation-measure payoff is jointly continuous. We also show that the payoff is concave in the candidate occupation measure. These properties imply that the resulting occupation-measure best-response correspondence has nonempty, compact, and convex values and is upper hemicontinuous. \textbf{(c)} Kakutani's fixed-point theorem~\cite[Chapter 17, Corollary 17.55]{ref:AliBor-06} therefore yields a fixed occupation-measure profile. \textbf{(d)} Finally, we reconstruct Markov policies from this fixed point and verify that the resulting policy and mean-field pair satisfies both the risk-averse optimality and mean-field consistency conditions.
\end{remark}

\begin{proof}[\highlight{Proof of Theorem \ref{thm:existence:RA:MP:MFE:appen}}]
We write the proof in several steps as outlined in Remark~\ref{rem:proof:roadmap:RA:MP:MFE}.

%%%%%%%%%%%%%%%%%%%%%%%%%%%%%%%%
%%%%%%%%%% step one %%%%%%%%%%%%
%%%%%%%%%%%%%%%%%%%%%%%%%%%%%%%%

\noindent\proofstep{step:one}{Occupation measures} For each population \(j\in[M]\), let \(\Occ^j\) denote the set of occupation measures \(o^j \Let (\xi^j,\eta^j)\), where \(\xi^j\Let\{\xi_t^j\}_{t=0}^T\) and \(\eta^j\Let\{\eta_t^j\}_{t=0}^{T-1}\), satisfying the properties \eqref{occ:constraint:1}--\eqref{occ:constraint:2}.\footnote{As usual, \(\xi_t^j = \probmeas(X_t^j =x)\) and \(\eta_t^j(x,u) = \probmeas(X_t^j =x,U_t^j =u)\).}
%
% \begin{align}
% \xi_0^j=\mu_0^j,\qquad \sum_{u\in\U}\eta_t^j(x,u)=\xi_t^j(x)\quad \text{for all }\,x\in\X,\ t=0,\ldots,T-1,\label{occ:constraint:1}\\
% m_{t+1}^j(y)=\sum_{x\in\X}\sum_{u\in\U}\eta_t^j(x,u)f_t^j(y|x,u)\quad \text{for all }\,y\in\X,\ t=0,\ldots,T-1.\label{occ:constraint:2}
% \end{align}
%
By the finite-dimensional linear representation used in Lemma~\ref{prop:reach:set:properties:appen}, each \(\Occ^j\) is a nonempty compact convex polytope. Hence \(\Occ\Let\prod_{j=1}^M\Occ^j\) is a nonempty compact convex subset of a finite-dimensional Euclidean space \cite[Section 2.3]{ref:SB:LV-ConvOpt}.

%%%%%%%%%%%%%%%%%%%%%%%%%%%%%%%%
%%%%%%%%%% step two %%%%%%%%%%%%
%%%%%%%%%%%%%%%%%%%%%%%%%%%%%%%%

\noindent\proofstep{step:two}{Structural properties of sets}
We recall the standard equivalence between Markov policies and occupation measures \cite{ref:YG:AA:LH:KHJ:Dist:Reach:MDP:TAC}. 
Every policy \(\pi^j\in\Pi^j\) induces \(o^j=(\xi^j,\eta^j)\in\Occ^j\) by \(\eta_t^j(x,u)=\xi_t^j(x)\pi_t^j(u|x)\), where \(\xi^j\) is the state-flow generated by \(\pi^j\).
Conversely, every \(o^j=(\xi^j,\eta^j)\in\Occ^j\) induces a Markov policy by
\begin{equation}\label{eq:markov:policy}
\X  \ni x \mapsto \pi_t^j(u|x)\Let \frac{\eta_t^j(x,u)}{\xi_t^j(x)}\quad \text{if }\,\xi_t^j(x)>0,
\end{equation}
and by choosing \(\pi_t^j(\cdot|x)\) arbitrarily in \(\P(\U)\) if \(\xi_t^j(x)=0\). This policy generates the same occupation measure and the same state-flow \cite{ref:YG:AA:LH:KHJ:Dist:Reach:MDP:TAC}.

For \(o \Let (o^1,\ldots,o^M)\in\Occ\), denote the corresponding multi-population flow by \(\xi(o)\Let(\xi^1,\ldots,\xi^M)\).
For each \(j\in[M]\), define the payoff of a candidate occupation measure \(\tilde{o}^j \Let (\tilde{\xi}^j,\tilde{\eta}^j)\in\Occ^j\) against the nominal occupation profile \(o\) by 
\begin{equation}\label{robust:occ:payoff}
\Occ^j \times \Occ \ni \bigl(\tilde{o}^j;o\bigr) \mapsto \Phi_j(\tilde{o}^j;o)\Let \inf_{\widehat\mu^{-j}\in \ambset(\xi(o),\eps)}\sum_{t=0}^T\sum_{x\in\X}\tilde \xi_t^j(x)r_t^j(x,\xi_t^j,\widehat\mu_t^{-j}).
\end{equation}
Since \(\xi(o)\) is dynamically feasible by construction, Proposition~\ref{prop:ambiguity:set} implies that \(\mathcal{W}_j(\xi(o),\eps)\) is nonempty and compact.

%%%%%%%%%%%%%%%%%%%%%%%%%%%%%%%%
%%%%%%%%%% step three %%%%%%%%%%
%%%%%%%%%%%%%%%%%%%%%%%%%%%%%%%%

\noindent\proofstep{step:three}{Regularity of $\Phi_j : \Occ^j \times \Occ \to \mathbb{R}$} We need two properties of the function \(\Phi_j(\cdot)\):
\begin{enumerate}[leftmargin=*]
    \item the joint continuity of \(\Occ^j \times \Occ \ni \bigl(\tilde{o}^j;o\bigr) \mapsto \Phi_j(\tilde{o}^j;o) \in \Real\); and 
    \item concavity of \(\Occ^j \ni \tilde{o}^j \mapsto \Phi_j(\tilde{o}^j;o) \in \Real\) for each fixed \(o \in \Occ\). 
\end{enumerate}
We need the following ingredients to prove them.

%%%%%%%%%%%%%%%%%%%%%%%%%%%%%%%%
%%%%%%%%%% step four %%%%%%%%%%%
%%%%%%%%%%%%%%%%%%%%%%%%%%%%%%%%

\noindent\proofstep{step:four}{\(\mu \mapsto \ambset(\mu,\eps)\) has closed graph \cite[Chapter 5, Problem 5.3]{ref:EberZeidler:AFA:MathPhysc}} Let \(\feasflow\) denote the set of dynamically feasible opponent flows with the prescribed initial distributions, i.e., 
\begin{align}\label{eq:feas:flow:set}
\feasflow \Let \aset[\bigg]{\nu^{-j}=\{\nu_t^k\}_{t=0}^T \suchthat \nu_0^k=\mu_0^k,\ \nu_{t+1}^k\in R_t^k(\nu_t^k)\ \text{for all }\,k\neq j,\ t=0,\ldots,T-1}.
\end{align}
By Proposition~\ref{prop:ambiguity:set}, \(\feasflow\) is a compact convex polytope. 
Moreover, we have the representation
\begin{align}\label{eq:amb:set:feas}
\ambset(\mu,\eps)=\aset[\bigg]{\nu^{-j}\in \feasflow \suchthat \dtv{\nu_t^k,\mu_t^k}\le\eps\ \text{for all }\,k\neq j,\ t=0,\ldots,T}. 
\end{align}
 We first show that \(\mu\mapsto \ambset(\mu,\eps)\), restricted to dynamically feasible nominal flows, has closed graph.
The graph \cite[Chapter 2, Section 2.14]{ref:AliBor-06} of the set-valued map \(\ambset\) is
\begin{align*}
\graph(\ambset)\Let \aset[\big]{(\mu,\nu^{-j}) \suchthat \nu^{-j}\in \ambset(\mu,\eps)}.
\end{align*}
Since \(\ambset\) has a closed graph it follows that, whenever
\begin{equation}
(\mu_n)_{n\in \Nz} \to\mu,\,\, (\nu_n^{-j})_{n\in \Nz}\to\nu^{-j},\,\, (\nu_n^{-j})_{n\in \Nz}\in \ambset(\mu_n,\eps), \,\,\text{we have}\,\, \nu^{-j}\in \ambset(\mu,\eps). \nn
\end{equation}
To this end, let \((\mu_n)_{n\in \Nz} \to \mu\) be a convergent sequence of feasible nominal flows, and let \((\nu_n^{-j})_{n\in \Nz} \to \nu^{-j}\) with \(\nu_n^{-j} \in \mathcal{W}_j(\mu_n,\eps)\) for every \(n \in \Nz\). By \eqref{eq:amb:set:feas}, for every \(n\), we have \(\nu_n^{-j} \in \feasflow\), and since \(\feasflow\) is closed, \(\nu^{-j} \in \feasflow\) (closed sets contains all of its limit points \cite[Chapter 2]{ref:Rud-Analysis}). We also have for every \(k \neq j\) and every \(t \in \aset[]{0,\ldots,T}\), \(\dtv{\nu^k_{n,t}, \mu^k_{n,t}}\le \eps.\) Since the TV distance is continuous over the finite-dimensional simplex, passing to the limit, gives us 
\begin{align}
    \dtv{\nu_t^k,\mu_t^k} \le \eps \quad \text{for all }\, k \neq j \text{ and }\, t \in \aset[]{0,\ldots,T}. \nn
\end{align}
Thus, \(\nu^{-j} \in \ambset (\mu,\eps)\) and the graph \(\mu \mapsto \ambset (\mu,\eps)\) is closed. 

%%%%%%%%%%%%%%%%%%%%%%%%%%%%%%%%
%%%%%%%%%% step five %%%%%%%%%%%
%%%%%%%%%%%%%%%%%%%%%%%%%%%%%%%%

\noindent\proofstep{step:five}{Upper hemicontinuity} We next prove upper hemicontinuity of the correspondence \(\mu\mapsto \ambset(\mu,\eps)\) at an arbitrary dynamically feasible nominal flow \(\mu\).
We use the open-set definition of upper hemicontinuity \cite[Chapter~17, Section~17.2, Definition~17.2]{ref:AliBor-06}, together with the sequential characterization for compact-valued correspondences \cite[Chapter~17, Theorem~17.16]{ref:AliBor-06}.\footnote{We note that the context in \cite[Chapter 17]{ref:AliBor-06} is much more general compared to our setting. All the set-valued mapping there have habitats as topological space and thus the authors employ the notion of `nets', which for our finite-dimensional setting reduces to `sequences'.} 
In the present finite-dimensional setting, it is enough to show that whenever \((\mu_n)_{n \in \Nz}\to\mu\) and \(\nu_n^{-j}\in\ambset(\mu_n,\eps)\) for all \(n\in \Nz\), the sequence \((\nu_n^{-j})_{n \in \Nz}\) has a subsequence converging to some element of \(\ambset(\mu,\eps)\).

Fix an open set \(O\) containing \(\ambset(\mu,\eps)\). Suppose, by contradiction, that \(\mu\mapsto \ambset(\mu,\eps)\) is not upper hemicontinuous at \(\mu\).
Then, there exist a sequence \((\mu_n)_{n \in \Nz}\to\mu\) of dynamically feasible nominal flows and elements \(\nu_n^{-j}\in \ambset(\mu_n,\eps)\) such that
\begin{align*}
\Nz \ni n \mapsto \nu_n^{-j}\notin O.
\end{align*}
Since \(\ambset(\mu_n,\eps)\subseteq\feasflow\) for every \(n\), and since \(\feasflow\) is compact, there exists a subsequence, still denoted by \((\nu_n^{-j})_{n \in \Nz}\), and some \(\nu^{-j}\in\feasflow\), such that \((\nu_n^{-j})_{n \in \Nz}\to\nu^{-j}.\)

By the closed-graph property proved in \ref{step:four}, we see that \(\nu^{-j}\in \ambset(\mu,\eps)\). 
Since \(\ambset(\mu,\eps)\subset O\) and \(O\) is open, we have \(\nu^{-j}\in O\), and hence \(\nu_n^{-j}\in O\) for all sufficiently large \(n\). This contradicts the fact that \(\Nz \ni n \mapsto \nu_n^{-j}\notin O\).
Therefore, \(\mu\mapsto \ambset(\mu,\eps)\) is upper hemicontinuous at \(\mu\).

%%%%%%%%%%%%%%%%%%%%%%%%%%%%%%%%
%%%%%%%%%% step seven %%%%%%%%%%
%%%%%%%%%%%%%%%%%%%%%%%%%%%%%%%%

\noindent\proofstep{step:six}{Lower hemicontinuity} We now prove lower hemicontinuity of the correspondence \(\mu\mapsto \ambset(\mu,\eps)\) \cite[Chapter~17, Section~17.2, Definition~17.2]{ref:AliBor-06}. 
By the sequential characterization of lower hemicontinuity \cite[Theorem~17.19]{ref:AliBor-06}, it suffices to show that whenever \((\mu_n)_{n\in \Nz}\to\mu\) and \(\nu^{-j}\in \ambset(\mu,\eps)\), there exist a subsequence \(\bigl(\mu_{n_\ell}\bigr)_{\ell \in \Nz}\) and points \(\nu_{n_\ell}^{-j}\in \ambset(\mu_{n_\ell},\eps)\) such that \(\bigl(\nu_{n_\ell}^{-j}\bigr)_{\ell \in \Nz}\to\nu^{-j}\).
In fact, we prove the stronger statement by constructing such a sequence along the whole sequence \((\mu_n)_{n\in \Nz}\).
Let \((\mu_n)_{n\in \Nz}\to\mu\) be dynamically feasible nominal flows, and fix \(\nu^{-j}\in \ambset(\mu,\eps)\). 
We construct \(\nu_n^{-j}\in \ambset(\mu_n,\eps)\) such that \((\nu_n^{-j})_{n\in \Nz}\to\nu^{-j}\).
If \(\eps=0\), then \(d_{\TV}(\nu_t^k,\mu_t^k)=0\) for all \(k\neq j\) and \(t=0,\ldots,T\), so \(\nu^{-j}=\mu^{-j}\). Since each \((\mu_n)_{n\in \Nz}\) is dynamically feasible, \(\mu_n^{-j}\in \feasflow\). Thus, choosing \(\nu_n^{-j}\Let \mu_n^{-j}\) gives \(\nu_n^{-j}\in \ambset(\mu_n,0)\) and \((\nu_n^{-j})_{n\in \Nz}\to\nu^{-j}\).

Now suppose \(\eps>0\). Define
\begin{align*}
\Nz \ni n \mapsto \Delta_n \Let \max_{k\neq j,\ 0\le t\le T}d_{\TV}(\nu_t^k,\mu_{n,t}^k).
\end{align*}
Since \((\mu_n)_{n\in \Nz}\to\mu\) and \(\nu^{-j}\in \ambset(\mu,\eps)\), we have \(\limsup_{n\to+\infty}\Delta_n\le\eps\).
If \(\Delta_n\le\eps\), set \(\lambda_n\Let 0\). If \(\Delta_n>\eps\), set \(\lambda_n\Let 1-\eps/\Delta_n\). Then, \(0\le\lambda_n<1\) and \(\lambda_n\to 0\) for all \(n\in \Nz\). 
We now employ an interpolation argument which in spirit is analogous to the convex-mixture construction in \cite[Proposition 3.1]{ref:neufeld2023markov}. To this end, define the sequence
\begin{align*}
\Nz \ni n \mapsto \nu_n^{-j} \Let (1-\lambda_n)\nu^{-j}+\lambda_n\mu_n^{-j}.
\end{align*}
Because \(\feasflow\) is convex and both \(\nu^{-j}\) and \(\mu_n^{-j}\) belong to \(\feasflow\), we have \(\nu_n^{-j}\in \feasflow\).
Moreover, for every \(k\neq j\) and \(t=0,\ldots,T\),
\begin{align*}
d_{\TV}(\nu_{n,t}^k,\mu_{n,t}^k)
&=d_{\TV}\bigl((1-\lambda_n)\nu_t^k+\lambda_n\mu_{n,t}^k,\mu_{n,t}^k\bigr)\\
&=(1-\lambda_n)d_{\TV}(\nu_t^k,\mu_{n,t}^k)\le (1-\lambda_n)\Delta_n \le\eps.
\end{align*}
Hence, \(\nu_n^{-j}\in \ambset(\mu_n,\eps)\). Moreover, we know that, \(\nu_n^{-j}-\nu^{-j}=\lambda_n(\mu_n^{-j}-\nu^{-j}).\)
Since \(\mu_n\to\mu\), the sequence \(\{\mu_n^{-j}\}_{n\in\mathbb{N}}\) is bounded. 
Therefore, because \(\lambda_n\to 0\), we get \(\nu_n^{-j}\to\nu^{-j}.\) 
Thus, for every sequence \((\mu_n)_{n\in \Nz}\to\mu\) and every \(\nu^{-j}\in\ambset(\mu,\eps)\), we have constructed \(\nu_n^{-j}\in\ambset(\mu_n,\eps)\) such that \((\nu_n^{-j})_{n\in \Nz}\to\nu^{-j}\).
Hence, \(\mu\mapsto\ambset(\mu,\eps)\) is lower hemicontinuous.

%%%%%%%%%%%%%%%%%%%%%%%%%%%%%%%%
%%%%%%%%%% step eight %%%%%%%%%%
%%%%%%%%%%%%%%%%%%%%%%%%%%%%%%%%

\noindent\proofstep{step:seven}{Continuity of \(\Phi_j(\cdot)\)} Combining upper hemicontinuity and lower hemicontinuity, the correspondence \(\mu\mapsto\ambset(\mu,\eps)\) is continuous \cite[Chapter 17, Section 17.2, Definition 17.2]{ref:AliBor-06}. Moreover, by Proposition~\ref{prop:ambiguity:set}, \(\ambset(\mu,\eps)\) is nonempty and compact for every dynamically feasible nominal flow \(\mu\). 

We now have the required ingredients; let us return to the regularity of \(\Phi_j(\cdot)\), as asserted in \ref{step:three}.
Recall the definitions of \(\tilde{o}^j\) and \(o\) and define
\begin{align}\label{eq:g:obj:func}
\bigl(\tilde{o}^j,o,\widehat\mu^{-j}\bigr) \mapsto \mathcal{G}_j\bigl(\tilde{o}^j,o,\widehat\mu^{-j}\bigr) \Let
\sum_{t=0}^T\sum_{x\in\X}\tilde \xi_t^j(x)r_t^j(x,\xi_t^j,\widehat\mu_t^{-j}).
\end{align}
Since the reward functions are Lipschitz continuous in the mean-field argument and the horizon and state space are finite, \((\tilde{o}^j,o,\widehat\mu^{-j}) \mapsto \mathcal{G}_j\bigl(\tilde{o}^j,o,\widehat\mu^{-j}\bigr)\) is continuous. Moreover,
\begin{align*}
\Phi_j(\tilde{o}^j;o) = \min_{\widehat\mu^{-j}\in\ambset(\xi(o),\eps)}
\mathcal{G}_j(\tilde{o}^j,o,\widehat\mu^{-j}),
\end{align*}
where the `infimum' is a `min' because \(\ambset(\xi(o),\eps)\) is compact and \(\mathcal{G}_j(\cdot)\) is continuous. 
Since \(o\mapsto \xi(o)\) is continuous and \(\mu\mapsto\ambset(\mu,\eps)\) is a nonempty compact-valued continuous correspondence, Berge's maximum theorem implies that \(\Phi_j(\cdot)\) is continuous on \(\Occ^j\times\Occ\); see \cite[Chapter 17]{ref:AliBor-06}.

%%%%%%%%%%%%%%%%%%%%%%%%%%%%%%%%
%%%%%%%%%% step nenw %%%%%%%%%%%
%%%%%%%%%%%%%%%%%%%%%%%%%%%%%%%%

\noindent\proofstep{step:eight}{Existence via the Kakutani fixed-point theorem} We first show that, for a fixed \(o \in \Occ\), the mapping \(\tilde{o}^j \mapsto \Phi_j(\tilde{o}^j;o)\) is concave.
Indeed, for every fixed \(\widehat{\mu}^{-j} \in \ambset(\xi(o),\eps)\), the map \(\tilde{o}^j \mapsto \mathcal{G}\bigl(\tilde{o}^j ,o,\widehat{\mu}^{-j}\bigr)\) is affine in \(\tilde{\xi}^j\). 
Thus, \(\Phi_j(\cdot;o)\), being the pointwise infimum of affine mappings, is concave \cite{ref:Roc:ConvAnal-70}. 

Define the \emph{best response} correspondence \(\bestresp_j: \Occ \rightrightarrows \Occ^j\) by
\begin{align*}
\bestresp_j(o)\Let \argmax_{\tilde{o}^j\in\Occ^j}\Phi_j(\tilde{o}^j;o).
\end{align*}
Since \(\Occ^j\) is compact and \(\Phi_j(\cdot)\) is continuous, Weierstrass' theorem implies that \(\bestresp_j(o)\) is nonempty and compact for every \(o\in\Occ\).
Since \(\Phi_j(\cdot;o)\) is concave and \(\Occ^j\) is convex, the set \(\bestresp_j(o)\) is convex. 
Moreover, since the feasible set \(\Occ^j\) is fixed and compact and the objective \(\Phi_j(\cdot)\) is continuous, Berge's maximum theorem implies that \(\bestresp_j\) is upper hemicontinuous; see \cite[Chapter~17]{ref:AliBor-06}.

Define the product correspondence \(\bestresp:\Occ\rightrightarrows\Occ\) by \(\bestresp(o)\Let\prod_{j=1}^M\bestresp_j(o)\). 
Because \(M\) is finite, \(\bestresp\) has nonempty, compact, convex values and is upper hemicontinuous.
Since \(\Occ\) is a nonempty compact convex subset of a finite-dimensional Euclidean space, Kakutani's fixed-point theorem \cite[Chapter 17, Corollary 17.55]{ref:AliBor-06} implies that there exists \(o^\star\in\Occ\) such that \(o^\star\in\BR(o^\star).\)

%%%%%%%%%%%%%%%%%%%%%%%%%%%%%%%%
%%%%%%%%%% step ten %%%%%%%%%%%%
%%%%%%%%%%%%%%%%%%%%%%%%%%%%%%%%

\noindent\proofstep{step:nine}{Consistency and optimality} Finally, we verify the conditions (i) and (ii) in Definition \ref{def:RA:MP:MFE}:
\begin{enumerate}[leftmargin=*, label=\(\circ\)]
\item Let \(o^{j,\star} \Let (\xi^{j,\star},\eta^{j,\star})\), and define \(\mu^\star\Let(\xi^{1,\star},\ldots,\xi^{M,\star})\). 
Following \eqref{eq:markov:policy}, construct \(\pi^{j,\star}\in\Pi^j\) from \(o^{j,\star}\) by setting \(\pi_t^{j,\star}(u|x)\Let \eta_t^{j,\star}(x,u)/\xi_t^{j,\star}(x)\) whenever \(\xi_t^{j,\star}(x)>0\), and choosing \(\pi_t^{j,\star}(\cdot|x)\) arbitrarily in \(\P(\U)\) whenever \(\xi_t^{j,\star}(x)=0\). 
By construction, this policy generates the state-flow \(\xi^{j,\star}\). Hence, we have \(\mu^{j,\star}=\B_\prop^j(\pi^{j,\star})\) for all \(j\in[M]\), and thus the condition in Definition~\ref{def:RA:MP:MFE} holds.

\item Fix \(j\in[M]\) and take any \(\pi^j\in\Pi^j\).
Let \(\tilde{o}^j\in\Occ^j\) be the occupation measure induced by \(\pi^j\). 
Since \(o^\star\in \bestresp(o^\star)\), we have \(o^{j,\star}\in\bestresp_j(o^\star)\). Therefore \(\Phi_j(o^{j,\star};o^\star)\ge \Phi_j(\tilde{o}^j;o^\star).\)
By the equivalence between policies and occupation measures, this inequality implies
\begin{align*}
\costj(\pi^{j,\star};\mu^\star)\ge \costj(\pi^j;\mu^\star).
\end{align*}
Since \(j\in[M]\) and \(\pi^j\in\Pi^j\) were arbitrary, the risk-averse optimality condition holds for every population. 
\end{enumerate}
Therefore \((\pi^\star,\mu^\star)\) is a risk-averse multi-population mean-field equilibrium. Our assertion of existence stands established.
\end{proof}

\textbf{Non-contractivity:} As we noted in Section \ref{subsec:risk:aversion}: contractivity, as in many instances in the mean-field literature \cite{cui2021approximately,ref:learning:MFG:Lau,ref:Cianes:Huang}, in general does not follow. The following result is similar in spirit to \cite[Theorem 2]{cui2021approximately}. 
% ========================================
% ========= Thrm: Non Contrtvty ==========
% ========================================

\begin{myOCP}
\begin{proposition}[Non-contractivity of the fixed point operator]\label{thm:non-contractivity:RA:MP:MFG}
Let \(\briskopt:\M\to\Pi\) be a single-valued selector of the risk-averse best-response correspondence, i.e., for each \(\mu\in\M\), let \(\briskopt(\mu)=(\pi^1,\ldots,\pi^M)\in\Pi,\) where, for every \(j\in[M]\),
\begin{align}\label{eq:b-risk-opt:component:RA:MP:MFE}
\pi^j \in \argmax_{\pi^j\in\Pi^j}
\inf_{\widehat\mu^{-j}\in\ambset(\mu,\eps)} \E^{\pi^j}\left[
\sum_{t=0}^{T} r_t^j\bigl(x_t^j,\mu_t^j,\widehat\mu_t^{-j}\bigr)
\right].
\end{align}
Define the mean-field fixed-point map \(\Gamma:\M\to\M\) by
\begin{align}\label{eq:Gamma:RA:MP:MFE}
\M \ni \mu \mapsto\Gamma(\mu)\Let \bprop\bigl(\briskopt(\mu)\bigr).
\end{align}
Suppose that the image \(\Gamma(\M)\) is finite. Then, either \(\Gamma\) is constant, or \(\Gamma\) is not Lipschitz continuous and hence it cannot be a contraction. 
\end{proposition}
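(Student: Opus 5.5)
The argument is topological. It is modelled on \cite[Theorem 2]{cui2021approximately}. The strategy is to show that a Lipschitz map from a connected set into a finite set must be constant, so that a nonconstant $\Gamma$ with finite image cannot be Lipschitz, let alone a contraction.

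The first step is to establish that the domain $\M$ is path-connected. Each $\M^j$ is the image of the policy set $\Pi^j=\prod_t \Pi_t$ under the propagation operator $\B_\prop^j$. Equivalently, via Remark~\ref{rem:policy:occ:equivalence}, $\M^j$ is the image of the occupation polytope $\Occ^j$ under the linear projection $o^j\mapsto \xi^j$. Since $\Occ^j$ is a nonempty convex polytope (Step~\ref{step:one} of the proof of Theorem~\ref{thm:existence:RA:MP:MFE:appen}), $\M^j$ is convex. Hence $\M=\prod_j \M^j$ is convex and in particular path-connected.

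The second step argues by contradiction. Suppose $\Gamma$ is not constant and is Lipschitz with constant $L$, using the metric on $\M$ inherited from the ambient Euclidean or total-variation norm. Then $\Gamma$ is continuous. Since $\Gamma(\M)=\{m_1,\ldots,m_K\}$ is finite with $K\ge 2$, let
\[
\delta \Let \min_{a\neq b}\|m_a-m_b\|>0 .
\]
Choose $\mu,\mu'\in\M$ with $\Gamma(\mu)\neq\Gamma(\mu')$ and consider the segment
\[
\gamma(s)=(1-s)\mu+s\mu', \qquad s\in[0,1],
\]
which lies in $\M$ by convexity. The map $s\mapsto\Gamma(\gamma(s))$ is continuous from $[0,1]$ into a finite discrete set, so it is constant. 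This contradicts $\Gamma(\mu)\neq\Gamma(\mu')$.

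To make the argument concrete rather than relying on connectedness, I would instead subdivide the segment into $N$ pieces of length less than $\delta/L$. On consecutive points the images differ by less than $\delta$, so they coincide. Chaining these equalities gives $\Gamma(\mu)=\Gamma(\mu')$, a contradiction. Hence a nonconstant $\Gamma$ fails to be Lipschitz. Since every contraction is Lipschitz with constant less than $1$, $\Gamma$ is not a contraction.

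The only point requiring care is the first step: one must confirm that $\M$ really is the full convex set of feasible flows, and not merely some subset on which the selector happens to be defined. If $\M$ were defined as an arbitrary subset of $\P(\X)^{T+1}$, I would replace it by the convex set $\xi(\Occ)$, which is where the fixed-point iteration lives, and run the argument there. Nothing about the risk-averse structure or the ambiguity set $\ambset$ is needed: the result uses only finiteness of the image and connectedness of the domain.
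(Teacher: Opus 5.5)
Your proposal is correct and follows essentially the same argument as the paper. The paper likewise uses convexity of $\M$, the positive minimal gap $\delta_\Gamma$ between distinct points of the finite image, and a subdivision of the segment from $\mu'$ to $\mu$ into $N$ steps of size below $\delta_\Gamma/L_c$, then chains consecutive points to contradict $\Gamma(\mu)\neq\Gamma(\mu')$. Your justification that $\M=\xi(\Occ)$ is convex, which the paper only asserts, and your connectedness remark are sound additions rather than a different route.
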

\end{myOCP}

% \begin{remark}
% The single-valuedness of \(\briskopt\) is a mild condition \cite{cui2021approximately}.
% %
% It should be understood as fixing a selector of the risk-averse best-response correspondence.
% %
% This \emph{does not} impose uniqueness of the risk-averse optimizer. 
% %
% The condition implies that, for each mean-field flow \(\mu\in\M\), \newedit{a single} admissible risk-averse best response is selected.
% %
% This is the object naturally generated by a fixed-point implementation with a deterministic tie-breaking rule. 
% %
% Theorem~\ref{thm:non-contractivity:RA:MP:MFG} studies the induced mean-field map \(\Gamma\Let\B_\prop\circ\briskopt\) and asserts that, whenever \(\Gamma(\M)\) is finite, the resulting map is either constant or not Lipschitz continuous.
% \end{remark}

\begin{remark}
Proposition~\ref{thm:non-contractivity:RA:MP:MFG} plays an auxiliary role in our analysis: it illustrates why contractivity of the unregularized mean-field map cannot generally be assumed and motivates the use of entropy regularization. Here, the single-valuedness of \(\briskopt\) is understood as fixing a selector of the risk-averse best-response correspondence, for example through deterministic tie-breaking.
\end{remark}

\begin{proof}[\highlight{Proof of Proposition \ref{thm:non-contractivity:RA:MP:MFG}}]
The proof follows the finite-image non-contractivity argument of \cite[Theorem 2]{cui2021approximately}, applied to the mean-field map \(\Gamma\) defined in \eqref{eq:Gamma:RA:MP:MFE}. 
If \(\Gamma\) is constant there is nothing to show. Let \(\Gamma\) be non-constant. Then, there exist \(\mu,\mu' \in \M\) with \(\mu \neq \mu'\) such that \(\Gamma(\mu) \neq \Gamma(\mu')\). 
Since, by assumption, \(\Gamma(\M)\) is finite, the quantity
\begin{align*}
\delta_{\Gamma} \Let
\min_{\substack{\nu,\nu'\in\M\\ \Gamma(\nu)\neq\Gamma(\nu')}}
\dtv{\Gamma(\nu),\Gamma(\nu')},\nn
\end{align*}
is well-defined and \(\delta_{\Gamma}>0\) by definition. Hence, for any \(\nu,\nu' \in \M\) such that \(\Gamma(\nu) \neq \Gamma(\nu')\), we have \(\dtv{\Gamma(\nu),\Gamma(\nu')} \ge \delta_{\Gamma}\).

To show that \(\Gamma\) cannot be Lipschitz continuous, assume, by contradiction, that \(\Gamma\) is Lipschitz with a Lipschitz constant \(L_c>0\). 
Since \(\M\) is convex, for every \(N\in\Nz\) and every \(i\in\{0,\ldots,N\}\), the combination \(\mu^i \Let \frac{i}{N}\mu+\frac{N-i}{N}\mu'\) belongs to \(\M\). 
Moreover, \(\mu_0=\mu'\) and \(\mu^N=\mu,\) and \(\dtv{\mu^i,\mu^{i+1}}=\dtv{\mu,\mu'}/N\) for all \(i\in\{0,\ldots,N-1\}\). 
Choose \(N\) sufficiently large so that
\begin{align*}
L_c\,\dtv{\mu^i,\mu^{i+1}} = L_c \frac{\dtv{\mu,\mu'}}{N}
< \delta_{\Gamma}
\end{align*}
for all \(i\in\{0,\ldots,N-1\}\). 
By the triangle inequality,
\begin{align*}
\dtv{\Gamma(\mu),\Gamma(\mu')} =
\dtv{\Gamma(\mu^N),\Gamma(\mu_0)}\le \sum_{i=0}^{N-1}
\dtv{\Gamma(\mu^i),\Gamma(\mu^{i+1})}.
\end{align*}
Since \(\Gamma(\mu)\neq\Gamma(\mu')\), there exists some \(i\in\{0,\ldots,N-1\}\) such that \(\Gamma(\mu^i)\neq\Gamma(\mu^{i+1}).\)
Otherwise, \emph{all terms} along the chain would have identical images under \(\Gamma\), which would imply \(\Gamma(\mu)=\Gamma(\mu')\), a contradiction.

For the index \(i\), the preceding estimate gives \(\dtv{\Gamma(\mu^i),\Gamma(\mu^{i+1})} \ge \delta_{\Gamma}.\)
On the other hand, since \(\Gamma\) is Lipschitz with constant \(L_c\), we have
\begin{align*}
\dtv{\Gamma(\mu^i),\Gamma(\mu^{i+1})} \le L_c \dtv{\mu^i,\mu^{i+1}} < \delta_{\Gamma},
\end{align*}
a contradiction. Thus, \(\Gamma\) cannot be Lipschitz continuous and consequently, \(\Gamma\) cannot be a contraction.
\end{proof}
\section{Proof of Theorem \ref{thm:existence:entropic:RA:MP:MFE}}\label{sec:existence:ent-reg:appen}

We now restate and provide a proof of Theorem \ref{thm:existence:entropic:RA:MP:MFE} from Section \ref{subsec:Entropy:Regularization}. 
% ========================================
% ======== Thrm: Existence EntReg ========
% ========================================

\begin{myOCP}
\begin{theorem}[Existence of entropy-regularized RA-MP-MFE]\label{thm:existence:entropic:RA:MP:MFE:appen}
Recall the problem data from Section \ref{sec:prob:form} and assume that \(\mu_0^j\in\P(\X)\) are fixed for all \(j\in[M]\). 
Let Assumption~\ref{assmpt:lipschitiz-model} be in force; then, for every fixed \(\eps\in \lcrc{0}{1}\) and \(\entparam>0\), there exists an entropy-regularized RA-MP-MFE \((\pi^\star_{\entparam},\mu^\star_{\entparam})\in\Pi\times\M\) in the sense of Definition~\ref{def:entropic:RA:MP:MFE}.
More precisely, there exists \(\pi^\star_{\entparam}\) such that, with \(\mu^\star_{\entparam}\Let\B_\prop(\pi^\star_{\entparam})\), for every \(j\in[M]\),
\begin{align}\label{eq:existence:entropic:RA:MP:MFE}
\pi^{j,\star}_{\entparam} \in \argmax_{\pi^j\in\Pi^j}
\inf_{\widehat\mu^{-j}\in\ambset(\mu^\star_{\entparam},\eps)}
\E^{\pi^j}\left[ \sum_{t=0}^{T}
r_t^j\bigl(x_t^j,\mu_{t,\entparam}^{j,\star},\widehat\mu_t^{-j}\bigr)
-\entparam\sum_{t=0}^{T-1} \log \pi_t^j(u_t^j|x_t^j) \right],
\end{align}
and \newedit{in addition}, i.e., \(\mu_{\entparam}^{j,\star}=\B_\prop^j(\pi_{\entparam}^{j,\star})\quad\text{for all }\,j\in[M].\)
\end{theorem}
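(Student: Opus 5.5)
The plan is to repeat the Kakutani argument from the proof of Theorem~\ref{thm:existence:RA:MP:MFE:appen}, now with the regularized occupation-measure payoff $\Phi_{j,\entparam}(\tilde o^j;o)=\Phi_j(\tilde o^j;o)+\Psi_{j,\entparam}(\tilde o^j)$.

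\textbf{Step 1: reformulation on $\Occ^j$.} For a policy $\pi^j$ with induced occupation measure $\tilde o^j=(\tilde\xi^j,\tilde\eta^j)$, we have $\E^{\pi^j}[-\entparam\sum_t\log\pi_t^j(u_t^j|x_t^j)]=\Psi_{j,\entparam}(\tilde o^j)$, using $\tilde\eta_t^j(x,u)=\tilde\xi_t^j(x)\pi_t^j(u|x)$. This term does not depend on $\widehat\mu^{-j}$, so it can be pulled out of the infimum. Hence $\bar\objective_{j,\entparam}(\pi^j;\mu)=\Phi_{j,\entparam}(\tilde o^j;o)$ whenever $\xi(o)=\mu$. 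On states with $\tilde\xi_t^j(x)=0$ the summand is $0$, with the convention $0\log 0=0$. So the arbitrary choice of $\pi_t^j(\cdot|x)$ there, as in \eqref{eq:markov:policy}, does not affect the value, and Step~\ref{step:nine} transfers verbatim.

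\textbf{Step 2: regularity of $\Psi_{j,\entparam}$.} Write $\Psi_{j,\entparam}(\tilde o^j)=-\entparam\sum_{t,x,u}\tilde\eta_t^j(x,u)\log\bigl(\tilde\eta_t^j(x,u)/\tilde\xi_t^j(x)\bigr)$. Each summand is $\entparam$ times the negative of the perspective of $a\mapsto a\log a$, evaluated at $(\tilde\eta,\tilde\xi)$. The perspective of a convex function is jointly convex, so $\Psi_{j,\entparam}$ is concave on $\Occ^j$; the constraint \eqref{occ:constraint:1} ties $\tilde\xi_t^j$ linearly to $\tilde\eta_t^j$.

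For continuity, use \eqref{occ:constraint:1} to rewrite each time slice as
\[
-\sum_{x,u}\tilde\eta\log\tilde\eta+\sum_x\tilde\xi\log\tilde\xi .
\]
This is a difference of finitely many continuous functions on the compact polytope, since $s\mapsto s\log s$ is continuous on $[0,1]$. Hence $\Psi_{j,\entparam}$ is continuous on $\Occ^j$ and independent of $o$.

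\textbf{Step 3: Kakutani.} Steps~\ref{step:four}--\ref{step:seven} already give joint continuity of $\Phi_j$ on $\Occ^j\times\Occ$ and concavity in $\tilde o^j$. Adding $\Psi_{j,\entparam}$ preserves both properties. Therefore the correspondence $o\mapsto\argmax_{\tilde o^j\in\Occ^j}\Phi_{j,\entparam}(\tilde o^j;o)$ has nonempty, compact, convex values and is upper hemicontinuous by Berge's theorem. Kakutani's theorem then yields a fixed point $o^\star_\entparam$. Reconstructing $\pi^\star_\entparam$ as in Step~\ref{step:nine} gives \eqref{entropic:risk:av:cond:1}, and Step~1 gives \eqref{eq:existence:entropic:RA:MP:MFE}.

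\textbf{Main obstacle.} The hardest point is Step~1 together with the continuity claim in Step~2, namely the behaviour of $\Psi_{j,\entparam}$ at the boundary where $\tilde\xi_t^j(x)\to0$. There the ratio $\tilde\eta/\tilde\xi$ is undefined. I would handle it with the $0\log0$ convention and the decomposition above, which avoids ratios altogether. It also helps to note that $|\Psi_{j,\entparam}|\le\entparam T\log|\U|$, so the payoff stays bounded.

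Strict concavity is not needed here; uniqueness would be a refinement, presumably the role of Lemma~\ref{lem:strong:concavity:entreg}.
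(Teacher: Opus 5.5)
Your proposal is correct and follows the paper's proof essentially step for step. You decompose $\Phi_{j,\entparam}=\Phi_j+\Psi_{j,\entparam}$, reuse the joint continuity and concavity of $\Phi_j$ from Theorem~\ref{thm:existence:RA:MP:MFE:appen}, add continuity and concavity of $\Psi_{j,\entparam}$, and then apply Berge, Kakutani and the policy reconstruction.

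The only small variation is how you prove continuity of $\Psi_{j,\entparam}$. You use the identity $-\sum_{u}\tilde\eta_t^j(x,u)\log\bigl(\tilde\eta_t^j(x,u)/\tilde\xi_t^j(x)\bigr)=-\sum_u\tilde\eta_t^j(x,u)\log\tilde\eta_t^j(x,u)+\tilde\xi_t^j(x)\log\tilde\xi_t^j(x)$, which holds on $\Occ^j$ by \eqref{occ:constraint:1}. The paper instead squeezes each summand between $0$ and $\tilde\xi_t^j(x)\log|\U|$ near $\tilde\xi_t^j(x)=0$. Both arguments work. You also make explicit the identification $\bar{\objective}_{j,\entparam}(\pi^j;\xi(o))=\Phi_{j,\entparam}(\tilde o^j;o)$, which the paper uses only implicitly.
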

\end{myOCP}

\begin{proof}
Our proof is a modification of the arguments presented in Theorem~\ref{thm:existence:RA:MP:MFE}. We only verify the additional regularity introduced by the entropy regularization step, i.e., continuity and concavity.

To this end, recall \(\Occ^j\), \(\Occ\), and \(\xi(o)\) from the definition of occupation measures and properties \eqref{occ:constraint:1}--\eqref{occ:constraint:2}. For \(\tilde{o}^j\Let(\tilde \xi^j,\tilde\eta^j)\in\Occ^j\), define
\begin{align}\label{eq:entropy:occ:regularizer}
\tilde{o}^j \mapsto \Psi_{j,\entparam}(\tilde{o}^j)
\Let -\entparam\sum_{t=0}^{T-1}\sum_{x\in\X}\sum_{u\in\U}
\tilde\eta_t^j(x,u) \log\frac{\tilde\eta_t^j(x,u)}{\tilde \xi_t^j(x)}.
\end{align}
Note that, when \(\tilde \xi_t^j(x)=0\), the occupation constraints imply \(\tilde\eta_t^j(x,u)=0\) for every \(u\in\U\), so the corresponding summand is defined to be zero.
Also, define the \emph{entropy-regularized} robust occupation-measure payoff by
\begin{align}\label{eq:entropic:robust:occ:payoff}
(\tilde{o}^j,o) \mapsto \Phi_{j,\entparam}(\tilde{o}^j;o) \Let
\inf_{\widehat\mu^{-j}\in\ambset(\xi(o),\eps)}
\left[ \sum_{t=0}^{T}\sum_{x\in\X}
\tilde \xi_t^j(x)r_t^j(x,\xi_t^j,\widehat\mu_t^{-j})
+ \Psi_{j,\entparam}(\tilde{o}^j) \right].
\end{align}
Since \(\Psi_{j,\entparam}(\cdot)\) in \eqref{eq:entropy:occ:regularizer} does not depend on \(\widehat\mu^{-j}\), we can write \eqref{eq:entropic:robust:occ:payoff} as
\begin{align}\label{eq:entropic:Phi:decomp}
\Phi_{j,\entparam}(\tilde{o}^j;o) = \Phi_j(\tilde{o}^j;o)+\Psi_{j,\entparam}(\tilde{o}^j),
\end{align}
where \(\Phi_j(\cdot)\) is the unregularized robust occupation-measure payoff defined in \eqref{robust:occ:payoff}.

In Theorem~\ref{thm:existence:RA:MP:MFE}, we have established that the map \((\tilde{o}^j,o)\mapsto\Phi_j(\tilde{o}^j;o)\) is continuous on \(\Occ^j\times\Occ\), and, for fixed \(o\in\Occ\), the map \(\tilde{o}^j\mapsto\Phi_j(\tilde{o}^j;o)\) is concave. We now show that the same continuity and concavity properties hold for \(\Psi_{j,\entparam}(\cdot)\).

\textbf{Continuity of \(\Psi_{j,\entparam}(\cdot)\) on \(\Occ^j\):} Indeed, on the region where \(\tilde \xi_t^j(x)>0\), this follows directly from continuity of the function \((a,b)\mapsto a\log(a/b)\). We need to check the regularity when \(\tilde \xi_t^j(x)\to0\). Whenever \(\tilde \xi_t^j(x)>0\), define
\(\tilde\pi_t^j(u|x)\Let\tilde\eta_t^j(x,u)/\tilde \xi_t^j(x).\)
\newedit{Then,}
\begin{align*}
0 \le -\sum_{u\in\U}\tilde\eta_t^j(x,u) \log\frac{\tilde\eta_t^j(x,u)}{\tilde \xi_t^j(x)} = \tilde \xi_t^j(x) \left(-\sum_{u\in\U}\tilde\pi_t^j(u|x)\log\tilde\pi_t^j(u|x) \right)\le \tilde \xi_t^j(x)\log|\U|,
\end{align*}
and thus, whenever \(\tilde \xi_t^j(x)\to0\), the entropy contribution converges to zero.
Hence, \(\Psi_{j,\entparam}(\cdot)\) is continuous on \(\Occ^j\).

\noindent \textbf{Concavity of \(\Psi_{j,\entparam}(\cdot)\) on \(\Occ^j\):}
For each fixed \(t\) and \(x\), define 
\begin{align*}
\mathcal{D}_\U\Let \aset[\bigg]{(\xi,\zeta)\in\Real_+\times\Real[|\U|]_{+} \suchthat \sum_{u\in\U}\zeta(u)=\xi}.
\end{align*}
Then, the map \(\mathcal{D}_\U\ni(\xi,\zeta)\mapsto -\sum_{u\in\U}\zeta(u)\log\frac{\zeta(u)}{\xi}\in\Real\)
is concave in the joint variable $(\xi,\zeta)$. Therefore \(\Psi_{j,\entparam}(\cdot)\), being a finite sum of concave functions, is concave on \(\Occ^j\).\footnote{In fact, more is true; see Lemma \ref{lem:strong:concavity:entreg}, assertion \ref{lem:strong:concavity:entreg:I} ahead.}

Thus, it follows from \eqref{eq:entropic:Phi:decomp} that \(\Phi_{j,\entparam}(\cdot)\) is continuous on \(\Occ^j\times\Occ\), and that, for every fixed \(o\in\Occ\), the map \(\tilde{o}^j\mapsto\Phi_{j,\entparam}(\tilde{o}^j;o)\) is concave.
In a similar manner as before, define the entropy-regularized best-response correspondence 
\begin{align}\label{eq:ent:best:response}
o \mapsto \bestresp_{j,\entparam}(o)
\Let \argmax_{\tilde{o}^j\in\Occ^j} \Phi_{j,\entparam}(\tilde{o}^j;o).
\end{align}
Since \(\Occ^j\) is compact and \(\Phi_{j,\entparam}(\cdot)\) is continuous, Weierstrass' theorem implies that \(\bestresp_{j,\entparam}(o)\) is nonempty and compact.
Since \(\Phi_{j,\entparam}(\cdot;o)\) is concave and \(\Occ^j\) is convex, \(\bestresp_{j,\entparam}(o)\) is convex.
Moreover, since the feasible set is fixed and compact and \(\Phi_{j,\entparam}(\cdot)\) is jointly continuous, Berge's maximum principle~\cite[Chapter 17, Theorem 17.31]{ref:AliBor-06} implies that \(\bestresp_{j,\entparam}(\cdot)\) is upper hemicontinuous.

Then, the correspondence \(\bestresp_{\entparam}:\Occ\rightrightarrows\Occ\), defined by \(\bestresp_{\entparam}(o) \Let \prod_{j=1}^M\bestresp_{j,\entparam}(o)\), has nonempty, compact, convex values and is upper hemicontinuous. 
Since \(\Occ\) is nonempty, compact, and convex, Kakutani's fixed-point theorem \cite[Chapter~17, Corollary~17.55]{ref:AliBor-06} implies that there exists \(o^\star\in\Occ\) such that \(o^\star\in\bestresp_{\entparam}(o^\star).\)\footnote{More is true; see Lemma \ref{lem:strong:concavity:entreg}, assertion \ref{lem:strong:concavity:entreg:II}, for details.}

Finally, reconstruct Markov policies from the fixed point \(o^\star\) as in the proof of Theorem~\ref{thm:existence:RA:MP:MFE}. 
The policy--occupation-measure equivalence gives the consistency condition \(\mu^{j,\star}=\B_\prop^j(\pi^{j,\star})\), while \(o^\star\in\bestresp_{\entparam}(o^\star)\) gives the entropy-regularized risk-averse optimality inequality in \eqref{eq:existence:entropic:RA:MP:MFE}.
Our proof is complete.
\end{proof}

\begin{remark}
In fact the best response map is a singleton and the optimizer is unique. This follows from Lemma \ref{lem:strong:concavity:entreg} ahead. 
\end{remark}

% ========================================
% ===== Thrm: Approx RA-MFE EntReg =======
% ========================================

\textbf{An approximation result:} Our next (additional) result concerns \newedit{the} approximation of the RA-MFE under entropy regularization.

\begin{myOCP}
\begin{proposition}[Approximation and vanishing-regularization limit]\label{thm:approx:vanishing:entropic:RA:MP:MFE}
Suppose that \edit{the assumptions of} Theorem \ref{thm:existence:entropic:RA:MP:MFE} hold. 
Fix \(\eps \in \lcrc{0}{1}\), and for any \(\pi\Let(\pi^1,\ldots,\pi^M)\in\Pi\), let \(\mu^\pi\Let\B_\prop(\pi)\), and recall the definition of (population-wise) risk-averse exploitability by
\begin{align}\label{eq:population:RA:exploitability}
\phi_{\RA}^{j}(\pi) \Let \sup_{\tilde\pi^j\in\Pi^j}
\bar{\objective}_{j}(\tilde\pi^j;\mu^\pi) - \bar{\objective}_{j}(\pi^j;\mu^\pi) \quad\text{for } j\in[M],
\end{align}
and the total risk-averse exploitability by
\begin{align}\label{eq:total:RA:exploitability}
\phi_{\RA}(\pi)\Let\sum_{j=1}^M\phi_{\RA}^{j}(\pi).
\end{align}
Then, the following assertions hold:
\begin{enumerate}[label=\textup{(\ref{thm:approx:vanishing:entropic:RA:MP:MFE}-\alph*)}, leftmargin=*, widest=ii, align=left]
\item \label{approx:thrm:conseq:1} For every \(\entparam>0\), let \((\pi^\star_\entparam,\mu^\star_\entparam)\in\Pi\times\M\) be an entropy-regularized RA-MP-MFE in the sense of Definition~\ref{def:entropic:RA:MP:MFE}. Then, for every \(j\in[M]\), we have the bound
\begin{align}\label{eq:population:RA:exploitability:bound}
\phi_{\RA}^{j}(\pi^\star_\entparam)\le \entparam T\log|\U|,
\end{align}
and consequently, \(\phi_{\RA}(\pi^\star_\entparam)\le M\entparam T\log|\U|\).
In particular, every entropy-regularized RA-MP-MFE is a (population-wise) \(\entparam T\log|\U|\)-approximate RA-MP-MFE of the unregularized problem.

\item \label{approx:thrm:conseq:2} Let \(\{\entparam_n\}_{n\in\Nz}\subset (0, +\infty)\) be such that \(\entparam_n\to 0\) as \(n\to +\infty\). 
For each \(n\in\Nz\), let \(\bigl(\pi^\star_{\entparam_n},\mu^\star_{\entparam_n}\bigr)\in\Pi\times\M\) be an entropy-regularized RA-MP-MFE, and let \(o^\star_{\entparam_n}\in\Occ\) be an occupation-measure representation induced by \(\pi^\star_{\entparam_n}\). 
If \(o^\star\in\Occ\) is an accumulation point of the sequence \(\{o^\star_{\beta_n}\}_{n\in\Nz}\), then \(o^\star\) induces a policy \(\pi^\star\in\Pi\) and a mean-field flow \(\mu^\star\Let \xi(o^\star)\) such that \((\pi^\star,\mu^\star)\) is a RA-MP-MFE in the sense of Definition~\ref{def:RA:MP:MFE}.
\end{enumerate}
\end{proposition}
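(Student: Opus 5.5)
Part \ref{approx:thrm:conseq:1} follows from a uniform sandwich bound on the entropy term, and part \ref{approx:thrm:conseq:2} from a stability argument combining the continuity already established in the proof of Theorem~\ref{thm:existence:RA:MP:MFE:appen} with part \ref{approx:thrm:conseq:1}.

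For \ref{approx:thrm:conseq:1}, fix \(j\) and \(\mu\in\M\), and work in occupation-measure coordinates via \eqref{eq:entropic:Phi:decomp}, \(\Phi_{j,\entparam}=\Phi_j+\Psi_{j,\entparam}\). The continuity argument in the proof of Theorem~\ref{thm:existence:entropic:RA:MP:MFE:appen} shows that each \((t,x)\) summand of \(\Psi_{j,\entparam}/\entparam\) equals \(\tilde\xi_t^j(x)\) times the Shannon entropy of \(\tilde\pi_t^j(\cdot|x)\). This entropy lies in \([0,\log|\U|]\). Summing over \(x\) with weights \(\tilde\xi_t^j(x)\), and over \(t=0,\ldots,T-1\), gives
\[
0\le \Psi_{j,\entparam}(\tilde o^j)\le \entparam T\log|\U| \quad \text{for every } \tilde o^j\in\Occ^j .
\]
Since the entropy term does not depend on \(\widehat\mu^{-j}\), it can be pulled out of the infimum. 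This yields \(\bar J_j(\pi^j;\mu)\le\bar J_{j,\entparam}(\pi^j;\mu)\le \bar J_j(\pi^j;\mu)+\entparam T\log|\U|\) for every \(\pi^j\). Now take \(\mu=\mu^\star_\entparam=\mu^{\pi^\star_\entparam}\) and any \(\tilde\pi^j\). Using \eqref{entropic:risk:av:cond:2.1} in the middle step,
\[
\bar J_j(\tilde\pi^j;\mu)\le \bar J_{j,\entparam}(\tilde\pi^j;\mu)\le \bar J_{j,\entparam}(\pi^{j,\star}_\entparam;\mu)\le \bar J_j(\pi^{j,\star}_\entparam;\mu)+\entparam T\log|\U|.
\]
Taking the supremum over \(\tilde\pi^j\) gives \eqref{eq:population:RA:exploitability:bound}, and summing over \(j\) gives the bound \(M\entparam T\log|\U|\).

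For \ref{approx:thrm:conseq:2}, pass to a subsequence with \(o^\star_{\entparam_n}\to o^\star\); this limit lies in \(\Occ\) because \(\Occ\) is compact. Since \(\mu^\star_{\entparam_n}=\xi(o^\star_{\entparam_n})\), part \ref{approx:thrm:conseq:1} rewritten in occupation coordinates reads
\[
\Phi_j(\tilde o^j;o^\star_{\entparam_n})\le \Phi_j(o^{j,\star}_{\entparam_n};o^\star_{\entparam_n})+\entparam_n T\log|\U| \quad \text{for all } \tilde o^j\in\Occ^j .
\]
Step~\ref{step:seven} of the proof of Theorem~\ref{thm:existence:RA:MP:MFE:appen} gives joint continuity of \(\Phi_j\) on \(\Occ^j\times\Occ\). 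Letting \(n\to\infty\) for each fixed \(\tilde o^j\) therefore yields \(\Phi_j(\tilde o^j;o^\star)\le\Phi_j(o^{j,\star};o^\star)\), i.e. \(o^{j,\star}\in\bestresp_j(o^\star)\) for every \(j\), so \(o^\star\) is a fixed point of \(\bestresp\). The policy reconstruction of Step~\ref{step:nine} then produces \(\pi^\star\) with \(\B_\prop(\pi^\star)=\xi(o^\star)=\mu^\star\) and \(\pi^\star\in\briskopt(\mu^\star)\).

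The main point requiring care is the translation between policy-level and occupation-level payoffs, i.e. that \(\bar J_j(\pi^j;\mu^\pi)=\Phi_j(o^j;o^\pi)\). This holds because the reward enters only through \(\tilde\xi\), and by the equivalence in Remark~\ref{rem:policy:occ:equivalence}. It matters most in \ref{approx:thrm:conseq:2}, where the nominal profile also varies with \(n\) and the limit must be taken jointly in \((\tilde o^j,o)\). That is exactly where the lower and upper hemicontinuity of \(\mu\mapsto\ambset(\mu,\eps)\), via Berge's theorem, is needed, rather than mere continuity in \(\tilde o^j\).
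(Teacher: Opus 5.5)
Your proposal is correct and follows essentially the same route as the paper. For (a), you combine the bound $0\le\Psi_{j,\beta}\le\beta T\log|\mathcal{U}|$ with the regularized optimality inequality. For (b), you pass the resulting $\beta_n T\log|\mathcal{U}|$-approximate optimality inequality to the limit via joint continuity of $\Phi_j$ and then reconstruct the policy; the only cosmetic difference is that you obtain the occupation-level inequality in (b) by transporting (a), whereas the paper derives it directly from the fixed-point inequality for $\Phi_{j,\beta_n}=\Phi_j+\Psi_{j,\beta_n}$.
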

\end{myOCP}

\begin{remark}\label{rem:on:trade:off}
Theorem~\ref{thm:approx:vanishing:entropic:RA:MP:MFE} highlights a natural trade-off for the choice of the parameter \(\entparam\) used for regularization. As \(\entparam\) tends to \(0\), the (entropy-regularized) equilibrium becomes an increasingly accurate approximation of the original risk-averse equilibrium, with the approximation error vanishing at a rate \(\mathcal{O}(\entparam)\).
On the other hand, stronger entropy regularization improves the stability of the best-response mapping and, as shown below, facilitates contractivity. Thus, smaller values of \(\entparam\) favor fidelity to the unregularized game, whereas larger values favor contractive behavior and computational convergence. This is common phenomenon in entropy regularized mean-field games~\cite{cui2021approximately}. This is also visible in the multi-population chasing example presented in Section~\ref{sec:numexp}.
\end{remark}

\begin{proof}[\highlight{Proof of Proposition \ref{thm:approx:vanishing:entropic:RA:MP:MFE}}]
We start with the proof of the first assertion, i.e., \ref{approx:thrm:conseq:1}.
To this end, fix \(\entparam>0\), and let \((\pi^\star_\entparam,\mu^\star_\entparam)\in\Pi\times\M\) be an entropy-regularized RA-MP-MFE. For each \(j\in[M]\) and \(\pi^j\in\Pi^j\), define the entropy
\begin{align*}
\pi^j \mapsto \entropy_j(\pi^j) \Let \E^{\pi^j}\left[-\sum_{t=0}^{T-1}\log\pi_t^j(u_t^j|x_t^j)\right].
\end{align*}
Since the Shannon entropy of a probability vector on \(\U\) is bounded between \(0\) and \(\log|\U|\), it follows that
\begin{align}\label{eq:entropy:policy:bound}
0\le \entropy_j(\pi^j)\le T\log|\U|\quad \text{for every } \pi^j\in\Pi^j.
\end{align}
By Definition~\ref{def:entropic:RA:MP:MFE}, for every \(\tilde\pi^j\in\Pi^j\), we have \(\bar{\objective}_{j,\entparam}(\pi^{j,\star}_\entparam;\mu^\star_\entparam)\ge \bar{\objective}_{j,\entparam}(\tilde\pi^j;\mu^\star_\entparam).\)
Since the entropy term does not depend on \(\widehat\mu^{-j}\), we have \(\bar{\objective}_{j,\entparam}(\pi^j;\mu)=\bar{\objective}_{j}(\pi^j;\mu)+\entparam\,\entropy_j(\pi^j).\)
Therefore, for every \(\tilde\pi^j\in\Pi^j\), we have the bound
\begin{align*}
\bar{\objective}_{j}(\pi^{j,\star}_\entparam;\mu^\star_\entparam)+\entparam\,\entropy_j(\pi^{j,\star}_\entparam)\ge \bar{\objective}_{j}(\tilde\pi^j;\mu^\star_\entparam)+\entparam\,\entropy_j(\tilde\pi^j).
\end{align*}
Rearranging and using \eqref{eq:entropy:policy:bound}, we obtain
\begin{align*}
\bar{\objective}_{j}(\tilde\pi^j;\mu^\star_\entparam)-\bar{\objective}_{j}(\pi^{j,\star}_\entparam;\mu^\star_\entparam)\le \entparam\left[\entropy_j(\pi^{j,\star}_\entparam)-\entropy_j(\tilde\pi^j)\right]\le \entparam T\log|\U|.
\end{align*}
Taking the supremum over \(\tilde\pi^j\in\Pi^j\), and using \(\mu^\star_\entparam=\B_\prop(\pi^\star_\entparam)\), yields \(\phi_{\RA}^{j}(\pi^\star_\entparam)\le \entparam T\log|\U|.\)
This proves \eqref{eq:population:RA:exploitability:bound}, and summing over \(j\in[M]\) gives \(\phi_{\RA}(\pi^\star_\entparam)\le M\entparam T\log|\U|\); as desired.

We now prove \ref{approx:thrm:conseq:2}. 
Let \(o^\star\in\Occ\) be an accumulation point of \(\{o^\star_{\entparam_n}\}_{n\in\Nz}\). 
Passing to a subsequence if necessary (and for convinience, not relabeling) suppose that \(o^\star_{\entparam_n}\to o^\star\) as \(n \to +\infty\).
We write \(o^\star_{\entparam_n}\Let(o^{1,\star}_{\entparam_n},\ldots,o^{M,\star}_{\entparam_n})\) and \(o^\star\Let(o^{1,\star},\ldots,o^{M,\star})\).
Fixing \(j\in[M]\) and \(\tilde{o}^j\in\Occ^j\) and noting the fact that \(o^\star_{\entparam_n}\) is an entropy-regularized fixed point in occupation measures, we have \(\Phi_{j,\entparam_n}(o^{j,\star}_{\entparam_n};o^\star_{\entparam_n})\ge \Phi_{j,\entparam_n}(\tilde{o}^j;o^\star_{\entparam_n}).\)
Using the fact that \(\Phi_{j,\entparam_n}=\Phi_j+\Psi_{j,\entparam_n}\), we have
\begin{align*}
\Phi_j(\tilde{o}^j;o^\star_{\entparam_n})-\Phi_j(o^{j,\star}_{\entparam_n};o^\star_{\entparam_n})\le \Psi_{j,\entparam_n}(o^{j,\star}_{\entparam_n})-\Psi_{j,\entparam_n}(\tilde{o}^j).
\end{align*}
By the entropy bound in occupation-measure form, \(0\le \Psi_{j,\entparam_n}(o^j)\le \entparam_n T\log|\U|\) for every \(o^j\in\Occ^j\). Hence,
\begin{align}\label{eq:vanishing:approx:occ:ineq}
\Phi_j(\tilde{o}^j;o^\star_{\entparam_n})-\Phi_j(o^{j,\star}_{\entparam_n};o^\star_{\entparam_n})\le \entparam_n T\log|\U|.
\end{align}
Letting \(n\to+\infty\) in \eqref{eq:vanishing:approx:occ:ineq}, the right-hand side converges to zero. 
Moreover, by the proof of Theorem~\ref{thm:existence:RA:MP:MFE}, the map \((\tilde{o}^j,o)\mapsto\Phi_j(\tilde{o}^j;o)\) is continuous on \(\Occ^j\times\Occ\).
Therefore, \(\Phi_j(\tilde{o}^j;o^\star)-\Phi_j(o^{j,\star};o^\star)\le 0.\)
Since \(\tilde{o}^j\in\Occ^j\) was arbitrary, it follows that
\begin{align*}
o^{j,\star}\in\argmax_{\tilde{o}^j\in\Occ^j}\Phi_j(\tilde{o}^j;o^\star).
\end{align*}
Since \(j\in[M]\) was arbitrary, \(o^\star\) satisfies the unregularized risk-averse optimality condition for every population.

Finally, since \(o^\star\in\Occ\), the policy--occupation-measure equivalence used in the proof of Theorem~\ref{thm:existence:RA:MP:MFE} gives a policy \(\pi^\star\in\Pi\) inducing \(o^\star\), and the corresponding MF-flow is \(\mu^\star\Let \xi(o^\star)\). 
The occupation constraints give \(\mu^{j,\star}=\B_\prop^j(\pi^{j,\star})\) for every \(j\in[M]\), while the optimality condition above gives the risk-averse best-response condition. 
Hence \((\pi^\star,\mu^\star)\) is a RA-MP-MFE in the sense of Definition~\ref{def:RA:MP:MFE}. 
Our proof is complete.
\end{proof}

\section{Proofs of Theorems \ref{thm:contractivity:regularized:BR} and \ref{thm:contractivity:inner:regularized:BR}}\label{sec:contraction:ent-reg:appen}

% ========================================
% ======= Lem: Strong concavity ==========
% ========================================

\black{In Theorem \ref{thm:existence:entropic:RA:MP:MFE} we showed that the mapping 
\(\Psi_{j,\entparam}(\cdot)\) is concave. We start by showing a stronger result, namely, that \(\Psi_{j,\entparam}(\cdot)\) is \emph{strongly concave}; this is one of the main ingredients for showing contractivity in the sequel.}
\begin{myOCP}
\begin{lemma}[Strong concavity of \(\Phi_{j,\entparam}(
 \cdot,o)\)]\label{lem:strong:concavity:entreg}
 Fix \(j\in[M]\), \(T \in \Nz\), and \(\entparam>0\). For \(o^j=(\xi^j,\eta^j)\) and \(\bar o^j=(\bar{\xi}^j,\bar{\eta}^j)\) in \(\Occ^j\), recall the norm
\begin{align}
\|o^j-\bar{o}^j\|_{\onorm} \Let \max \aset[\Big]{\max_{0\le t \le  T}\|\xi_t^j-\bar{\xi}_t^j\|_1,\,\max_{0 \le t<T}\|\eta_t^j-\bar{\eta}_t^j\|_1}. \nn
\end{align}
Consider the mapping \(R_j:\Occ^j\to\Real\), given by
\begin{align}\label{eq:negative:causal:entropy}
\Occ^j \ni o^j \mapsto R_j(o^j) \Let \sum_{t=0}^{T-1}\sum_{x\in\X}\sum_{u\in\U}
\eta_t^j(x,u)\log\frac{\eta_t^j(x,u)}{\xi_t^j(x)} \in \Real.
\end{align}
Then \(R_j\) is \(1\)-strongly convex on \(\Occ^j\) with respect to \(\|\cdot\|_{\onorm}\), i.e., for every \(o^j,\bar o^j\in\Occ^j\) and \(\lambda\in \lcrc{0}{1}\),
\begin{align}\label{eq:negative:causal:entropy:strong:convexity}
R_j\bigl(\lambda o^j+(1-\lambda)\bar o^j\bigr)\le \lambda R_j(o^j)+(1-\lambda)R_j(\bar o^j) -\frac{\lambda(1-\lambda)}{2}\|o^j-\bar o^j\|_{\onorm}^2.
\end{align}
Consequently, we have the following assertions:
\begin{enumerate}[label=\textup{(\ref{lem:strong:concavity:entreg}-\alph*)}, leftmargin=*, widest=b, align=left]

\item \label{lem:strong:concavity:entreg:I} The mapping \(\Psi_{j,\entparam}(\cdot) = -\entparam R_j(\cdot)\) in \eqref{eq:entropy:occ:regularizer} is \(\entparam\)-strongly concave on \(\Occ^j\) with respect to \(\|\cdot\|_{\onorm}\). 
Moreover, for every fixed \(o\in\Occ\), the map \(\tilde{o}^j\mapsto\Phi_{j,\entparam}(\tilde{o}^j;o)\) in \eqref{eq:entropic:Phi:decomp} is \(\entparam\)-strongly concave on \(\Occ^j\). 

\item \label{lem:strong:concavity:entreg:II} The best response \(\bestresp_{j,\entparam}(o)\) defined in \eqref{eq:ent:best:response} is a singleton for every \(o\in\Occ\). 
\end{enumerate}
\end{lemma}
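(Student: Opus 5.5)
The plan is to prove the strong convexity inequality \eqref{eq:negative:causal:entropy:strong:convexity} by lifting occupation measures to path measures. I will not attempt a direct Hessian computation. For fixed \(x\), the map \(\eta_t^j(x,\cdot)\mapsto\sum_u\eta\log\eta-\xi\log\xi\) is degenerate along rescalings \(\eta_t^j(x,\cdot)\propto\eta_t^j(x,\cdot)\). Such a rescaling changes \(\xi_t^j(x)\), so the required curvature has to come from the dynamics linking the time steps, and a trajectory-level argument handles this coupling automatically.

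\emph{Step 1 (path-measure representation).} Let \(Q\) be the law on \((x_0,u_0,\ldots,u_{T-1},x_T)\) given by \(x_0\sim\mu_0^j\), \(u_t\) uniform on \(\U\), and \(x_{t+1}\sim f_t^j(\cdot|x_t,u_t)\). For \(o^j\in\Occ^j\), let \(P\) be the law generated by the Markov policy \eqref{eq:markov:policy}. By Remark~\ref{rem:policy:occ:equivalence}, \(P\) has one-time marginals \(\xi_t^j,\eta_t^j\). The chain rule for relative entropy gives zero contribution from the initial law and from the transition kernels, since they coincide for \(P\) and \(Q\). Hence
\begin{align*}
\mathrm{KL}(P\|Q)=\sum_{t}\E_P\bigl[\mathrm{KL}(\pi_t(\cdot|x_t)\|\mathrm{Unif})\bigr]=R_j(o^j)+T\log|\U|.
\end{align*}

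\emph{Step 2 (Markovization inequality).} Fix \(o^j,\bar o^j\in\Occ^j\) with path laws \(P,\bar P\), and let \(P_\lambda\Let\lambda P+(1-\lambda)\bar P\). This mixture is generally non-Markov, but its one-time marginals are exactly those of \(o_\lambda\Let\lambda o^j+(1-\lambda)\bar o^j\). Again by the chain rule, \(\mathrm{KL}(P_\lambda\|Q)=\sum_t\E_{P_\lambda}[\mathrm{KL}(P_\lambda(u_t\in\cdot|h_t)\|\mathrm{Unif})]\), where \(h_t\) denotes the history up to \(x_t\). Convexity of the KL divergence, together with Jensen's inequality applied to the conditioning from \(h_t\) to \(x_t\), bounds each summand below by \(\E_{P_\lambda}[\mathrm{KL}(P_\lambda(u_t|x_t)\|\mathrm{Unif})]\). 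The conditional \(P_\lambda(u_t|x_t)\) is precisely the Markov policy induced by \(o_\lambda\). Therefore
\begin{align*}
R_j(o_\lambda)+T\log|\U|\le \mathrm{KL}(P_\lambda\|Q).
\end{align*}
I expect this step to be the main point requiring care. The delicate parts are the zero-mass states, handled by the conventions of \eqref{eq:markov:policy}, and checking that mixing the path laws reproduces the mixed occupation measure.

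\emph{Step 3 (Pinsker and marginalization).} The exact identity \(\lambda\mathrm{KL}(P\|Q)+(1-\lambda)\mathrm{KL}(\bar P\|Q)-\mathrm{KL}(P_\lambda\|Q)=\lambda\mathrm{KL}(P\|P_\lambda)+(1-\lambda)\mathrm{KL}(\bar P\|P_\lambda)\) holds. I apply Pinsker's inequality \(\mathrm{KL}(P\|P_\lambda)\ge\tfrac12\|P-P_\lambda\|_1^2=\tfrac12(1-\lambda)^2\|P-\bar P\|_1^2\), and symmetrically for \(\bar P\). Summing the two terms gives the lower bound \(\tfrac{\lambda(1-\lambda)}{2}\|P-\bar P\|_1^2\). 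Marginalization is an \(\ell^1\)-contraction, so \(\|P-\bar P\|_1\ge\|\xi_t^j-\bar\xi_t^j\|_1\) and \(\|P-\bar P\|_1\ge\|\eta_t^j-\bar\eta_t^j\|_1\) for every \(t\). Hence \(\|P-\bar P\|_1\ge\|o^j-\bar o^j\|_{\onorm}\). Combining this with Steps 1 and 2, and cancelling \(T\log|\U|\), yields \eqref{eq:negative:causal:entropy:strong:convexity}.

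\emph{Consequences.} For \ref{lem:strong:concavity:entreg:I}, I multiply \eqref{eq:negative:causal:entropy:strong:convexity} by \(-\entparam\) to obtain \(\entparam\)-strong concavity of \(\Psi_{j,\entparam}\). Adding the concave map \(\Phi_j(\cdot;o)\), which is concave by the proof of Theorem~\ref{thm:existence:RA:MP:MFE:appen}, preserves \(\entparam\)-strong concavity via \eqref{eq:entropic:Phi:decomp}. For \ref{lem:strong:concavity:entreg:II}, a maximizer exists by the proof of Theorem~\ref{thm:existence:entropic:RA:MP:MFE:appen}. If \(o^j\neq\bar o^j\) were two maximizers, their midpoint would strictly exceed the common maximal value by at least \(\tfrac{\entparam}{8}\|o^j-\bar o^j\|_{\onorm}^2>0\), which is a contradiction, so the best response is a singleton.
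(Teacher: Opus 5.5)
Your argument is correct, and it reaches the inequality by a different route than the paper.

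\textbf{The paper's route.} It uses the first-order characterization.
\begin{enumerate}
\item On $\relint(\Occ^j)$ it computes the Bregman divergence of $R_j$ and identifies it with the path-space divergence between the two Markov laws: $D_{R_j}(o^j,\bar o^j)=\sum_{t,x}\xi_t(x)\,\mathrm{KL}(\pi_t(\cdot|x)\|\bar\pi_t(\cdot|x))=\mathrm{KL}(P_o\|P_{\bar o})$. This needs $P_o\ll P_{\bar o}$, which is why it restricts to the relative interior, where supports match.
\item It applies Pinsker and the same marginalization bound you use.
\item It converts the gradient inequality into \eqref{eq:negative:causal:entropy:strong:convexity} via the first-order/zeroth-order equivalence for strongly convex functions.
\item It extends to the boundary by pushing both points toward an interior point and using continuity of $R_j$.
\end{enumerate}

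\textbf{Your route.} You prove the zeroth-order inequality directly.
\begin{enumerate}
\item The uniform-policy reference law $Q$ charges every reachable path, so $R_j+T\log|\U|=\mathrm{KL}(P\|Q)$ holds on all of $\Occ^j$.
\item The compensation identity turns the Jensen gap into $\lambda\,\mathrm{KL}(P\|P_\lambda)+(1-\lambda)\,\mathrm{KL}(\bar P\|P_\lambda)$.
\item The Markovization step handles the fact that $P_\lambda$ is not Markov: conditioning the action on $x_t$ rather than $h_t$ can only lower the KL.
\end{enumerate}

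\textbf{Comparison.} Your route needs no differentiability, no relative-interior restriction, no support matching and no boundary limit. The price is the Markovization step, which the paper avoids because it never mixes path laws. Your argument also yields a variational fact the paper does not state: $R_j(o^j)+T\log|\U|$ is the minimum of $\mathrm{KL}(\cdot\|Q)$ over all path laws with one-time marginals $(\xi^j,\eta^j)$. The paper's route instead gives an exact identity for the Bregman divergence as a path KL.

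\textbf{Two points to make explicit.}
\begin{itemize}
\item The mixture $P_\lambda$ keeps the initial law $\mu_0^j$ and the kernels $f_t^j$, since $P_\lambda(h_t,u_t,x_{t+1})=P_\lambda(h_t,u_t)\,f_t^j(x_{t+1}|x_t,u_t)$. This is exactly why the transition terms vanish in the chain rule for $\mathrm{KL}(P_\lambda\|Q)$.
\item Use the compensation identity only for $\lambda\in(0,1)$. There $P,\bar P\ll P_\lambda$ and every divergence is finite; at $\lambda\in\{0,1\}$ one of $\mathrm{KL}(P\|P_\lambda)$, $\mathrm{KL}(\bar P\|P_\lambda)$ may be infinite, but the inequality is trivial at the endpoints anyway.
\end{itemize}
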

\end{myOCP}

\begin{proof}
The proof is inspired by a \edit{nice} argument presented in \cite{ref:neu2017} relating Bregman and Kullback–Leibler divergences along with some standard facts from convex optimization \cite{ref:beck2017first}.

Fix \(j \in [M]\).
We will simplify notation and denote \(\occset = \Occ^j\). Also, we suppress the population subcript \(j\) by writing \(o = (\xi,\eta)\), and \(\bar{o} \Let (\bar{\xi},\bar{\eta})\), and \(R \Let R_j\).
We first establish strong convexity of \(o \mapsto R(o)\) on the relative interior \(\relint(\occset)\) (the boundary case will be handled later).
To this end, take \(o = (\xi,\eta)\) and \(\bar{o} = (\bar{\xi},\bar{\eta})\) in \(\relint(\occset)\). 
For each fixed \(t\) and \(x\), define \((\xi,\zeta) \mapsto \dummyent(\xi,\zeta) \Let \sum_{u \in \U}\zeta(u)\log \frac{\zeta(u)}{\xi}\). For \(\xi>0\) and \(\zeta(u)>0\), we have that
\begin{align*}
    \frac{\partial \dummyent}{\partial \zeta(u)} = \log \frac{\zeta(u)}{\xi} + 1 \quad \text{and} \quad \frac{\partial \dummyent}{\partial \xi} = - \frac{\sum_{u \in \U}\zeta(u)}{\xi} = -1,
\end{align*}
where in the last equality we used the fact that \(\sum_{u \in \U}\zeta(u) = \xi\) (which is the occupation constraint). We consider the Bregman divergence of \(R\), which is given by 
\begin{align*}
(o,\bar{o}) \mapsto \bregdiv(o,\bar{o})\Let R(o)-R(\bar{o})-\left\langle\nabla R(\bar{o}),o-\bar{o}\right\rangle.
\end{align*}
Substituting the preceding derivatives gives
\begin{align*}
\bregdiv(o,\bar{o})&=R(o)-R(\bar{o})+\sum_{t=0}^{T-1}\sum_{x\in\X}\bigl(\xi_t(x)-\bar{\xi}_t(x)\bigr)- \nn\\& \qquad\sum_{t=0}^{T-1}\sum_{x\in\X}\sum_{u\in\U}\left(\log\frac{\bar{\eta}_t(x,u)}{\bar{\xi}_t(x)}+1\right)\bigl(\eta_t(x,u)-\bar{\eta}_t(x,u)\bigr).
\end{align*}
Using the occupation constraints \(\sum_{u\in\U}\bigl(\eta_t(x,u)-\bar{\eta}_t(x,u)\bigr)=\xi_t(x)-\bar{\xi}_t(x)\), and simplifying, we get
\begin{align*}
\bregdiv(o,\bar{o})&=R(o)-R(\bar{o})-\sum_{t=0}^{T-1}\sum_{x\in\X}\sum_{u\in\U}\log\frac{\bar{\eta}_t(x,u)}{\bar{\xi}_t(x)}\bigl(\eta_t(x,u)-\bar{\eta}_t(x,u)\bigr)\\
&=\sum_{t=0}^{T-1}\sum_{x\in\X}\sum_{u\in\U}\eta_t(x,u)\log\frac{\eta_t(x,u)/\xi_t(x)}{\bar{\eta}_t(x,u)/\bar{\xi}_t(x)}.
\end{align*}
Let \(\pi\) and \(\bar{\pi}\) be the Markov policies induced by \(o\) and \(\bar{o}\), respectively, so that
\begin{align*}
\pi_t(u|x)=\frac{\eta_t(x,u)}{\xi_t(x)}\quad \text{and} \quad \bar{\pi}_t(u|x)=\frac{\bar{\eta}_t(x,u)}{\bar{\xi}_t(x)}.
\end{align*}
Since \(\eta_t(x,u)=\xi_t(x)\pi_t(u|x)\), we obtain
\begin{align}\label{eq:breg:KL}
\bregdiv(o,\bar{o})=\sum_{t=0}^{T-1}\sum_{x\in\X}\xi_t(x)\KL\bigl(\pi_t(\cdot|x)\,\|\,\bar{\pi}_t(\cdot|x)\bigr).
\end{align}
We now express the right-hand side of \eqref{eq:breg:KL} as a path-space relative entropy. 
Let \(\Omega_{\mathrm{path}}\Let\X^{T+1}\times\U^T\) be the path space of finite trajectories, and we write a typical element of \(\Omega_{\mathrm{path}}\) as \(\omega=(x_0,u_0,\ldots,u_{T-1},x_T)\). 
For all \(\omega \in \Omega_{\mathrm{path}}\), define the path laws induced by \(o\) and \(\bar{o}\) by
\begin{align*}
P_o(\omega)\Let\mu_0^j(x_0)\prod_{t=0}^{T-1}\pi_t(u_t|x_t)f_t^j(x_{t+1}|x_t,u_t)
\end{align*}
and 
\begin{align*}
P_{\bar{o}}(\omega)\Let\mu_0^j(x_0)\prod_{t=0}^{T-1}\bar{\pi}_t(u_t|x_t)f_t^j(x_{t+1}|x_t,u_t).
\end{align*}
By the policy--occupation-measure equivalence (see \ref{step:two} in the proof of Theorem \ref{thm:existence:entropic:RA:MP:MFE:appen}) and the flow-conservation constraints, the \(x_t\)-marginals of \(P_o\) and \(P_{\bar{o}}\) are \(\xi_t\) and \(\bar{\xi}_t\), respectively, while their \((x_t,u_t)\)-marginals are \(\eta_t\) and \(\bar{\eta}_t\), respectively.
Indeed, \(P_o(x_t=x,u_t=u)=\xi_t(x)\pi_t(u|x)=\eta_t(x,u)\), and
\begin{align*}
P_o(x_{t+1}=y)=\sum_{x\in\X}\sum_{u\in\U}\eta_t(x,u)f_t^j(y|x,u)=\xi_{t+1}(y),
\end{align*}
with the analogous identities holding under \(P_{\bar{o}}\).

% \blue{Since \(o,\bar{o}\in\relint(\occset)\), their occupation measures have the same support on every state-action coordinate; hence the induced policies have the same support on all reachable state-action pairs, and, since the initial distribution and transition kernels are common, and thus \(P_o \ll P_{\bar{o}}\).}\sid{I have modified this part a little, check} 

Since \(o,\bar{o}\in\relint(\occset)\), the occupation measures \(\eta\) and \(\bar\eta\) have the same support. Indeed, for each coordinate \((t,x,u)\), either \(\eta_t(x,u)=0\) for every \(o\in\mathcal{O}\), or \(\eta_t(x,u)>0\) for every \(o\in\relint(\mathcal{O})\).
Let us verify that \(P_o \ll P_{\bar o}\).\footnote{Recall the ``\(\mu \ll \nu\)'' denotes the notion of absolute continuity between the measures \(\mu\) and \(\nu\).} Let \(\omega \Let (x_0,u_0,\ldots,u_{T-1},x_T)\in\Omega_{\mathrm{path}}\) be such that \(P_o(\omega)>0\). Then, for every \(t\in\{0,\ldots,T-1\}\), \(\eta_t(x_t,u_t) =  P_o(X_t=x_t,U_t=u_t)>0\). Since \(\eta\) and \(\bar\eta\) have the same support, it follows that \(\bar\eta_t(x_t,u_t)>0\). Consequently,
\begin{align*}
 \bar{\xi}_t(x_t)=\sum_{u\in \U}\bar{\eta}_t(x_t,u)>0 \quad \text{and} \quad \bar{\pi}_t(u_t\mid x_t) = \frac{\bar{\eta}_t(x_t,u_t)}{\bar{\xi}_t(x_t)}>0.   
\end{align*}
Also, \(P_o(\omega)>0\) implies \(\mu_0^j(x_0)>0\) and
\(f_t^j(x_{t+1}\mid x_t,u_t)>0\) for every \(t\).
Since \(P_o\) and \(P_{\bar o}\) have the same initial distribution and transition kernels, every factor defining \(P_{\bar o}(\omega)\) is therefore strictly positive. Hence, \(P_o(\omega)>0\) implies that \(P_{\bar o}(\omega)>0,\) and thus \(P_o\ll P_{\bar o}\). Therefore, for every path in the support of \(P_o\),
\begin{align*}
\frac{P_o(x_0,u_0,\ldots,u_{T-1},x_T)}{P_{\bar{o}}(x_0,u_0,\ldots,u_{T-1},x_T)}=\prod_{t=0}^{T-1}\frac{\pi_t(u_t|x_t)}{\bar{\pi}_t(u_t|x_t)}.
\end{align*}
It follows that
\begin{align*}
\KL(P_o\|P_{\bar{o}})&=\E^{P_o}\left[\log\frac{P_o}{P_{\bar{o}}}\right] =\sum_{t=0}^{T-1}\E^{P_o}\left[\log\frac{\pi_t(U_t|X_t)}{\bar{\pi}_t(U_t|X_t)}\right]\nn \\&=\sum_{t=0}^{T-1}\sum_{x\in\X}\sum_{u\in\U}\eta_t(x,u)\log\frac{\pi_t(u|x)}{\bar{\pi}_t(u|x)}\\
&=\sum_{t=0}^{T-1}\sum_{x\in\X}\xi_t(x)\KL\bigl(\pi_t(\cdot|x)\,\|\,\bar{\pi}_t(\cdot|x)\bigr).
\end{align*}
Comparing the preceding identity with \eqref{eq:breg:KL} yields
\begin{align}\label{eq:bregman:path:KL}
\bregdiv(o,\bar{o})=\KL(P_o\|P_{\bar{o}}).
\end{align}

Every \(\xi_t\) and \(\eta_t\) is a marginal of \(P_o\), and every \(\bar{\xi}_t\) and \(\bar{\eta}_t\) is the corresponding marginal of \(P_{\bar{o}}\). Since marginalization cannot increase the \(\ell_1\)-distance, for every \(t=0,\ldots,T\),
\begin{align*}
\|\xi_t-\bar{\xi}_t\|_1&=\sum_{x\in\X}\left|\sum_{\omega:X_t(\omega)=x}\bigl(P_o(\omega)-P_{\bar{o}}(\omega)\bigr)\right| \nn \\& \le \sum_{\omega\in\Omega_{\mathrm{path}}}\left|P_o(\omega)-P_{\bar{o}}(\omega)\right|=\|P_o-P_{\bar{o}}\|_1.
\end{align*}
Similarly, for every \(t=0,\ldots,T-1\), \(\|\eta_t-\bar{\eta}_t\|_1\le\|P_o-P_{\bar{o}}\|_1.\) Therefore,
\begin{align}\label{eq:occupation:path:distance}
\|o-\bar{o}\|_{\onorm}\le\|P_o-P_{\bar{o}}\|_1.
\end{align}
By Pinsker's inequality \cite[Section 1, Lemma 2]{ref:canonne2023} and \eqref{eq:bregman:path:KL}--\eqref{eq:occupation:path:distance},
\begin{align}\label{eq:breg:srcv:bound}
\bregdiv(o,\bar{o})=\KL(P_o\|P_{\bar{o}})\ge\frac{1}{2}\|P_o-P_{\bar{o}}\|_1^2\ge\frac{1}{2}\|o-\bar{o}\|_{\onorm}^2.
\end{align}
Thus, for every \(o,\bar{o}\in\relint(\occset)\), 
% we have the strong convexity of \(R(\cdot)\); i.e., the following holds
%
\begin{align}\label{eq:breg:gradient}
R(o)\ge R(\bar{o})+\left\langle\nabla R(\bar{o}),o-\bar{o}\right\rangle+\frac{1}{2}\|o-\bar{o}\|_{\onorm}^2.
\end{align}
From the equivalence between the first-order and inequality characterizations of strong convexity \cite[Theorem 5.24]{ref:beck2017first}, inequality \eqref{eq:breg:gradient} implies that, for every \(\lambda\in\lcrc{0}{1}\) and every \(o,\bar{o}\in\relint(\occset)\), 
\begin{align}\label{eq:strong:convexity:relative:interior}
R\bigl(\lambda o+(1-\lambda)\bar{o}\bigr)\le\lambda R(o)+(1-\lambda)R(\bar{o})-\frac{\lambda(1-\lambda)}{2}\|o-\bar{o}\|_{\onorm}^2.
\end{align}

We now extend the result to the boundary of \(\occset\). Choose \(o^\circ\in\relint(\occset)\). For arbitrary \(o,\bar{o}\in\occset\) and \(\delta\in\lcrc{0}{1}\), define
\begin{align*}
o_{\delta}\Let(1-\delta)o+\delta o^\circ \quad \text{and} \quad \bar{o}_\delta\Let(1-\delta)\bar{o}+\delta o^\circ.
\end{align*}
Then, \(o_{\delta},\bar{o}_\delta\in\relint(\occset)\), and hence
\begin{align*}
R\bigl(\lambda o_{\delta}+(1-\lambda)\bar{o}_\delta\bigr)\le\lambda R(o_{\delta})+(1-\lambda)R(\bar{o}_\delta)-\frac{\lambda(1-\lambda)}{2}\|o_{\delta}-\bar{o}_\delta\|_{\onorm}^2.
\end{align*}
Since \(o_{\delta}-\bar{o}_\delta=(1-\delta)(o-\bar{o}),\) we have \(\|o_{\delta}-\bar{o}_\delta\|_{\onorm}=(1-\delta)\|o-\bar{o}\|_{\onorm}.\)
Letting \(\delta\to0^+\) and using the continuity of \(R\), we obtain
\begin{align*}
R\bigl(\lambda o+(1-\lambda)\bar{o}\bigr)\le\lambda R(o)+(1-\lambda)R(\bar{o})-\frac{\lambda(1-\lambda)}{2}\|o-\bar{o}\|_{\onorm}^2.
\end{align*}
Therefore, \(R_j\) is \(1\)-strongly convex on \(\Occ^j\) with respect to \(\|\cdot\|_{\onorm}\). 
Moreover, \(\Psi_{j,\entparam}=-\entparam R_j\), and thus, it follows that \(\Psi_{j,\entparam}\) is \(\entparam\)-strongly concave on \(\Occ^j\).
We also know that, for every fixed \(o\in\Occ\), the mapping \(\tilde{o}^j\mapsto\Phi_j(\tilde{o}^j;o)\) is concave, and \(\Phi_{j,\entparam}(\tilde{o}^j;o)=\Phi_j(\tilde{o}^j;o)+\Psi_{j,\entparam}(\tilde{o}^j).\)
Hence, \(\Phi_{j,\entparam}(\cdot;o)\) is \(\entparam\)-strongly concave on \(\Occ^j\).

Finally, compactness of \(\Occ^j\) and continuity of \(\Phi_{j,\entparam}(\cdot;o)\) imply that \(\bestresp_{j,\entparam}(o)\) is nonempty. Suppose that two distinct points \(o_1^j,o_2^j\in\Occ^j\) are both maximizers, with common maximum value \(V\). Strong concavity with \(\lambda=\frac{1}{2}\) gives
\begin{align*}
\Phi_{j,\entparam}\Biggl(\frac{o_1^j+o_2^j}{2};o\Biggr) \ge V+\frac{\entparam}{8}\|o_1^j-o_2^j\|_{\onorm}^2 > V,
\end{align*}
which contradicts maximality. Therefore, \(\bestresp_{j,\entparam}(o)\) is a singleton for every \(o\in\Occ\). Our proof is complete.
\end{proof}

% ========================================
% ======= Thrm: eps = 1, contractn =======
% ========================================

We restate Theorem \ref{thm:contractivity:regularized:BR} in detail and provide a detailed proof.

\begin{myOCP}
\begin{theorem}[Contractivity of BR mapping when \(\eps=1\)]
\label{thm:contractivity:regularized:BR:appen}
Let \(\eps=1\), and fix \(\entparam>0\). Suppose that Assumption \ref{assum:eps:one:case} holds and let \(q_{\entparam} \Let \max_{j\in[M]}L_j/\entparam.\) Recall that, for \(o\in\Occ\), we have
\begin{align*}
\bestresp_{j,\entparam}(o)\Let\argmax_{\widetilde{o}^j\in\Occ^j}\Phi_{j,\entparam}(\widetilde o^j;o),
\end{align*}
and define \(\bestresp_{\entparam}(o)  \Let \left(\bestresp_{1,\entparam}(o),\ldots,\bestresp_{M,\entparam}(o)\right).\) Then~\newedit{,}
\begin{enumerate}[label=\textup{(\ref{thm:contractivity:regularized:BR}-\alph*)}, leftmargin=*, widest=b, align=left]

\item \newedit{For} every \( o,\bar o\in\Occ\) and \(j\in[M]\), \newedit{the following holds}
\begin{align*}
\left\|\bestresp_{j,\entparam}(o)-\bestresp_{j,\entparam}(\bar o)\right\|_{\onorm}\le\frac{L_j}{\entparam}\left\|o^j-\bar o^j\right\|_{\onorm}.
\end{align*}

\item Consequently, with \(\|o-\bar o\|_{\onorm,\infty}\Let\max_{j\in[M]}\|o^j-\bar o^j\|_{\onorm},\)
we have the estimate \(\left\|\bestresp_{\entparam}(o)-\bestresp_{\entparam}(\bar o)\right\|_{\onorm,\infty}\le q_\entparam\|o-\bar o\|_{\onorm,\infty}.\)
Therefore, if \(\entparam>\max_{j\in[M]} L_j,\) then \(\bestresp_{\entparam}\) is a contraction on \(\Occ\). Consequently, for every feasible initial flow \(\mu_0\), the iteration \(\mu(n+1)=\Gamma_{\entparam}(\mu(n))\) converges to the unique equilibrium mean-field flow \(\mu_{\entparam}^\star\) in Definition \ref{def:entropic:RA:MP:MFE}.
\end{enumerate}
\end{theorem}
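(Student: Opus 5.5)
The plan rests on one observation. When \(\eps=1\), the total-variation constraint in \eqref{eq:ambiguity:set:appen} is vacuous, because \(\dtv{\cdot,\cdot}\le 1\) always. Hence \(\ambset(\xi(o),1)=\feasflow\) is the fixed set of dynamically feasible opponent flows from \eqref{eq:feas:flow:set}, and it does not depend on \(o\). Under the decomposition of Assumption~\ref{assum:eps:one:case}, the payoff in \eqref{eq:entropic:Phi:decomp} then splits as
\begin{align*}
\Phi_{j,\entparam}(\tilde o^j;o)=\underbrace{\min_{\nu^{-j}\in\feasflow}\sum_{t,x}\tilde\xi_t^j(x)a_t^j(x,\nu_t^{-j})}_{=:A_j(\tilde o^j)}+\sum_{t,x}\tilde\xi_t^j(x)b_t^j(x,\xi_t^j)+\Psi_{j,\entparam}(\tilde o^j).
\end{align*}
Here \(A_j\) is concave (an infimum of linear maps), finite because \(a_t^j\) is bounded, and independent of \(o\). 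The only dependence on \(o\) is therefore through the linear term \(\langle \tilde\xi^j, b^j(\xi^j)\rangle\), and only through \(o^j\). This explains why the bound in part (a) involves only \(\|o^j-\bar o^j\|_{\onorm}\).

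Next I would run the standard perturbation argument for strongly concave maximization. Fix \(o,\bar o\in\Occ\). Let \(p\Let\bestresp_{j,\entparam}(o)\) and \(\bar p\Let\bestresp_{j,\entparam}(\bar o)\); these are singletons by Lemma~\ref{lem:strong:concavity:entreg}. Write \(h(\tilde o^j)\Let A_j(\tilde o^j)+\Psi_{j,\entparam}(\tilde o^j)\), which is \(\entparam\)-strongly concave by Lemma~\ref{lem:strong:concavity:entreg}, and let \(g_o(\tilde o^j)\Let\sum_{t,x}\tilde\xi^j_t(x)b^j_t(x,\xi^j_t)\). I would avoid gradients, since \(\Psi\) is not differentiable on the boundary and \(A_j\) is nonsmooth. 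Instead I would use the quadratic-growth inequality that follows from strong concavity at a maximizer: for \(\lambda\in(0,1)\), plug \(\lambda\bar p+(1-\lambda)p\) into the defining inequality, use the optimality of \(p\), and let \(\lambda\to0\). This gives
\begin{align*}
h(p)+g_o(p)-h(\bar p)-g_o(\bar p)\ge \tfrac{\entparam}{2}\|p-\bar p\|_{\onorm}^2,
\end{align*}
and symmetrically with \((o,\bar o)\) and \((p,\bar p)\) swapped. Adding the two inequalities cancels \(h\) and yields
\begin{align*}
\entparam\|p-\bar p\|_{\onorm}^2\le (g_o-g_{\bar o})(p)-(g_o-g_{\bar o})(\bar p)=\sum_{t,x}(p_\xi-\bar p_\xi)_t(x)\bigl(b_t^j(x,\xi_t^j)-b_t^j(x,\bar\xi_t^j)\bigr).
\end{align*}

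The main obstacle is to bound the right-hand side by \(L_j\|p-\bar p\|_{\onorm}\|o^j-\bar o^j\|_{\onorm}\) with exactly the constant \(L_j=\sum_{t=1}^T\ell_{j,t}\), which omits \(t=0\). At \(t=0\) both \(p_\xi\) and \(\bar p_\xi\) equal \(\mu_0^j\), so that term vanishes; this is what justifies dropping \(\ell_{j,0}\). For \(t\ge1\), Hölder's inequality and Assumption~\ref{assum:eps:one:case} bound each term by \(\|p_{\xi,t}-\bar p_{\xi,t}\|_1\,\ell_{j,t}\|\xi_t^j-\bar\xi_t^j\|_1\le \ell_{j,t}\|p-\bar p\|_{\onorm}\|o^j-\bar o^j\|_{\onorm}\). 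Summing over \(t\ge1\) and dividing by \(\entparam\|p-\bar p\|_{\onorm}\) (the case \(p=\bar p\) is trivial) gives part (a).

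Part (b) follows by taking the maximum over \(j\). When \(\entparam>\max_j L_j\), we have \(q_\entparam<1\). Since \(\Occ\) is a closed subset of a finite-dimensional space and therefore complete, Banach's fixed-point theorem gives a unique fixed point \(o^\star_\entparam\) and geometric convergence of the iterates \(o(n+1)=\bestresp_\entparam(o(n))\). To transfer this to \(\mu\), I would note that \(\Gamma_\entparam(\xi(o))=\xi(\bestresp_\entparam(o))\); this uses the policy--occupation equivalence and the fact that the optimization depends on \(o\) only through \(\xi(o)\). For a feasible \(\mu(0)=\xi(o(0))\), this gives \(\mu(n)=\xi(o(n))\to\xi(o^\star_\entparam)=\mu^\star_\entparam\), since \(\|\xi(o)-\xi(\bar o)\|\le\|o-\bar o\|_{\onorm,\infty}\). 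Uniqueness of the equilibrium flow follows because every entropy-regularized RA-MP-MFE gives rise to a fixed point of \(\bestresp_\entparam\), as in the proof of Theorem~\ref{thm:existence:entropic:RA:MP:MFE:appen}.
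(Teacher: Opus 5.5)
Your proposal is correct and follows the paper's proof essentially step by step. You use the observation $\ambset(\xi(o),1)=\feasflow$ and the split $\Phi_j=A_j+C_j$. You then obtain the quadratic-growth inequality from $\entparam$-strong concavity via the $\lambda\to0^+$ limit and add the two inequalities so that only the $b$-cross term survives. The Hölder/Lipschitz bound with the vanishing $t=0$ term follows, and you close with Banach's theorem plus the identity $\xi(\bestresp_{\entparam}(o))=\Gamma_{\entparam}(\xi(o))$. The only difference is cosmetic: you group $A_j+\Psi_{j,\entparam}$ into one strongly concave $h$, whereas the paper cancels the entropy and $A_j$ terms separately.
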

\end{myOCP}

\begin{proof}
Fix \(j\in[M]\) and \(o,\bar{o}\in\Occ\) and let us write \(o^j\Let(\xi^j,\eta^j)\) and \(\bar{o}^j\Let(\bar{\xi}^j,\bar{\eta}^j)\).
From Lemma \ref{lem:strong:concavity:entreg}, the mapping \(\tilde{o}^j\mapsto\Phi_{j,\entparam}(\tilde{o}^j;o)\) is \(\entparam\)-strongly concave on \(\Occ^j\) with respect to \(\|\cdot\|_{\onorm}\), and \(\bestresp_{j,\entparam}(o)\) is a singleton for every \(o\in\Occ\).
Thus, we may identify \(\bestresp_{j,\entparam}(o)\) with its unique element and define
\begin{align*}
z^j\Let\bestresp_{j,\entparam}(o)\quad\text{and}\quad \bar z^j\Let\bestresp_{j,\entparam}(\bar{o}),
\end{align*}
and \(z^j\Let(n^j,\zeta^j)\) and \(\bar z^j\Let(\bar n^j,\bar\zeta^j)\).\footnote{\(n\) and \(\zeta\) are dummies for \(\xi\) and \(\eta\).}

Recall the expression of the ambiguity set in \eqref{eq:amb:set:feas}. Since \(\dtv{\nu,\mu}\le 1\) for every \(\nu,\mu\in\P(\X)\), we see that 
\begin{align}\label{eq:fixed:amb:set}
\ambset(\xi(o),1)=\feasflow\quad\text{for every }\,o\in\Occ.
\end{align}
In particular, the ambiguity set is independent of the nominal occupation profile.
Now, let \(\tilde{o}^j\Let(\tilde \xi^j,\tilde\eta^j)\in\Occ^j\). By \eqref{eq:fixed:amb:set} and the structure of the reward \(r_t^j(\cdot)\), we have
\begin{align*}
\Phi_j(\tilde{o}^j;o)
&=\inf_{\widehat\mu^{-j}\in\feasflow}\sum_{t=0}^{T}\sum_{x\in\X}\tilde \xi_t^j(x)\left[a_t^j(x,\widehat\mu_t^{-j})+b_t^j(x,\xi_t^j)\right]\\
&=\inf_{\widehat\mu^{-j}\in\feasflow}\sum_{t=0}^{T}\sum_{x\in\X}\tilde \xi_t^j(x)a_t^j(x,\widehat\mu_t^{-j})
+\sum_{t=0}^{T}\sum_{x\in\X}\tilde \xi_t^j(x)b_t^j(x,\xi_t^j).
\end{align*}
Suppose that, in the above expression,
\begin{align*}
A_j(\tilde{o}^j) \Let \inf_{\widehat\mu^{-j}\in\feasflow}\sum_{t=0}^{T}\sum_{x\in\X}\tilde \xi_t^j(x)a_t^j(x,\widehat\mu_t^{-j}),\,\,\text{and}\,\,
C_j(\tilde{o}^j;\xi^j) \Let\sum_{t=0}^{T}\sum_{x\in\X}\tilde \xi_t^j(x)b_t^j(x,\xi_t^j).
\end{align*}
The mapping \(A_j(\cdot)\) is independent of the nominal profile \(o\), while \(C_j(\tilde{o}^j;\xi^j)\) depends on \(o\) only through the nominal state-flow \(\xi^j\). Consequently, we can  write
\begin{align}\label{eq:cPhi:decom}
\Phi_j(\tilde{o}^j;o)=A_j(\tilde{o}^j)+C_j(\tilde{o}^j;\xi^j).
\end{align}

We first record a consequence of the strong concavity established in Lemma \ref{lem:strong:concavity:entreg}. 
Let \(v^j\in\Occ^j\) be arbitrary and let \(\lambda\in(0, 1)\). Since \(\Occ^j\) is convex, \((1-\lambda)z^j+\lambda v^j\in\Occ^j.\)
By the \(\entparam\)-strong concavity of \(\Phi_{j,\entparam}(\cdot;o)\),
\begin{align*}
\Phi_{j,\entparam}\bigl((1-\lambda)z^j+\lambda v^j;o\bigr)
&\ge(1-\lambda)\Phi_{j,\entparam}(z^j;o)+\lambda\Phi_{j,\entparam}(v^j;o)+\frac{\entparam\lambda(1-\lambda)}{2}\|z^j-v^j\|_{\onorm}^2.
\end{align*}
Since \(z^j\) maximizes \(\Phi_{j,\entparam}(\cdot;o)\) over \(\Occ^j\), we also have \(\Phi_{j,\entparam}(z^j;o)\ge\Phi_{j,\entparam}\bigl((1-\lambda)z^j+\lambda v^j;o\bigr).\)
Combining the preceding inequalities and dividing by \(\lambda>0\) and letting \(\lambda\to0^+\), it follows that
\begin{align}\label{eq:contractivity:quadratic:growth}
\Phi_{j,\entparam}(z^j;o)-\Phi_{j,\entparam}(v^j;o)
\ge\frac{\entparam}{2}\|z^j-v^j\|_{\onorm}^2.
\end{align}
Taking \(v^j=\bar z^j\) in \eqref{eq:contractivity:quadratic:growth} gives \(\Phi_{j,\entparam}(z^j;o)-\Phi_{j,\entparam}(\bar z^j;o)
\ge\frac{\entparam}{2}\|z^j-\bar z^j\|_{\onorm}^2.\). Applying the same argument to \(\Phi_{j,\entparam}(\cdot;\bar{o})\), whose maximizer is \(\bar z^j\), gives \(\Phi_{j,\entparam}(\bar z^j;\bar{o})-\Phi_{j,\entparam}(z^j;\bar{o})
\ge\frac{\entparam}{2}\|z^j-\bar z^j\|_{\onorm}^2.\)
Adding the preceding two quantities, we obtain
\begin{align}\label{eq:contractivity:quadratic:growth:sum}
\entparam\|z^j-\bar z^j\|_{\onorm}^2
\le\Phi_{j,\entparam}(z^j;o)-\Phi_{j,\entparam}(\bar z^j;o)+\Phi_{j,\entparam}(\bar z^j;\bar{o})-\Phi_{j,\entparam}(z^j;\bar{o}).
\end{align}

Using \eqref{eq:entropic:Phi:decomp}, the right-hand side of \eqref{eq:contractivity:quadratic:growth:sum} is equal to
\begin{align*}
&\Phi_j(z^j;o)-\Phi_j(\bar z^j;o)+\Phi_j(\bar z^j;\bar{o})-\Phi_j(z^j;\bar{o}) \nn \\& +\Psi_{j,\entparam}(z^j)-\Psi_{j,\entparam}(\bar z^j)
+\Psi_{j,\entparam}(\bar z^j)-\Psi_{j,\entparam}(z^j)\\
&=\Phi_j(z^j;o)-\Phi_j(\bar z^j;o)+\Phi_j(\bar z^j;\bar{o})-\Phi_j(z^j;\bar{o}).
\end{align*}
Using the decomposition \eqref{eq:cPhi:decom}, basic algebra yields 
\begin{align*}
&\Phi_j(z^j;o)-\Phi_j(\bar z^j;o)+\Phi_j(\bar z^j;\bar{o})-\Phi_j(z^j;\bar{o})=C_j(z^j;\xi^j) \nn \\& - C_j(\bar z^j;\xi^j)-C_j(z^j;\bar{\xi}^j)+C_j(\bar z^j;\bar{\xi}^j).
\end{align*}
Therefore, \eqref{eq:contractivity:quadratic:growth:sum} implies that
\begin{align}\label{eq:contractivity:cross:increment}
\entparam\|z^j-\bar z^j\|_{\onorm}^2 \le C_j(z^j;\xi^j)-C_j(\bar z^j;\xi^j)
-C_j(z^j;\bar{\xi}^j)+C_j(\bar z^j;\bar{\xi}^j).
\end{align}
By the definition of \(C_j(\cdot;\cdot)\), the right-hand side of \eqref{eq:contractivity:cross:increment} is equal to
\begin{align*}
\sum_{t=0}^{T}\sum_{x\in\X}\bigl(n_t^j(x)-\bar n_t^j(x)\bigr)
\bigl(b_t^j(x,\xi_t^j)-b_t^j(x,\bar{\xi}_t^j)\bigr).
\end{align*}
Consequently,
\begin{align*}
\entparam\|z^j-\bar z^j\|_{\onorm}^2
&\le\left|\sum_{t=0}^{T}\sum_{x\in\X}\bigl(n_t^j(x)-\bar n_t^j(x)\bigr)
\bigl(b_t^j(x,\xi_t^j)-b_t^j(x,\bar{\xi}_t^j)\bigr)\right|  \nn \\& 
\le\sum_{t=0}^{T}\|n_t^j-\bar n_t^j\|_1
\left\|b_t^j(\cdot,\xi_t^j)-b_t^j(\cdot,\bar{\xi}_t^j)\right\|_\infty\\
&\le\sum_{t=0}^{T}\ell_{j,t}\|n_t^j-\bar n_t^j\|_1\|\xi_t^j-\bar{\xi}_t^j\|_1.
\end{align*}
Since every occupation measure in \(\Occ^j\) has the prescribed initial distribution \(\mu_0^j\), we have \(\xi_0^j=\bar{\xi}_0^j=n_0^j=\bar n_0^j=\mu_0^j.\) Hence, the term corresponding to \(t=0\) vanishes, and therefore
\begin{align*}
\entparam\|z^j-\bar z^j\|_{\onorm}^2
&\le\sum_{t=1}^{T}\ell_{j,t}\|n_t^j-\bar n_t^j\|_1\|\xi_t^j-\bar{\xi}_t^j\|_1  \le\|z^j-\bar z^j\|_{\onorm}\left(\sum_{t=1}^{T}\ell_{j,t}\right)
\max_{1\le t\le T}\|\xi_t^j-\bar{\xi}_t^j\|_1\\
&=L_j\|z^j-\bar z^j\|_{\onorm}\max_{1\le t\le T}\|\xi_t^j-\bar{\xi}_t^j\|_1.
\end{align*}
If \(z^j=\bar z^j\), then the desired estimate holds trivially. Otherwise, dividing the preceding inequality by \(\entparam\|z^j-\bar z^j\|_{\onorm}\) gives us the estimate 
\begin{align}\label{eq:contract:lipschitz:final}
\left\|\bestresp_{j,\entparam}(o)-\bestresp_{j,\entparam}(\bar{o})\right\|_{\onorm}
\le\frac{L_j}{\entparam}\max_{1\le t\le T}\|\xi_t^j-\bar{\xi}_t^j\|_1  \le \frac{L_j}{\entparam}\|o^j-\bar{o}^j\|_{\onorm},
\end{align}
where the last inequality follows from the fact that \(\|\cdot\|_{\onorm}\), \(\max_{1\le t\le T}\|\xi_t^j-\bar{\xi}_t^j\|_1\le\|o^j-\bar{o}^j\|_{\onorm}.\) Finally, taking the maximum over \(j\in[M]\) in \eqref{eq:contract:lipschitz:final}, we obtain
\begin{align*}
\left\|\bestresp_{\entparam}(o)-\bestresp_{\entparam}(\bar{o})\right\|_{\onorm,\infty}
&=\max_{j\in[M]}\left\|\bestresp_{j,\entparam}(o)-\bestresp_{j,\entparam}(\bar{o})\right\|_{\onorm}\nn \\& \le \max_{j\in[M]}\frac{L_j}{\entparam}\|o^j-\bar{o}^j\|_{\onorm} = q_{\entparam}\|o-\bar{o}\|_{\onorm,\infty}.
\end{align*}
If \(\entparam>\max_{j\in[M]}L_j\), then \(q_{\entparam}<1\). Therefore, \(\bestresp_{\entparam}:\Occ\to\Occ\) is a contraction with respect to \(\|\cdot\|_{\onorm,\infty}\).

Since \(\Occ\) is nonempty and compact and \(q_{\entparam}<1\), Banach's fixed-point theorem \cite{ref:AliBor-06} gives a unique fixed point \(o_{\entparam}^\star\), to which all iterates of \(\bestresp_{\entparam}\) converge. 
By Lemma~\ref{lem:strong:concavity:entreg}, the best-response occupation measure is unique. Together with the policy--occupation-measure equivalence, this makes \(\Gamma_{\entparam}\) single-valued and thus
\begin{align*}
\xi(\bestresp_{\entparam}(o))=\Gamma_{\entparam}(\xi(o))\quad \text{for all }o\in\Occ.
\end{align*}
Hence, by continuity of \(\xi\), the mean-field iteration converges from every feasible initial flow to \(\mu_{\entparam}^\star\Let\xi(o_{\entparam}^\star)\). 
The same equivalence shows that \(o_{\entparam}^\star\) yields an equilibrium in Definition \ref{def:entropic:RA:MP:MFE} and that every such equilibrium induces a fixed point of \(\bestresp_{\entparam}\). Uniqueness of \(o_{\entparam}^\star\) therefore gives uniqueness of the equilibrium mean-field flow.
\end{proof}

% ========================================
% ===== Thrm: 0 < eps < 1, contractn =====
% ========================================

Let us recall Assumptions \ref{assum:eps:one:case} and \ref{assum:eps:0:1} along with several quantities defined therein. We defined \(\ell_{j,t}^a\Let\sum_{k\neq j}\max_{x\in\X} \|A_t^{j,k}(x,\cdot)\|_\infty\) and 
\(L_j^a\Let\sum_{t=1}^T\ell_{j,t}^a.\) Further, recall \(\Occ^j\), \(\Occ\), and \(\xi(o)\) from the definition of occupation measures and properties \eqref{occ:constraint:1}--\eqref{occ:constraint:2}. For \(\overline{o}^j\Let(\overline{\xi}^j,\overline{\eta}^j)\in\Occ^j\) and \(\entparam>0\), define
\begin{align}
\overline{o}^j \mapsto \Psi_{j,\entparam}(\overline{o}^j)
\Let -\entparam\sum_{t=0}^{T-1}\sum_{x\in\X}\sum_{u\in\U}
\overline{\eta}_t^j(x,u) \log\frac{\overline{\eta}_t^j(x,u)}{\overline{\xi}_t^j(x)}. \nn
\end{align}
We now restate Theorem \ref{thm:contractivity:inner:regularized:BR} and then record a detailed proof.

\begin{myOCP}
\begin{theorem}[Contractivity of BR mapping when \(\eps \in (0,1)\)]\label{thm:contractivity:inner:regularized:BR:appen}
Fix \(\eps\in(0,1)\) and \(\entparam>0\), and suppose that Assumptions~\ref{assum:eps:one:case} and~\ref{assum:eps:0:1} hold.
For \(\sigma>0\) and \(\widehat\mu^{-j}=\{\widehat\mu_t^k\}_{k\neq j,\;t=0}^T\), let \(\|\widehat\mu^{-j}\|_2^2\Let\sum_{k\neq j}\sum_{t=0}^T\sum_{x\in\X}|\widehat\mu_t^k(x)|^2\), and define the inner-regularized payoff
\begin{align}\label{eq:inner:reg:appen}
\Phi_{j,\entparam}^{\sigma}(\widetilde{o}^j;o)&\Let\min_{\widehat\mu^{-j}\in\ambset(\xi(o),\eps)}\left[\sum_{t=0}^T\sum_{x\in\X}\widetilde\xi_t^j(x)r_t^j\bigl(x,\xi_t^j,\widehat\mu_t^{-j}\bigr)+\frac{\sigma}{2}\|\widehat\mu^{-j}\|_2^2\right] \nn \\& \qquad\qquad+\Psi_{j,\entparam}(\widetilde{o}^j).
\end{align}
Define the best response mappings \(\bestresp_{j,\entparam}^{\sigma}(o)\Let\argmax_{\widetilde{o}^j\in\occset^j}\Phi_{j,\entparam}^{\sigma}(\widetilde{o}^j;o)\) and \(\bestresp_{\entparam}^{\sigma}(o)\Let(\bestresp_{1,\entparam}^{\sigma}(o),\ldots,\bestresp_{M,\entparam}^{\sigma}(o))\).
Then, for every \(j\in[M]\), there exists a constant \(\Lambda_{j,\eps}\ge0\), independent of \(\sigma\), such that, for every \(\sigma>0\) and \(o,\bar{o}\in\Occ\),
\begin{align}\label{eq:inner:regularized:BR:lipschitz:appen}
\left\|\bestresp_{j,\entparam}^{\sigma}(o)-\bestresp_{j,\entparam}^{\sigma}(\bar{o})\right\|_{\onorm}\le\frac{L_j+2L_j^a\Lambda_{j,\eps}}{\entparam}\|o-\bar{o}\|_{\onorm,\infty}.
\end{align}
Consequently, defining \(q_{\entparam,\eps}\Let\max_{j\in[M]}(L_j+2L_j^a\Lambda_{j,\eps})/\entparam\), we have
\begin{align*}
\left\|\bestresp_{\entparam}^{\sigma}(o)-\bestresp_{\entparam}^{\sigma}(\bar{o})\right\|_{\onorm,\infty}\le q_{\entparam,\eps}\|o-\bar{o}\|_{\onorm,\infty}.
\end{align*}
If \(\entparam>\max_{j\in[M]}(L_j+2L_j^a\Lambda_{j,\eps})\), then \(\bestresp_{\entparam}\) and \(\bestresp_{\entparam}^{\sigma}\), for every \(\sigma>0\), are contractions on \(\Occ\) in \(\|\cdot\|_{\onorm,\infty}\) with the common factor \(q_{\entparam,\eps}<1\). Their unique fixed points \(o_{\entparam}^\star\) and \(o_{\entparam,\sigma}^\star\), respectively, satisfy
\begin{align*}
\lim_{\sigma\to0^+}\|o_{\entparam,\sigma}^\star-o_{\entparam}^\star\|_{\onorm,\infty}=0.
\end{align*}
Moreover, for every \(\mu_0\), the iteration \(\mu(n+1)=\Gamma_{\entparam}(\mu(n))\) converges to \(\mu_{\entparam}^\star\Let\xi(o_{\entparam}^\star)\), the unique equilibrium mean-field flow in Definition~\ref{def:entropic:RA:MP:MFE}.
\end{theorem}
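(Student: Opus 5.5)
The plan is to follow the template of Theorem~\ref{thm:contractivity:regularized:BR:appen}, with one new ingredient: a Lipschitz estimate for the inner minimizer with respect to the nominal flow.

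\textbf{Step 1: reduction to an inner problem.} Fix \(j\), \(\sigma>0\) and \(o\in\Occ\). By Assumption~\ref{assum:eps:0:1}, the inner objective is
\[
\sum_{t,x}\widetilde\xi_t^j(x)\Bigl[c_t^j(x)+\sum_{k\neq j}\sum_y A_t^{j,k}(x,y)\widehat\mu_t^k(y)+b_t^j(x,\xi_t^j)\Bigr]+\tfrac{\sigma}{2}\|\widehat\mu^{-j}\|_2^2 .
\]
This is \(\sigma\)-strongly convex in \(\widehat\mu^{-j}\) over the polytope \(\ambset(\xi(o),\eps)\), which is nonempty and compact by Proposition~\ref{prop:ambiguity:set:appen}. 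Hence the minimizer \(\widehat\mu^{-j}_\sigma(\widetilde o^j;o)\) is unique. It is also linear in \(\widetilde\xi^j\) for each fixed \(\widehat\mu^{-j}\), so the minimum is concave in \(\widetilde o^j\).

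Lemma~\ref{lem:strong:concavity:entreg} then makes \(\Phi^\sigma_{j,\entparam}(\cdot;o)\) \(\entparam\)-strongly concave with a unique maximizer \(z^j\). Repeating the quadratic-growth argument from Theorem~\ref{thm:contractivity:regularized:BR:appen} with \(z^j,\bar z^j\) gives
\[
\entparam\|z^j-\bar z^j\|_{\onorm}^2\le \Phi^\sigma_j(z^j;o)-\Phi^\sigma_j(\bar z^j;o)+\Phi^\sigma_j(\bar z^j;\bar o)-\Phi^\sigma_j(z^j;\bar o),
\]
where \(\Phi^\sigma_j\) denotes the minimum without \(\Psi\). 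The entropy terms cancel as before.

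\textbf{Step 2: splitting the cross increment.} The \(b\)-part is handled exactly as before and yields at most \(L_j\|z^j-\bar z^j\|_{\onorm}\|o^j-\bar o^j\|_{\onorm}\).

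For the \(a\)-part, let \(m(\widetilde o;o)\) be the inner min of the \(a\)-terms plus the \(\sigma\)-term. Take \(\widehat\mu\in\ambset(\xi(o),\eps)\) optimal for \(\widetilde o=\bar z^j\). Take \(\widehat\nu\in\ambset(\xi(\bar o),\eps)\) optimal for \(\widetilde o=z^j\), together with nearby feasible points in the other sets. Bounding \(m(z;o)-m(\bar z;o)\) by a difference of linear terms at the minimizers gives
\[
m(z;o)-m(\bar z;o)\le \langle z-\bar z, A\widehat\mu\rangle .
\]
The other pair is treated similarly. The remaining error is controlled by how far \(\ambset(\xi(o),\eps)\) and \(\ambset(\xi(\bar o),\eps)\) are apart in Hausdorff distance.

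\textbf{Step 3: Hausdorff–Lipschitz bound (main obstacle).} The key claim is
\[
d_H\bigl(\ambset(\mu,\eps),\ambset(\bar\mu,\eps)\bigr)\le\Lambda_{j,\eps}\max_{k,t}\|\mu^k_t-\bar\mu^k_t\|_1 .
\]
This holds uniformly over dynamically feasible nominal flows, and with \(\Lambda_{j,\eps}\) independent of \(\sigma\).

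I would prove it with the interpolation from Step~\ref{step:six}. Given \(\nu\in\ambset(\mu,\eps)\), set \(\nu'=(1-\lambda)\nu+\lambda\bar\mu^{-j}\) with \(\lambda=\Delta/(\eps+\Delta)\), where \(\Delta=\max d_{\TV}(\mu,\bar\mu)\). Then \(\nu'\in\feasflow\) by convexity. Its TV distance to \(\bar\mu\) is at most \((1-\lambda)(\eps+\Delta)=\eps\), so \(\nu'\in\ambset(\bar\mu,\eps)\). Moreover \(\|\nu'-\nu\|\le\lambda\cdot 2\le 2\Delta/\eps\).

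This gives \(\Lambda_{j,\eps}\sim 1/\eps\), up to the norm convention. This is where \(\eps>0\) is essential: the constant blows up as \(\eps\to0\).

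Plugging the Hausdorff bound into Step 2, each transfer costs at most
\[
\sum_{t\ge1}\|z_t-\bar z_t\|_1\,\ell^a_{j,t}\,\Lambda_{j,\eps}\|o-\bar o\|_{\onorm,\infty}.
\]
The \(t=0\) term vanishes because \(\widehat\mu_0=\mu_0\) is fixed. Two transfers give the factor \(2L_j^a\Lambda_{j,\eps}\).

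One subtlety is that the \(\sigma\)-term is evaluated at different points. I would handle it by comparing the minimum values of the same functional over the two sets and keeping only the terms that are linear in \(\widetilde o\). The \(\widetilde o\)-independent quantities (the \(\sigma\)-part and the values at the fixed minimizers) cancel in the four-term combination, because each appears with opposite signs for \(z\) and \(\bar z\). This is the most delicate bookkeeping, and I would check it carefully.

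Dividing by \(\entparam\|z^j-\bar z^j\|_{\onorm}\) yields \eqref{eq:inner:regularized:BR:lipschitz:appen}. The same argument with \(\sigma=0\) (unique maximizer still by Lemma~\ref{lem:strong:concavity:entreg}) gives contractivity of \(\bestresp_\entparam\).

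\textbf{Step 4: limit \(\sigma\to0^+\) and convergence.} Since \(\sigma\|\widehat\mu\|_2^2\le\sigma(M-1)(T+1)\), we have \(|\Phi^\sigma_{j,\entparam}-\Phi_{j,\entparam}|\le C\sigma\) uniformly. Quadratic growth \eqref{eq:contractivity:quadratic:growth} then gives
\[
\|\bestresp^\sigma_\entparam(o)-\bestresp_\entparam(o)\|\le 2\sqrt{C\sigma/\entparam}.
\]
The standard contraction perturbation bound then gives
\[
\|o^\star_{\entparam,\sigma}-o^\star_\entparam\|\le\frac{2\sqrt{C\sigma/\entparam}}{1-q_{\entparam,\eps}}\to0 .
\]
The mean-field iteration converges exactly as at the end of the proof of Theorem~\ref{thm:contractivity:regularized:BR:appen}, via \(\xi(\bestresp_\entparam(o))=\Gamma_\entparam(\xi(o))\).
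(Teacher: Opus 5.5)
Your Steps 1 and 4 are sound and match the paper. Steps 2--3, however, contain a genuine gap. Your interpolation does establish a Hausdorff--Lipschitz bound on $\mu\mapsto\ambset(\mu,\eps)$, with $\Lambda\sim 2/\eps$. That bound cannot by itself give an estimate of the \emph{product} form $\|z^j-\bar z^j\|_{\onorm}\,\|o-\bar o\|_{\onorm,\infty}$ on the opponent part of the cross increment.

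Write that part as $\Theta(z^j)-\Theta(\bar z^j)$, where $\Theta(\widetilde\xi)\Let m(\widetilde\xi;o)-m(\widetilde\xi;\bar o)$. Let $c_j(\widetilde\xi)$ be the coefficient vector of $\widehat\mu^{-j}$, which is linear in $\widetilde\xi$. You then have two options, and neither works.
\begin{itemize}
\item Use one adversarial point for both $z^j$ and $\bar z^j$, so the $\sigma$-terms cancel. At best you get $\langle c_j(z^j-\bar z^j),\widehat\mu-\widehat\nu\rangle$. Here $\widehat\mu$ minimizes the cost of $\bar z^j$ over $\ambset(\xi(o),\eps)$ and $\widehat\nu$ minimizes the cost of $z^j$ over $\ambset(\xi(\bar o),\eps)$. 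They minimize different functionals, so $\|\widehat\mu-\widehat\nu\|$ is not small.
\item Move each minimizer to a nearby point of the other set. You get terms like $\langle c_j(z^j),\widehat\nu'-\widehat\nu\rangle+\tfrac{\sigma}{2}(\|\widehat\nu'\|_2^2-\|\widehat\nu\|_2^2)$. These are $O(d_H)$ but carry no factor $\|z^j-\bar z^j\|$. Their $\sigma$-parts also do not cancel, because they are evaluated at different points.
\end{itemize}
The second route only yields $\entparam\|z^j-\bar z^j\|_{\onorm}^2\le C\|o-\bar o\|_{\onorm,\infty}$. That is a H\"older-$\tfrac12$ estimate, not a contraction.

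No bookkeeping can fix this using Hausdorff information alone. The product bound amounts to $\Theta$ being Lipschitz with constant proportional to $\|o-\bar o\|_{\onorm,\infty}$. By Danskin, that means the minimizers of the \emph{same} cost over the two sets must be close, which Hausdorff closeness does not imply.

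For a concrete case, take $c_j=\mathrm{id}$, $\sigma=0$, and the segments $A_0=[(-1,0),(1,0)]$ and $A_\delta=[(-1,-\delta),(1,\delta)]$ in $\mathbb{R}^2$, so $d_H(A_0,A_\delta)\le\delta$. Then $\Theta(\widetilde\xi)=|\widetilde\xi_1+\delta\widetilde\xi_2|-|\widetilde\xi_1|$, and $\Theta(0,-1)-\Theta(\delta/2,-1)=\delta$, whereas $\|z-\bar z\|\,d_H\le\delta^2/2$.

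The missing ingredient is Lipschitz stability of the inner \emph{argmin} in the nominal flow, uniformly in $\sigma$. It rests on structure your proposal never uses: the constraint matrix of $\ambset(\mu,\eps)$ is fixed, and only its right-hand side moves affinely with $\mu$. The paper proceeds as follows.
\begin{itemize}
\item It writes the inner minimizer as the projection $P_j(-c_j(\widetilde\xi^j)/\sigma,\xi(o))$.
\item It uses the active-set formula $P_j(v,\mu)=v+H_I^\dagger(h_I-R_I\mu-H_Iv)$ on finitely many polyhedral pieces. This gives a Lipschitz constant $\max_I\|H_I^\dagger R_I\|_{2\to2}$ in $\mu$ that is independent of $v$, hence of $\sigma$.
\item It combines Danskin's gradient formula in $\widetilde\xi_t^j$ with the mean-value theorem to obtain the product bound.
\end{itemize}

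Your ``same argument at $\sigma=0$'' for $\bestresp_\entparam$ inherits the same gap. Moreover, at $\sigma=0$ the inner minimizer is no longer unique, so an argmin-stability argument would not apply directly there. The paper instead transfers the Lipschitz estimate through the uniform bound $\sup_{o}\|\bestresp^\sigma_\entparam(o)-\bestresp_\entparam(o)\|_{\onorm,\infty}=O(\sqrt{\sigma/\entparam})$ and then lets $\sigma\to0^+$.
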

\end{myOCP}

\begin{proof}

Fix \(j\in[M]\). We first establish the regularity of the inner minimizer; our proof relies on techniques from variational analysis \cite{ref:DonRoc-14} and polyhedral partition \cite{ref:morari2015predictive}. To this end, we first establish uniqueness. Throughout the proof, \(\Phi_{j,\entparam}^{0}\Let\Phi_{j,\entparam}\) and \(\bestresp_{j,\entparam}^{0}\Let\bestresp_{j,\entparam}\), with the same fixed \(\eps\in(0,1)\).

\setcounter{proofstep}{0}
\noindent\proofstep{step:one:cont}{Establishing uniqueness}
By Assumption \ref{assum:eps:0:1}, for every \(\widetilde \xi^j=\{\widetilde \xi_t^j\}_{t=0}^T\), define the vector \(c_j(\widetilde \xi^j)\) by
\begin{align*}
\bigl[c_j(\widetilde \xi^j)\bigr]_{k,t,y}\Let\sum_{x\in\X}\widetilde \xi_t^j(x)A_t^{j,k}(x,y)\quad \text{for } k\neq j,\, t=0,\ldots,T,\text{ and } y\in\X.
\end{align*}
Note that, we view \(c_j(\widetilde{\xi}^j)\) and \(\widehat{\mu}^{-j}\) \edit{as} vectors indexed by \((k,t,y)\) with \(k\neq j\) so that \(\inprod{c_j(\widetilde \xi^j)}{\widehat\mu^{-j}}=\sum_{t=0}^{T}\sum_{k\neq j}\sum_{y\in\X}[c_j(\widetilde \xi^j)]_{k,t,y}\widehat{\mu}_t^k(y)\).
Then, by Assumptions~\ref{assum:eps:one:case} and~\ref{assum:eps:0:1}, the terms in the inner objective of \eqref{eq:inner:reg:appen} that depend on \(\widehat\mu^{-j}\) are given by \(\inprod{c_j(\widetilde \xi^j)}{\widehat\mu^{-j}}+\frac{\sigma}{2}\left\|\widehat\mu^{-j}\right\|_2^2.\)
Consequently, the inner minimizers of \eqref{eq:inner:reg:appen} coincide with the minimizers of
\begin{align}\label{eq:proof:inner:regularized:QP}
\min_{\widehat\mu^{-j}\in\ambset(\xi(o),\eps)}\left[\left\langle c_j(\widetilde \xi^j),\widehat\mu^{-j}\right\rangle+\frac{\sigma}{2}\left\|\widehat\mu^{-j}\right\|_2^2\right].
\end{align}
The ambiguity set \(\ambset(\xi(o),\eps)\) is nonempty, compact, and convex (Proposition \ref{prop:ambiguity:set}).
Since \(\sigma>0\), the objective in \eqref{eq:proof:inner:regularized:QP} is \(\sigma\)-strongly convex in \(\widehat\mu^{-j}\). Therefore, \eqref{eq:proof:inner:regularized:QP} admits a \emph{unique} minimizer.
We will denote it by \(\widehat\mu_{\sigma}^{-j,\star}(\widetilde{o}^j;o)=\{\widehat\mu_{\sigma,t}^{-j,\star}(\widetilde{o}^j;o)\}_{t=0}^T\), where \(\widehat\mu_{\sigma,t}^{-j,\star}(\widetilde{o}^j;o)\) denotes the tuple of opponents' state distributions at time \(t\).

\noindent\proofstep{step:two:cont}{Establishing structure and regularity}
We next establish a Lipschitz estimate for the inner minimizer with a constant independent of \(\sigma\).
By Proposition~\ref{prop:ambiguity:set}, the occupation constraints and the auxiliary-variable representation of the total-variation constraints are jointly linear in the nominal and adversarial flows.
Since a linear projection of a polyhedron is again a polyhedron \cite[Theorem~19.3]{ref:Roc:ConvAnal-70}, there exist fixed matrices \(E_j,F_j,C_j,G_j\) and fixed vectors \(e_j,d_j\), depending only on \(j\) and \(\eps\), such that, for every \(\mu\in\xi(\Occ)\),
\begin{align}\label{eq:proof:ambiguity:polyhedral:representation}
\ambset(\mu,\eps)=\aset[\big]{\nu\suchthat E_j\nu+F_j\mu=e_j,\quad C_j\nu+G_j\mu\le d_j}.
\end{align}
Here \(\nu\) denotes the vector obtained by stacking the opponent flows.

Recall that \(\widehat\mu_{\sigma}^{-j,\star}(\widetilde{o}^j;o)\) is the unique minimizer of \eqref{eq:proof:inner:regularized:QP} (established in the previous step). For an arbitrary vector \(v\) of the same dimension as \(\nu\), define the projection operator \cite[Chapter 1]{ref:parikh2014proximal}
\begin{align}\label{eq:proof:projection:def}
P_j(v,\mu)\Let\operatorname*{argmin}_{\nu\in\ambset(\mu,\eps)}\frac{1}{2}\left\|\nu-v\right\|_2^2.
\end{align}
First, note that, since \(\ambset(\mu,\eps)\) is nonempty, closed, and convex, \(P_j(v,\mu)\) is well defined and single-valued. The \emph{primary idea} of this step is to first, interpret the minimizer \(\widehat\mu_{\sigma}^{-j,\star}(\widetilde{o}^j;o)\) as a projection along the lines of \cite[Chapter 6]{ref:parikh2014proximal} and then, following the active-set analysis\footnote{This is a well-known route to tackle multiparametric quadratic programs and have been applied to the so-called `explicit model predictive control' in control theory \cite{ref:morari2015predictive,ref:bemporad2002explicit}.} of \cite[Section 4]{ref:bemporad2002explicit}, to establish a piecewise-affine representation of \(P_j\) and to show that \(P_j(v,\cdot)\) is Lipschitz on \(\xi(\Occ)\), with a Lipschitz constant independent of \(v\).

To this end, let \(\mathcal R_j\) denote the row-index set of \(C_j\).
For every \(I\subseteq\mathcal R_j\), let \(C_{j,I}\) and \(G_{j,I}\) denote the submatrices of \(C_j\) and \(G_j\), respectively, formed by the rows indexed by \(I\), let \(d_{j,I}\) denote the corresponding subvector of \(d_j\), and let \(I^c\Let\mathcal R_j\setminus I\).
Define
\begin{align}\label{eq:proof:HI:RI:hi}
H_I\Let\begin{bmatrix}E_j\\ C_{j,I}\end{bmatrix},\quad R_I\Let\begin{bmatrix}F_j\\ G_{j,I}\end{bmatrix},\quad h_I\Let\begin{bmatrix}e_j\\ d_{j,I}\end{bmatrix}.
\end{align}

Fix \(v\) and \(\mu\), and write \(\nu=P_j(v,\mu)\).
By the variational-inequality characterization of Euclidean projection \cite[Theorem~2A.7]{ref:DonRoc-14} together with the normal-cone representation for a polyhedron \cite[Theorem~2E.3]{ref:DonRoc-14}, there exist a subset \(I\subseteq\mathcal R_j\), an equality multiplier \(\zeta\), and a multiplier \(\lambda_I\ge0\) such that
\begin{subequations}\label{eq:proof:projection:optimality}
\begin{align}
E_j\nu+F_j\mu&=e_j, \label{eq:proof:projection:optimality:a}\\
C_{j,I}\nu+G_{j,I}\mu&=d_{j,I}, \label{eq:proof:projection:optimality:b}\\
C_{j,I^c}\nu+G_{j,I^c}\mu&\le d_{j,I^c}, \label{eq:proof:projection:optimality:c}\\
\nu-v+E_j^\top\zeta+C_{j,I}^\top\lambda_I&=0,\qquad \lambda_I\ge0. \label{eq:proof:projection:optimality:d}
\end{align}
\end{subequations}
%
%\bhavs{do we fix $e_j$ and $d_j$? do they depend on $\nu$ or $v$?}\sid{No, they are fixed.}
Conversely, if \(\nu,\zeta,\lambda_I\) satisfy \eqref{eq:proof:projection:optimality} for some \(I\subseteq\mathcal R_j\), then \(\nu=P_j(v,\mu)\). Equations \eqref{eq:proof:projection:optimality:a}--\eqref{eq:proof:projection:optimality:b} give
\begin{align}\label{eq:proof:HI:equality}
H_I\nu=h_I-R_I\mu.
\end{align}
Moreover, \eqref{eq:proof:projection:optimality:d} implies \(\nu-v\in\operatorname{range}(H_I^\top),\) which together with \eqref{eq:proof:HI:equality} uniquely determine the Euclidean projection of \(v\) onto the affine set \(\aset[]{\eta \suchthat H_I\eta=h_I-R_I\mu}\).
Therefore \cite[Chapter 6]{ref:parikh2014proximal},
\begin{align}\label{eq:proof:projection:affine:formula}
P_j(v,\mu)=v+H_I^\dagger\bigl(h_I-R_I\mu-H_Iv\bigr),
\end{align}
where \(H_I^\dagger\) denotes the Moore--Penrose pseudoinverse.
For each \(I\subseteq\mathcal R_j\), define
\begin{align*}
\mathcal{Q}_I\Let \aset[\big]{(v,\mu,\nu,\zeta,\lambda_I) \suchthat  \mu\in\xi(\Occ) \text{ and }\eqref{eq:proof:projection:optimality:a}-\eqref{eq:proof:projection:optimality:d} \text{ holds}}
\end{align*}
Since \(\Occ\) is a polytope and \(\xi\) is linear, \(\xi(\Occ)\) is a polytope %\bhavs{confirming: this is basically a slice of $o$ right?} \sid{Yes, it is kind of a slice, i.e., we select the state flow of the occ. meas.}.
Hence \(\mathcal Q_I\) is a polyhedron. Let
\begin{align*}
\mathcal{P}_I\Let\aset[\big]{(v,\mu)\suchthat \text{there exist }\nu,\zeta,\lambda_I\text{ with }(v,\mu,\nu,\zeta,\lambda_I)\in\mathcal Q_I}.
\end{align*}
Note that, the set \(\mathcal{P}_I\) is the coordinate projection of the polyhedron \(\mathcal{Q}_I\), and is therefore itself a polyhedron \cite[Theorem~19.3]{ref:Roc:ConvAnal-70}.
%
% The finitely many sets \(\mathcal P_I\), \(I\subseteq\mathcal R_j\), cover \(\mathbb R^{\dim(\nu)}\times\xi(\Occ)\) \bhavs{is this trivial?} \sid{it follow due to the optimality conditions}.
\edit{For every \((v,\mu)\in\Real[\dim(\nu)]\times\xi(\Occ)\), the projection \(P_j(v,\mu)\) exists and satisfies \eqref{eq:proof:projection:optimality} for some \(I\subseteq\mathcal{R}_j\) and suitable multipliers. By the definitions of \(\mathcal{Q}_I\) and \(\mathcal{P}_I\), this implies \((v,\mu)\in\mathcal{P}_I\). Hence, these finitely many sets cover the entire parameter domain; see also \cite[Section 4.2, Theorem 4]{ref:bemporad2002explicit}.}
Moreover, on each \(\mathcal{P}_I\), the projection mapping is given by the affine formula \eqref{eq:proof:projection:affine:formula}.
\edit{Notice that, for fixed \(v\), the expression in \eqref{eq:proof:projection:affine:formula} is affine in \(\mu\), with coefficient matrix \(-H_I^\dagger R_I\)}.  Let \(\|A\|_{2\to2} \Let \sup_{\|z\|_2=1}\|Az\|_2\) denotes the operator norm induced by the Euclidean norms on the domain and codomain; see \cite{ref:guth2025estimating}. Consequently, we set
\begin{align}\label{eq:proof:kappa:def}
\kappa_{j,\eps}\Let\max_{I\subseteq\mathcal R_j}\left\|H_I^\dagger R_I\right\|_{2\to2}.
\end{align}
Since \(\mathcal R_j\) is finite, \(\kappa_{j,\eps}<+\infty\). Fix \(v\) and \(\mu,\bar\mu\in\xi(\Occ)\), and consider the combination \(\mu_s\Let(1-s)\mu+s\bar\mu\) for all \(s\in[0,1]\).
Because \(\xi(\Occ)\) is convex, \(\mu_s\in\xi(\Occ)\) for every \(s\in[0,1]\).
For fixed \(v\), each intersection \(\aset[\big]{s\in[0,1]\suchthat (v,\mu_s)\in\mathcal P_I}\) is an interval (possibly empty) because \(\mathcal{P}_I\) is convex. Since only finitely many sets \(\mathcal P_I\) occur and they cover the segment, there exists a finite partition \(0=s_0<s_1<\cdots<s_N=1\) such that, on every interval \([s_{\ell-1},s_\ell]\), one affine representation \eqref{eq:proof:projection:affine:formula} is valid. \edit{Now notice that \( P_j(v,\mu_{s_\ell})-P_j(v,\mu_{s_{\ell-1}}) = -H_{I_{\ell}}^{\dagger}R_{I_{\ell}}  (\mu_{s_\ell}-\mu_{s_{\ell-1}})\); taking norm and expanding it further, we obtain the estimate}
\begin{align*}
\left\|P_j(v,\mu_{s_\ell})-P_j(v,\mu_{s_{\ell-1}})\right\|_2
&\le \left\|H_{I_\ell}^\dagger R_{I_\ell}\right\|_{2\to2}
\left\|\mu_{s_\ell}-\mu_{s_{\ell-1}}\right\|_2 \le \kappa_{j,\eps}\left\|\mu_{s_\ell}-\mu_{s_{\ell-1}}\right\|_2.
\end{align*}
Summing over \(\ell=1,\ldots,N\) and using \(\sum_{\ell=1}^N\left\|\mu_{s_\ell}-\mu_{s_{\ell-1}}\right\|_2
=\left\|\mu-\bar\mu\right\|_2,\) we obtain
\begin{align}\label{eq:proof:projection:uniform:lipschitz}
\left\|P_j(v,\mu)-P_j(v,\bar\mu)\right\|_2
\le\kappa_{j,\eps}\left\|\mu-\bar\mu\right\|_2.
\end{align}

We now return to the QP \eqref{eq:proof:inner:regularized:QP} and employ the standard completion of square, which yields
\begin{align*}
\left\langle c_j(\widetilde\xi^j),\widehat\mu^{-j}\right\rangle+\frac{\sigma}{2}\left\|\widehat\mu^{-j}\right\|_2^2
=\frac{\sigma}{2}\left\|\widehat\mu^{-j}+\frac{c_j(\widetilde\xi^j)}{\sigma}\right\|_2^2-\frac{1}{2\sigma}\left\|c_j(\widetilde\xi^j)\right\|_2^2.
\end{align*}
Therefore, we now have the characterization
\begin{align}\label{eq:proof:inner:projection:representation}
\widehat\mu_{\sigma}^{-j,\star}(\widetilde{o}^j;o)
=P_j\left(-\frac{c_j(\widetilde\xi^j)}{\sigma},\xi(o)\right).
\end{align}
For fixed \(\widetilde{o}^j\), the first argument of \(P_j\) in \eqref{eq:proof:inner:projection:representation} is the same when \(o\) is replaced by \(\bar{o}\).
Hence \eqref{eq:proof:projection:uniform:lipschitz} yields
\begin{align}\label{eq:proof:inner:optimizer:euclidean:lipschitz}
\left\|\widehat\mu_{\sigma}^{-j,\star}(\widetilde{o}^j;o)-\widehat\mu_{\sigma}^{-j,\star}(\widetilde{o}^j;\bar{o})\right\|_2
\le\kappa_{j,\eps}\left\|\xi(o)-\xi(\bar{o})\right\|_2.
\end{align}
The constant in \eqref{eq:proof:inner:optimizer:euclidean:lipschitz} is therefore independent of \(\sigma\). Finally, from definition of occupation measure, we have
\begin{align*}
\left\|\xi(o)-\xi(\bar{o})\right\|_2
\le\sqrt{M(T+1)}\left\|o-\bar{o}\right\|_{\onorm,\infty}.
\end{align*}
Moreover, for every \(k\neq j\) and \(t=0,\ldots,T\),
\begin{align*}
\dtv{\widehat\mu_{\sigma,t}^{k,\star}(\widetilde{o}^j;o),\widehat\mu_{\sigma,t}^{k,\star}(\widetilde{o}^j;\bar{o})}
&=\frac{1}{2}\left\|\widehat\mu_{\sigma,t}^{k,\star}(\widetilde{o}^j;o)-\widehat\mu_{\sigma,t}^{k,\star}(\widetilde{o}^j;\bar{o})\right\|_1 \nn \\&\le\frac{\sqrt{|\X|}}{2}\left\|\widehat\mu_{\sigma}^{-j,\star}(\widetilde{o}^j;o)-\widehat\mu_{\sigma}^{-j,\star}(\widetilde{o}^j;\bar{o})\right\|_2.
\end{align*}
Thus, with
\begin{align}\label{eq:proof:Lambda:def}
\Lambda_{j,\eps}\Let\frac{1}{2}\sqrt{|\X|M(T+1)}\,\kappa_{j,\eps},
\end{align}
we obtain, for every \(\sigma>0\), \(\widetilde{o}^j\in\Occ^j\), and \(o,\bar{o}\in\Occ\),
\begin{align}\label{eq:proof:inner:optimizer:lipschitz}
\max_{k\neq j}\max_{0\le t\le T}
\dtv{\widehat\mu_{\sigma,t}^{k,\star}(\widetilde{o}^j;o),\widehat\mu_{\sigma,t}^{k,\star}(\widetilde{o}^j;\bar{o})}
\le\Lambda_{j,\eps}\left\|o-\bar{o}\right\|_{\onorm,\infty}.
\end{align}

\noindent\proofstep{step:three:cont}{Lipschitz estimate and contractivity of \(\bestresp\)}
We now derive the Lipschitz estimate for the best-response mapping. Define the non-entropic component of the inner-regularized payoff by
\begin{align}\label{eq:nonentropic:cost}
H_{j,\sigma}(\widetilde \xi^j;o)\Let\min_{\widehat\mu^{-j}\in\ambset(\xi(o),\eps)}\left[\sum_{t=0}^T\sum_{x\in\X}\widetilde \xi_t^j(x)r_t^j\bigl(x,\xi_t^j,\widehat\mu_t^{-j}\bigr)+\frac{\sigma}{2}\left\|\widehat\mu^{-j}\right\|_2^2\right],
\end{align}
so that \(\Phi_{j,\entparam}^{\sigma}(\widetilde{o}^j;o)=H_{j,\sigma}(\widetilde \xi^j;o)+\Psi_{j,\entparam}(\widetilde{o}^j).\)
\edit{For fixed \(\tilde\xi^j\) and \(o\), the objective in \eqref{eq:nonentropic:cost} is the sum of an affine function of \(\widehat\mu^{-j}\) and the \(\sigma\)-strongly convex quadratic term. Since the feasible set is nonempty, compact, and convex, the minimizer exists and is unique.}
% The feasible set of \eqref{eq:nonentropic:cost} is independent of \(\widetilde \xi^j\), and the minimizer is unique since the objective is affine in \(\widehat\mu^{-j}\) apart from the strictly convex quadratic term 
%\bhavs{read weird, maybe we can just say that the objective is affine + strictly convex therefor unique minimizer?}.\sid{Yes, rewrote.}
%
For fixed \(o\), compactness of \(\ambset(\xi(o),\eps)\), joint continuity of the objective in \((\widetilde{\xi}^j,\widehat\mu^{-j})\), and uniqueness of the minimizer in \eqref{eq:nonentropic:cost} imply that this minimizer depends continuously on \(\widetilde \xi^j\). Therefore, Danskin's theorem \cite{ref:danskin2012theory} gives continuous differentiability and yields
\begin{align}\label{eq:proof:danskin:gradient}
\nabla_{\widetilde \xi_t^j}H_{j,\sigma}(\widetilde \xi^j;o)=a_t^j\left(\cdot,\widehat\mu_{\sigma,t}^{-j,\star}(\widetilde{o}^j;o)\right)+b_t^j(\cdot,\xi_t^j)\text{ for all }t=0,\ldots,T.
\end{align}
Fix \(o,\bar{o}\in\Occ\), and define \(\Delta_j(\widetilde{o}^j)\Let\Phi_{j,\entparam}^{\sigma}(\widetilde{o}^j;\bar{o})-\Phi_{j,\entparam}^{\sigma}(\widetilde{o}^j;o).\)
The entropy terms cancel in \(\Delta_j\), and hence, by \eqref{eq:proof:danskin:gradient},
\begin{align}\label{eq:proof:delta:gradient}
\nabla_{\widetilde \xi_t^j}\Delta_j(\widetilde{o}^j)=\,&a_t^j\left(\cdot,\widehat\mu_{\sigma,t}^{-j,\star}(\widetilde{o}^j;\bar{o})\right)-a_t^j\left(\cdot,\widehat\mu_{\sigma,t}^{-j,\star}(\widetilde{o}^j;o)\right)+b_t^j(\cdot,\bar \xi_t^j)-b_t^j(\cdot,\xi_t^j).
\end{align}
It follows from Assumption~\ref{assum:eps:0:1} and the definition of \(\ell_{j,t}^a\) that
\begin{align}\label{eq:proof:affine:reward:lipschitz}
\left\|a_t^j(\cdot,\nu^{-j})-a_t^j(\cdot,\bar\nu^{-j})\right\|_\infty\le\ell_{j,t}^a\max_{k\neq j}\left\|\nu^k-\bar\nu^k\right\|_1,
\end{align}
for every \(\nu^{-j},\bar\nu^{-j}\in\P(\X)^{M-1}\). Combining \eqref{eq:proof:inner:optimizer:lipschitz}, \eqref{eq:proof:affine:reward:lipschitz}, and the identity \(\left\|\mu-\bar{\mu}\right\|_1=2\dtv{\mu,\bar{\mu}}\),  we obtain %\bhavs{the way you define total variation as max over k and t is different from how we define it in the main text - not an issue, we should just be clear (in the main text we sum over k and t)} \sid{Right, it is not using some different notion, its the same.}
\begin{align}\label{eq:proof:opponent:reward:gradient:bound}
\left\|a_t^j\left(\cdot,\widehat\mu_{\sigma,t}^{-j,\star}(\widetilde{o}^j;\bar{o})\right)-a_t^j\left(\cdot,\widehat\mu_{\sigma,t}^{-j,\star}(\widetilde{o}^j;o)\right)\right\|_\infty\le2\ell_{j,t}^a\Lambda_{j,\eps}\left\|o-\bar{o}\right\|_{\onorm,\infty}.
\end{align}
Moreover, Assumption~\ref{assum:eps:one:case} gives
\begin{align}\label{eq:proof:self:reward:gradient:bound}
\left\|b_t^j(\cdot,\bar \xi_t^j)-b_t^j(\cdot,\xi_t^j)\right\|_\infty\le\ell_{j,t}\left\|\bar \xi_t^j-\xi_t^j\right\|_1\le\ell_{j,t}\left\|o-\bar{o}\right\|_{\onorm,\infty}.
\end{align}
Every occupation measure in \(\Occ\) has the prescribed initial distributions, and every element of the ambiguity set has the same prescribed opponent initial distributions. Therefore,
\begin{align*}
\xi_0^j=\bar \xi_0^j=\mu_0^j,\,\,\text{and}\,\,\widehat\mu_{\sigma,0}^{k,\star}(\widetilde{o}^j;o)=\widehat\mu_{\sigma,0}^{k,\star}(\widetilde{o}^j;\bar{o})=\mu_0^k\text{ for all }k\neq j,
\end{align*}
and hence \(\nabla_{\widetilde \xi_0^j}\Delta_j(\widetilde{o}^j)=0\). It follows from \eqref{eq:proof:delta:gradient}--\eqref{eq:proof:self:reward:gradient:bound} that, for \(t=1,\ldots,T\),
\begin{align}\label{eq:proof:delta:gradient:bound}
\left\|\nabla_{\widetilde \xi_t^j}\Delta_j(\widetilde{o}^j)\right\|_\infty\le\left(\ell_{j,t}+2\ell_{j,t}^a\Lambda_{j,\eps}\right)\left\|o-\bar{o}\right\|_{\onorm,\infty}.
\end{align}
Let \(\widetilde{o}^j,\underline{\widetilde{o}}^j\in\occset^j\), and let \(\widetilde{o}_s^j\Let(1-s)\widetilde{o}^j+s\underline{\widetilde{o}}^j\) for \(s\in\lcrc{0}{1}.\)
Since \(\Occ^j\) is convex, \(\widetilde{o}_s^j\in\Occ^j\) for every \(s\in\lcrc{0}{1}\). Using the fundamental theorem of calculus, H\"older's inequality, and \eqref{eq:proof:delta:gradient:bound}, we have
\begin{align*}
\left|\Delta_j(\widetilde{o}^j)-\Delta_j(\underline{\widetilde{o}}^j)\right|&\le\sum_{t=1}^T\left\|\widetilde \xi_t^j-\underline{\widetilde \xi}_t^j\right\|_1\sup_{s\in\lcrc{0}{1}}\left\|\nabla_{\widetilde \xi_t^j}\Delta_j(\widetilde{o}_s^j)\right\|_\infty\\ &\le\left(\sum_{t=1}^T\ell_{j,t}+2\Lambda_{j,\eps}\sum_{t=1}^T\ell_{j,t}^a\right)\left\|o-\bar{o}\right\|_{\onorm,\infty}\left\|\widetilde{o}^j-\underline{\widetilde{o}}^j\right\|_{\onorm}\\ &=\left(L_j+2L_j^a\Lambda_{j,\eps}\right)\left\|o-\bar{o}\right\|_{\onorm,\infty}\left\|\widetilde{o}^j-\underline{\widetilde{o}}^j\right\|_{\onorm}.
\end{align*}
Thus, the Lipschitz constant is
\begin{align}\label{eq:proof:delta:lipschitz}
\operatorname{Lip}(\Delta_j)\le\left(L_j+2L_j^a\Lambda_{j,\eps}\right)\left\|o-\bar{o}\right\|_{\onorm,\infty}.
\end{align}
For fixed \(o\in\occset\) and \(\widehat\mu^{-j}\in\ambset(\xi(o),\eps)\), the mapping
\begin{align*}
\widetilde{o}^j\longmapsto\sum_{t=0}^T\sum_{x\in\X}\widetilde \xi_t^j(x)r_t^j\bigl(x,\xi_t^j,\widehat\mu_t^{-j}\bigr)+\frac{\sigma}{2}\left\|\widehat\mu^{-j}\right\|_2^2
\end{align*}
is affine. Therefore, \(H_{j,\sigma}(\cdot;o)\), being the pointwise infimum of affine functions, is concave on \(\Occ^j\). By Lemma~\ref{lem:strong:concavity:entreg}, \(\Psi_{j,\entparam}\) is \(\entparam\)-strongly concave on \(\Occ^j\) with respect to \(\left\|\cdot\right\|_{\onorm}\).
Hence, \(\Phi_{j,\entparam}^{\sigma}(\cdot;o)\) is \(\entparam\)-strongly concave on \(\Occ^j\). Since \(\Occ^j\) is compact and \(\Phi_{j,\entparam}^{\sigma}(\cdot;o)\) is continuous, \(\bestresp_{j,\entparam}^{\sigma}(o)\) is nonempty, and strong concavity implies that it is a singleton.

Let \(z^j\Let\bestresp_{j,\entparam}^{\sigma}(o),\) and \(\bar{z}^j\Let\bestresp_{j,\entparam}^{\sigma}(\bar{o}).\)
The quadratic-growth consequence of strong concavity at a maximizer gives
\begin{align*}
\Phi_{j,\entparam}^{\sigma}(z^j;o)-\Phi_{j,\entparam}^{\sigma}(\bar z^j;o)\ge\frac{\entparam}{2}\left\|z^j-\bar z^j\right\|_{\onorm}^2,
\end{align*}
and
\begin{align*}
\Phi_{j,\entparam}^{\sigma}(\bar z^j;\bar{o})-\Phi_{j,\entparam}^{\sigma}(z^j;\bar{o})\ge\frac{\entparam}{2}\left\|z^j-\bar z^j\right\|_{\onorm}^2.
\end{align*}
Adding the preceding inequalities and recalling the definition of \(\Delta_j\), we obtain
\begin{align*}
\entparam\left\|z^j-\bar z^j\right\|_{\onorm}^2&\le\Phi_{j,\entparam}^{\sigma}(z^j;o)-\Phi_{j,\entparam}^{\sigma}(\bar z^j;o)+\Phi_{j,\entparam}^{\sigma}(\bar z^j;\bar{o})-\Phi_{j,\entparam}^{\sigma}(z^j;\bar{o})\\ &=\Delta_j(\bar z^j)-\Delta_j(z^j)\le\left(L_j+2L_j^a\Lambda_{j,\eps}\right)\left\|o-\bar{o}\right\|_{\onorm,\infty}\left\|z^j-\bar z^j\right\|_{\onorm},
\end{align*}
where the last inequality follows from \eqref{eq:proof:delta:lipschitz}. If \(z^j=\bar z^j\), then \eqref{eq:inner:regularized:BR:lipschitz:appen} holds trivially. Otherwise, dividing the preceding inequality by \(\entparam\left\|z^j-\bar z^j\right\|_{\onorm}\) yields
\begin{align*}
\left\|\bestresp_{j,\entparam}^{\sigma}(o)-\bestresp_{j,\entparam}^{\sigma}(\bar{o})\right\|_{\onorm}\le\frac{L_j+2L_j^a\Lambda_{j,\eps}}{\entparam}\left\|o-\bar{o}\right\|_{\onorm,\infty},
\end{align*}
which proves \eqref{eq:inner:regularized:BR:lipschitz:appen}. Finally, defining \(q_{\entparam,\eps}\Let\max_{j\in[M]}(L_j+2L_j^a\Lambda_{j,\eps})/\entparam\) and taking the maximum over \(j\in[M]\), we obtain
\begin{align}\label{eq:proof:BR:uniform:lipschitz}
&\left\|\bestresp_{\entparam}^{\sigma}(o)-\bestresp_{\entparam}^{\sigma}(\bar{o})\right\|_{\onorm,\infty}=\max_{j\in[M]}\left\|\bestresp_{j,\entparam}^{\sigma}(o)-\bestresp_{j,\entparam}^{\sigma}(\bar{o})\right\|_{\onorm}\\ &\le\max_{j\in[M]}\frac{L_j+2L_j^a\Lambda_{j,\eps}}{\entparam}\left\|o-\bar{o}\right\|_{\onorm,\infty}=q_{\entparam,\eps}\left\|o-\bar{o}\right\|_{\onorm,\infty}.
\end{align}
If \(\entparam>\max_{j\in[M]}\left(L_j+2L_j^a\Lambda_{j,\eps}\right),\) then \(q_{\entparam,\eps}<1\), and hence \(\bestresp_{\entparam}^{\sigma}:\Occ\to\Occ\) is a contraction with respect to \(\left\|\cdot\right\|_{\onorm,\infty}\), with the same contraction factor for every \(\sigma>0\).

\noindent\proofstep{step:four:cont}{Passing to \(\sigma=0\)}
Let \(D\Let(M-1)(T+1)\). Since every opponent state marginal is a probability vector, \(\left\|\widehat\mu^{-j}\right\|_2^2\le D\). Consequently, for every \(j\), \(o\in\Occ\), \(\widetilde{o}^j\in\Occ^j\), and \(\sigma>0\),
\begin{align}\label{eq:proof:payoff:uniform:approximation}
0\le\Phi_{j,\entparam}^{\sigma}(\widetilde{o}^j;o)-\Phi_{j,\entparam}(\widetilde{o}^j;o)\le\frac{\sigma D}{2}.
\end{align}
Set \(z_\sigma^j\Let\bestresp_{j,\entparam}^{\sigma}(o)\) and \(z_0^j\Let\bestresp_{j,\entparam}(o)\). By Lemma~\ref{lem:strong:concavity:entreg}, the original best response is unique.
Strong concavity of the original payoff, optimality of \(z_\sigma^j\) for the perturbed payoff, and \eqref{eq:proof:payoff:uniform:approximation} imply %\bhavs{how is $\Phi_{j,\entparam}(z_0^j;o) \leq \Phi_{j,\entparam}^{\sigma}(z_\sigma^j;o)$?}\sid{Ah, so there is term in between, i.e., \(\Phi_{j,\beta}(z_0^j;o) \le \Phi^\sigma_{j,\beta}(z_0^j;o) \le \Phi^\sigma_{j,\beta}(z_\sigma^j;o).\) Does this make sense?}
\begin{align*}
\frac{\entparam}{2}\left\|z_\sigma^j-z_0^j\right\|_{\onorm}^2&\le\Phi_{j,\entparam}(z_0^j;o)-\Phi_{j,\entparam}(z_\sigma^j;o) \le\Phi_{j,\entparam}^{\sigma}(z_\sigma^j;o)-\Phi_{j,\entparam}(z_\sigma^j;o)\le\frac{\sigma D}{2}.
\end{align*}
Thus,
\begin{align}\label{eq:proof:BR:uniform:approximation}
\sup_{o\in\Occ}\left\|\bestresp_{\entparam}^{\sigma}(o)-\bestresp_{\entparam}(o)\right\|_{\onorm,\infty}\le\delta_\sigma\Let\sqrt{\frac{\sigma D}{\entparam}}\lra0\text{ as }\sigma\to0^+.
\end{align}
Combining this with \eqref{eq:proof:BR:uniform:lipschitz} gives
\begin{align*}
\left\|\bestresp_{\entparam}(o)-\bestresp_{\entparam}(\bar{o})\right\|_{\onorm,\infty}\le2\delta_\sigma+q_{\entparam,\eps}\left\|o-\bar{o}\right\|_{\onorm,\infty}.
\end{align*}
Letting \(\sigma\to0^+\) shows that \(\bestresp_{\entparam}\) satisfies the same Lipschitz estimate.

\noindent\proofstep{step:five:cont}{Fixed points and the equilibrium mean-field flow}
Suppose now that \(\entparam>\max_{j\in[M]}(L_j+2L_j^a\Lambda_{j,\eps})\), so \(q_{\entparam,\eps}<1\). The set \(\Occ\) is nonempty and compact, hence complete under \(\left\|\cdot\right\|_{\onorm,\infty}\).
Banach's fixed-point theorem gives unique fixed points \(o_{\entparam}^{\sigma,\star}\) and \(o_{\entparam}^{\star}\) of \(\bestresp_{\entparam}^{\sigma}\) and \(\bestresp_{\entparam}\), respectively. Moreover, \eqref{eq:proof:BR:uniform:approximation} and the contraction estimate yield %\bhavs{wait LHS and RHS are the same terms? shouldn't RHS just be $\delta$ based on 73?}\sid{No its correct right? I added another step, check.}
\begin{align*}
\left\|o_{\entparam}^{\sigma,\star}-o_{\entparam}^{\star}\right\|_{\onorm,\infty} & = \left\| \bestresp_{\entparam}^{\sigma} \bigl(o_{\entparam}^{\sigma,\star}\bigr) - \bestresp_{\entparam}\bigl(o_{\entparam}^{\star}\bigr) \right\|  \nn \\& \le 
\left\| \bestresp_{\entparam}^{\sigma} \bigl(o_{\entparam}^{\sigma,\star}\bigr) - \bestresp_{\entparam}\bigl(o_{\entparam}^{\sigma,\star}\bigr) \right\| + \left\| \bestresp_{\entparam}\bigl(o_{\entparam}^{\sigma,\star}\bigr) - \bestresp_{\entparam}\bigl(o_{\entparam}^{\star}\bigr) \right\| \nn \\&
\le\delta_\sigma+q_{\entparam,\eps}\left\|o_{\entparam}^{\sigma,\star}-o_{\entparam}^{\star}\right\|_{\onorm,\infty},
\end{align*}
and therefore
\begin{align*}
\left\|o_{\entparam}^{\sigma,\star}-o_{\entparam}^{\star}\right\|_{\onorm,\infty}\le\frac{1}{1-q_{\entparam,\eps}}\sqrt{\frac{\sigma D}{\entparam}}\lra0\text{ as }\sigma\to0^+.
\end{align*}
The corresponding mean-field flows converge as well, since \(\xi\) is linear and continuous.

By the policy--occupation-measure equivalence and the uniqueness of optimal occupations from Lemma~\ref{lem:strong:concavity:entreg}, the composite operator \(\Gamma_{\entparam}\) is single-valued on feasible flows and satisfies
\begin{align*}
\xi\bigl(\bestresp_{\entparam}(o)\bigr)=\Gamma_{\entparam}\bigl(\xi(o)\bigr)\text{ for every }o\in\Occ.
\end{align*}
For any feasible initial flow \(\mu_0\), choose \(o^0\in\Occ\) with \(\xi(o^0)=\mu_0\), and set \(o^{n+1}=\bestresp_{\entparam}(o^n)\). Banach's theorem gives \(o^n\to o_{\entparam}^{\star}\); the preceding identity then shows that \(\mu^n\Let\xi(o^n)\) satisfies \(\mu^{n+1}=\Gamma_{\entparam}(\mu^n)\) and converges to \(\mu_{\entparam}^{\star}\Let\xi(o_{\entparam}^{\star})\).

Reconstructing a policy from \(o_{\entparam}^{\star}\), feasibility gives mean-field consistency and the fixed-point condition gives entropy-regularized risk-averse optimality. Thus the reconstructed policy and \(\mu_{\entparam}^{\star}\) satisfy Definition \ref{def:entropic:RA:MP:MFE}.
Conversely, every equilibrium in Definition \ref{def:entropic:RA:MP:MFE} induces a fixed point of \(\bestresp_{\entparam}\). Uniqueness of \(o_{\entparam}^{\star}\) therefore gives uniqueness of the equilibrium occupation profile and mean-field flow.
\end{proof}

\begin{remark}[Interpretation of the parameters and sensitivity constants]\label{rem:params}
The parameters \(\eps\), \(\entparam\), and \(\sigma\) serve different purposes. Recall the expression of the ambiguity set \(\ambset(\mu,\eps)\) in \eqref{eq:ambiguity:set}. The ambiguity radius \(\eps\) specifies the admissible deviations of the opponents' flows. For a fixed policy and nominal flow, increasing \(\eps\) enlarges the ambiguity set and can only decrease the worst-case reward. Thus, \(\eps\) controls the conservatism of the decision criterion. 
The regularization parameter \(\entparam\) in Definition \ref{def:entropic:RA:payoff}, weights the entropy reward, encouraging \emph{randomized policies} and providing the strong concavity used in the contraction analysis. As emphasized before entropy regularization models bounded rationality in mean-field games \cite{mazumdar2025tractable}. 

The constants \(L_j\) and \(L_j^a\) in Assumptions \ref{assum:eps:one:case} and \ref{assum:eps:0:1} respectively, quantify the sensitivity of population \(j\)'s reward to its own population distribution and the opponents' distributions, respectively. The constant \(\Lambda_{j,\eps}\) bounds the sensitivity of the inner minimizing flow to changes in the nominal profile, with the candidate occupation measure held fixed. It depends on the geometry of the dynamically feasible ambiguity sets and is independent of \(\sigma\). These are problem-dependent bounds. Under its stated assumptions, Theorem \ref{thm:contractivity:inner:regularized:BR} certifies contraction when
\begin{align*}
q_{\entparam,\eps} \Let \frac{\max_{j\in[M]}(L_j+2L_j^a\Lambda_{j,\eps})}{\entparam}<1.
\end{align*}
Increasing \(\entparam\) therefore improves the certified contraction factor, but increases the entropy perturbation of the original game. The displayed condition is sufficient and may be \emph{conservative}. When \(\eps=1\), the ambiguity set is independent of the nominal opponents' flows, and Theorem \ref{thm:contractivity:regularized:BR} gives the simpler factor \(\max_j L_j/\entparam\).

The parameter \(\sigma\) in \eqref{eq:inner:reg} makes the inner problem strongly convex, ensuring a unique minimizing flow and enabling differentiation of its optimal value. 
Positive \(\sigma\) also perturbs the payoff; taking \(\sigma\) small reduces this perturbation. \
The contraction estimate is uniform in \(\sigma>0\) and remains valid at \(\sigma=0\); the theorem therefore imposes no requirement to increase \(\entparam\) when \(\sigma\) increases.
\end{remark}

% ========================================
% ===== Thrm: Fic. Play Convergence ======
% ========================================
\section{Fictitious Play and Proof of Theorem \ref{thm:fp_FH}}\label{sec:fic-play:conv:appen}

We provide some additional details about the risk-averse fictitious play algorithm introduced in Section \ref{sec:fic:play}.
Recall from Section \ref{sec:fic:play} that the FP updates iteratively compute a best response to the population distribution induced by the average of the best responses from previous iterations.
Specifically, for each population \(j\in[M]\), each time step \(t=0,\ldots,T\), and iteration index \(\ell\ge1\), we have~\cite{perrin2020fictitiousplaymeanfield}:
\begin{equation}\label{eq:discrete-bar-mu}
\bar\mu_t^{j,\ell}=\frac{\ell-1}{\ell}\bar\mu_t^{j,\ell-1}+\frac{1}{\ell}\mu_t^{\pi^{j,\ell}}\newedit{.}
\end{equation}
The policy generating this average distribution is, for \(t=0,\ldots,T-1\), given by~\cite{perrin2020fictitiousplaymeanfield}:
\begin{equation}\label{eq:discrete-bar-pi}
\bar\pi_t^{j,\ell}(u|x)=\frac{\sum_{i=1}^{\ell}\mu_t^{\pi^{j,i}}(x)\pi_t^{j,i}(u|x)}{\sum_{i=1}^{\ell}\mu_t^{\pi^{j,i}}(x)},
\end{equation}
where $\bar\pi_t^{j,\ell}(u|x)$ is the weighted average of the best responses of population $j$ from iteration $1$ up to iteration $\ell$. The ratio is defined whenever the denominator is positive; otherwise, $\bar\pi_t^{j,\ell}(\cdot|x)$ may be chosen arbitrarily.

Fix an arbitrary population $j\in[M]$. At the starting point, $0\le s<1$, let us fix an initial policy for population $j$ as $\bar\pi^{j,s<1}=\aset[\big]{\bar\pi_t^{j,s<1}}_t=\aset[\big]{\pi_t^{j,s<1}}_t$ with induced distribution $\bar\mu^{j,s<1}=\mu^{j,s<1}=\mu^{\pi^{j,s<1}}=\aset[\big]{\mu_t^{\pi^{j,s<1}}}_t$~\cite{perrin2020fictitiousplaymeanfield}.
Moving forward, we drop the superscript $j$ for brevity since the analysis remains consistent across all populations $j\in[M]$.
The continuous-time fictitious play process is defined for all $s\ge1$ and $t\in\{0,\ldots,T\}$. For each fictitious play time \(s\ge1\), let \(\pi^{\BR,s}\) denote the $j^{\textrm{th}}$ component of a measurably selected risk-averse best response, i.e., \(\pi^{\BR,s}\in\briskopt^j(\bar\mu^s)\), where \(\bar\mu^s\) in this argument denotes the joint population flow.
Let \(\mu^{\BR,s}=\bprop^j(\pi^{\BR,s})\) denote the corresponding induced mean-field flow. The averaged flow satisfies
\begin{align}\label{eq:mean-field-derivative}
\frac{\dd}{\dd s}\bar\mu_t^s(x)=\frac{1}{s}\left(\mu_t^{\BR,s}(x)-\bar\mu_t^s(x)\right)\,\,\textrm{\newedit{equivalently},}\,\,\bar\mu_t^s(x)=\frac{1}{s}\int_0^s\mu_t^{\BR,z}(x)\dd z,
\end{align}
where the differential identity holds for almost every $s\ge1$, while the integral identity holds for every $s\ge1$. Here $\mu_t^{\BR,s}$ is induced by a best response policy $\{\pi_t^{\BR,s}\}_t$ computed against the joint averaged population flow.
From~\cite{perrin2020fictitiousplaymeanfield}, it also follows that
\begin{align}\label{eq:policy-derivative}
&\text{for }t=0,\ldots,T-1,\;\bar\mu_t^s(x)\frac{\dd}{\dd s}\bar\pi_t^s(u|x)=\frac{1}{s}\mu_t^{\BR,s}(x)\left[\pi_t^{\BR,s}(u|x)-\bar\pi_t^s(u|x)\right]\\
&\text{equivalently,}\;\bar\pi_t^s(u|x)\int_0^s\mu_t^{\BR,z}(x)\dd z=\int_0^s\mu_t^{\BR,z}(x)\pi_t^{\BR,z}(u|x)\dd z\nn.
\end{align}
For $0\le z<1$, $\pi^{\BR,z}$ and $\mu^{\BR,z}$ are taken to be the fixed initial policy and its induced flow, respectively.

We now restate and provide a detailed proof of Theorem \ref{thm:fp_FH}. Recall the expression of the risk-averse exploitability in \eqref{eq:population:RA:exploitability} and \eqref{eq:total:RA:exploitability}.

\begin{myOCP}
\begin{theorem}\label{thm:fp_FH:appen}
Let the RA-MP-MFG satisfy Assumptions~\ref{assmpt:lipschitiz-model} and \ref{assum:eps:one:monotone:reward}, and let \(\eps=1\) in \eqref{eq:amb:set:feas}. Consider the continuous-time fictitious play process \(\aset[]{\bar\pi^s,\bar\mu^s}_{s\ge1}\) defined in \eqref{eq:mean-field-derivative} and \eqref{eq:policy-derivative}. Then, for every \(j\in[M]\), the map \(s\mapsto\phi_{\RA}^j(\bar\pi^s)\) is locally absolutely continuous and satisfies
\begin{align*}
\frac{\dd}{\dd s}\phi_{\RA}^j(\bar\pi^s)\le-\frac{1}{s}\phi_{\RA}^j(\bar\pi^s)\quad\text{for almost every }s\ge1.
\end{align*}
Consequently, the exploitability is a \emph{strong Lyapunov function} along these trajectories and \(\frac{\dd}{\dd s}\phi_{\RA}(\bar\pi^s)\le-\frac{1}{s}\phi_{\RA}(\bar\pi^s)\) for almost every \(s\ge1\) and \(\phi_{\RA}(\bar\pi^s)=\mathcal{O}(1/s)\).
\end{theorem}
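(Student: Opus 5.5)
The plan is a Lyapunov computation in occupation-measure coordinates. It rests on one structural observation: for \(\eps=1\) the game decouples across populations. By \eqref{eq:fixed:amb:set}, \(\ambset(\mu,1)=\feasflow\) for every nominal flow. Together with the decomposition \eqref{eq:reward:decomp}, this gives, as in the proof of Theorem~\ref{thm:contractivity:regularized:BR:appen},
\[
\bar{\objective}_j(\pi^j;\mu)=A_j(\xi^{\pi^j})+\sum_{t=0}^{T}\sum_{x\in\X}\xi^{\pi^j}_t(x)\,b^j_t(x,\mu^j_t),
\]
where \(\xi^{\pi^j}=\B^j_\prop(\pi^j)\). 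Here \(A_j(\tilde\xi)=\inf_{\widehat\mu^{-j}\in\feasflow}\sum_{t,x}\tilde\xi_t(x)a^j_t(x,\widehat\mu^{-j}_t)\) is concave, being an infimum of linear maps. It is also Lipschitz on \(\xi(\Occ^j)\), because \(a^j\) is bounded by Assumption~\ref{assum:eps:one:case}. Consequently \(\phi^j_{\RA}(\pi)\) depends only on the own flow \(\mu^j=\mu^{\pi^j}\). Using the equivalence of Remark~\ref{rem:policy:occ:equivalence}, it equals
\[
\phi^j_{\RA}(\pi)=\max_{\xi'\in\xi(\Occ^j)}\Bigl[A_j(\xi')+\langle\xi',b^j(\mu^j)\rangle\Bigr]-A_j(\mu^j)-\langle\mu^j,b^j(\mu^j)\rangle,
\]
with \(\langle\cdot,\cdot\rangle\) summing over \(t\) and \(x\). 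Likewise, the best response computed against the joint averaged flow depends only on \(\bar\mu^{j,s}\).

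The next step is to identify the own flow of \(\bar\pi^s\) with \(\bar\mu^s\). The integral forms of \eqref{eq:mean-field-derivative}--\eqref{eq:policy-derivative} say exactly that \((\bar\mu^s_t,\ \bar\mu^s_t\bar\pi^s_t)\) is the time-average of the best-response occupation measures. Since \(\Occ^j\) is convex, this average lies in \(\Occ^j\), so \(\B^j_\prop(\bar\pi^s)=\bar\mu^s\). Writing \(\mu^{\BR,s}\) for a maximizer at time \(s\), the exploitability along the trajectory becomes
\[
\phi(s)=A(\mu^{\BR,s})+\langle\mu^{\BR,s},b(\bar\mu^s)\rangle-A(\bar\mu^s)-\langle\bar\mu^s,b(\bar\mu^s)\rangle .
\]
Regularity follows from two facts. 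First, \(s\mapsto\bar\mu^s\) is locally Lipschitz, since \(\|\tfrac{\dd}{\dd s}\bar\mu^s\|\le 2/s\) from \eqref{eq:mean-field-derivative}. Second, \(\phi\) is a Lipschitz function of \(\bar\mu^s\): the maximum of uniformly Lipschitz functions is Lipschitz, \(A\) is Lipschitz, and \(b\) is \(C^1\) on a neighbourhood of the compact simplex. Hence \(\phi\) is locally absolutely continuous.

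For the derivative at a.e. \(s\), I bound the three pieces separately.
\begin{itemize}
\item By the Danskin/envelope argument applied to the max term, whose objective depends on \(s\) only through \(b(\bar\mu^s)\), the max term has derivative at most \(\langle\mu^{\BR,s},Db(\bar\mu^s)\dot{\bar\mu}^s\rangle\).
\item For \(A(\bar\mu^s)\), concavity gives the lower bound I need. To first order, \(\bar\mu^{s+h}=(1-h/s)\bar\mu^s+(h/s)\mu^{\BR,s}\), so \(\tfrac{\dd}{\dd s}A(\bar\mu^s)\ge\tfrac1s\bigl(A(\mu^{\BR,s})-A(\bar\mu^s)\bigr)\).
\item The last term is differentiated by the product rule.
\end{itemize}
Substituting \(\dot{\bar\mu}^s=(\mu^{\BR,s}-\bar\mu^s)/s\) then yields
\[
\tfrac{\dd}{\dd s}\phi(s)\le\tfrac1s\bigl\langle\mu^{\BR,s}-\bar\mu^s,\,Db(\bar\mu^s)(\mu^{\BR,s}-\bar\mu^s)\bigr\rangle-\tfrac1s\Bigl[A(\mu^{\BR,s})-A(\bar\mu^s)+\langle\mu^{\BR,s}-\bar\mu^s,b(\bar\mu^s)\rangle\Bigr].
\]
The bracket is exactly \(\phi(s)\). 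The quadratic form is \(\le0\), which follows by differentiating the monotonicity inequality of Assumption~\ref{assum:eps:one:monotone:reward} along \(\nu'=\bar\mu^s\), \(\nu=\bar\mu^s+\tau(\mu^{\BR,s}-\bar\mu^s)\) as \(\tau\downarrow0\). Hence \(\tfrac{\dd}{\dd s}\phi\le-\phi/s\). Summing over \(j\) gives the inequality for \(\phi_{\RA}\). Then \(\tfrac{\dd}{\dd s}(s\phi_{\RA})\le0\) a.e., and absolute continuity gives \(\phi_{\RA}(\bar\pi^s)\le\phi_{\RA}(\bar\pi^1)/s\).

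The main obstacle is making the derivative step rigorous at a.e. \(s\). The maximizer \(\mu^{\BR,s}\) need not be unique, and \(A\) is only concave, not differentiable. I would handle this with one-sided difference quotients. For the max term, at a point of differentiability of \(\phi\), evaluate it at \(s+h\) using the maximizer from \(s+h\) and compare with the objective at \(s\); this controls the forward difference quotient and avoids choosing a selection. For \(A\), use the chord inequality above, controlling the first-order error through the Lipschitz bound on \(A\) and \(\bar\mu^{s+h}-\bar\mu^s-h\dot{\bar\mu}^s=o(h)\). I would check carefully that every inequality points in the direction needed for an upper bound.
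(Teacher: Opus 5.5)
Your proposal is correct and is essentially the paper's argument: you use the envelope bound on the best-response value with \(\mu^{\mathrm{BR},s}\) held fixed, the product rule on the own-flow term, and monotonicity of \(b\) to sign the quadratic form; your chord inequality for the concave \(A\) is the supergradient form of the paper's step of freezing the worst-case flow \(\nu^s\) and using \(\sum_{t,x}\mu_t^{\mathrm{BR},s}(x)\bigl[a_t(x,\nu_t^{\mathrm{BR},s})-a_t(x,\nu_t^{s})\bigr]\le 0\), and you also usefully make explicit two points the paper leaves implicit, namely that \(\bar\pi^s\) induces \(\bar\mu^s\) and that monotonicity gives the sign of the quadratic form. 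One fix to your rigorization plan for the max term \(m(s)\): do not use the maximizer from \(s+h\) with the forward quotient, since its limit points need not be the selected \(\mu^{\mathrm{BR},s}\) that drives \(\dot{\bar\mu}^s\), and without that match the quadratic-form structure is lost; instead freeze \(\mu^{\mathrm{BR},s}\), note that \(m(s+h)-m(s)\ge\sum_{t,x}\mu_t^{\mathrm{BR},s}(x)\bigl[b_t(x,\bar\mu_t^{s+h})-b_t(x,\bar\mu_t^{s})\bigr]\) for \(h\) of either sign, and take \(h\uparrow 0\) at a point of differentiability, as the paper does.
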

\end{myOCP}

\begin{proof}
Fix \(j\in[M]\). Write \(\pi^s \Let \bar\pi^{j,s}\) and \(\mu^s \Let \bar\mu^{j,s}\), and suppress the population index in \(\pi^{\BR,s}\), \(\mu^{\BR,s}\), \(r_t\), \(a_t\), and \(b_t\). The symbols \(\bar\pi^s\) and \(\bar\mu^s\) continue to denote the joint profiles. From \eqref{eq:mean-field-derivative}, we have
\begin{align}\label{eq:d-pi-tmp}
\frac{\dd}{\dd s}\mu_t^s(x)=\frac{1}{s}\left[\mu_t^{\BR,s}(x)-\mu_t^s(x)\right]\quad\text{for almost every }s\ge1.
\end{align}
Since \(\eps=1\), the ambiguity set is independent of the nominal flow, and hence \(\ambset(\bar\mu^s,1)=\ambset(\bar\mu^1,1)\) for every \(s \ge 1.\)
For each \(s\ge1\), let \(\pi^{\BR,s}\in\briskopt^j(\bar\mu^s)\) be the selected risk-averse best response, with induced flow \(\mu^{\BR,s}=\bprop^j(\pi^{\BR,s})\), and choose
%%%%
\begin{align*}
\nu^{\BR,s}&\in \argmin_{\widehat\mu^{-j}\in\ambset(\bar\mu^1,1)}\sum_{t=0}^{T}\sum_{x\in\X}\mu_t^{\BR,s}(x)r_t\bigl(x,\mu_t^s,\widehat\mu_t^{-j}\bigr).
\end{align*}
Similarly, choose a worst-case trajectory for the current policy
\begin{align*}
\nu^s&\in \argmin_{\widehat\mu^{-j}\in\ambset(\bar\mu^1,1)}\sum_{t=0}^{T}\sum_{x\in\X}\mu_t^s(x)r_t\bigl(x,\mu_t^s,\widehat\mu_t^{-j}\bigr).
\end{align*}
These optimizers exist by compactness of the feasible sets and continuity under Assumption~\ref{assmpt:lipschitiz-model}. Consequently,
\begin{align*}
\phi_{\RA}^j(\bar\pi^s)&=\max_{\tilde\pi\in\Pi^j}\bar\objective_j(\tilde\pi;\bar\mu^s)-\bar\objective_j(\pi^s;\bar\mu^s) \nn \\&= \sum_{t=0}^{T}\sum_{x\in\X}\left[\mu_t^{\BR,s}(x)r_t\bigl(x,\mu_t^s,\nu_t^{\BR,s}\bigr)-\mu_t^s(x)r_t\bigl(x,\mu_t^s,\nu_t^s\bigr)\right].
\end{align*}

We first justify the \emph{differentiation} of these optimized values. We record them via the following steps:
\begin{enumerate}[label=\textup{(Diff-\alph*)}, leftmargin=*, widest=ii, align=left]

\item \label{diff:1} The map \(z\mapsto\mu_t^{\BR,z}\) is measurable and bounded, since each \(\mu_t^{\BR,z}\) is a probability distribution. Therefore, its indefinite integral is locally absolutely continuous \cite[Chapter 3, Theorem 3.35]{ref:FolReal-99}.
Multiplication by the continuously differentiable function \(s\mapsto1/s\) preserves absolute continuity on compact intervals in \([1,+\infty)\). Thus, the integral representation in \eqref{eq:mean-field-derivative} implies that \(\mu^s\) is locally absolutely continuous.

\item \label{diff:2} Moreover, \eqref{eq:d-pi-tmp} gives
\begin{align*}
\left\|\frac{\dd}{\dd s}\mu_t^s\right\|_1=\frac{1}{s}\left\|\mu_t^{\BR,s}-\mu_t^s\right\|_1\le\frac{2}{s}\quad\text{for almost every }s\ge1.
\end{align*}
Using the fundamental theorem of calculus for absolutely continuous functions \cite[Chapter 3]{ref:FolReal-99}, we consequently obtain, for \(s\ge r\ge s_0\ge1\),
\begin{equation}\label{eq:fp:flow:lipschitz}
\left\|\mu_t^s-\mu_t^r\right\|_1\le\int_r^s\left\|\frac{\dd}{\dd z}\mu_t^z\right\|_1\dd z\le\frac{2}{s_0}|s-r|.
\end{equation}
Hence, \(\mu^s\) is locally Lipschitz.

\item \label{diff:3} Next, let \(R_j<+\infty\) be a uniform bound on \(|r_t(\cdot)|\), which exists by Assumption~\ref{assmpt:lipschitiz-model} and compactness, and recall that \(L_r^j\) was the Lipschitz constant in Assumption \ref{assmpt:lipschitiz-model}.
Since \(\eps=1\), the ambiguity set is fixed, and the nominal population flow enters the payoff only through \(\mu^s\). 
For a fixed candidate policy \(\tilde\pi\), its induced flow is independent of the nominal flow. The reward Lipschitz bound therefore applies uniformly over all candidate policies and feasible opponent flows. 
Taking an infimum (or supremum) over a fixed set preserves such a uniform bound, and consequently, 
\begin{equation}\label{eq:fp:values:lipschitz}
\begin{aligned}
\left|\max_{\tilde\pi\in\Pi^j}\bar\objective_j(\tilde\pi;\bar\mu^s)-\max_{\tilde\pi\in\Pi^j}\bar\objective_j(\tilde\pi;\bar\mu^r)\right|&\le L_r^j\sum_{t=0}^{T}\left\|\mu_t^s-\mu_t^r\right\|_1,\\
\left|\bar\objective_j(\pi^s;\bar\mu^s)-\bar\objective_j(\pi^r;\bar\mu^r)\right|&\le(R_j+L_r^j)\sum_{t=0}^{T}\left\|\mu_t^s-\mu_t^r\right\|_1.
\end{aligned}
\end{equation}
For the second inequality, we use \(\mu^s=\bprop^j(\pi^s)\) and compare the trajectory objectives at a fixed feasible opponent flow. Changing the state probabilities contributes at most \(R_j\sum_{t=0}^{T}\|\mu_t^s-\mu_t^r\|_1\), while changing the reward arguments contributes at most \(L_r^j\sum_{t=0}^{T}\|\mu_t^s-\mu_t^r\|_1\).
\end{enumerate}

Combining \eqref{eq:fp:flow:lipschitz} and \eqref{eq:fp:values:lipschitz} shows that both optimized values are locally Lipschitz in \(s\). 
Their difference, \(\phi_{\RA}^j(\bar\pi^s)\), is also locally Lipschitz by the triangle inequality.
Thus, both optimized values and their difference are locally absolutely continuous, since every Lipschitz function on a compact interval is absolutely continuous \cite[Section 6.4, Proposition 7]{ref:royden1988real}.

Fix a time \(s>1\) at which \eqref{eq:d-pi-tmp} holds and the flow and both optimized values are differentiable. Keeping \(\pi^{\BR,s}\) fixed and using \(r_t(x,\mu,\nu)=a_t(x,\nu)+b_t(x,\mu)\), we obtain, for sufficiently small \(h\) of either sign,
\begin{align*}
&\max_{\tilde\pi\in\Pi^j}\bar\objective_j(\tilde\pi;\bar\mu^{s+h})-\max_{\tilde\pi\in\Pi^j}\bar\objective_j(\tilde\pi;\bar\mu^s) \ge \bar\objective_j(\pi^{\BR,s};\bar\mu^{s+h})-\bar\objective_j(\pi^{\BR,s};\bar\mu^s)\\
&\qquad=\sum_{t=0}^{T}\sum_{x\in\X}\mu_t^{\BR,s}(x)\left[b_t(x,\mu_t^{s+h})-b_t(x,\mu_t^s)\right].
\end{align*}
Similarly, keeping \(\nu^s\) fixed gives 
\begin{align*}
&\bar\objective_j(\pi^{s+h};\bar\mu^{s+h})-\bar\objective_j(\pi^s;\bar\mu^s)  \nn \\& \qquad\quad\le \sum_{t=0}^{T}\sum_{x\in\X}\left[\mu_t^{s+h}(x)r_t(x,\mu_t^{s+h},\nu_t^s)-\mu_t^s(x)r_t(x,\mu_t^s,\nu_t^s)\right].
\end{align*}

Dividing these inequalities by \(h\) and taking limits from both sides justifies differentiating the optimized values while holding the selected optimizers fixed. Using \(\nabla_\mu r_t(x,\mu,\nu)=\nabla_\mu b_t(x,\mu)\) and \eqref{eq:d-pi-tmp}, we therefore obtain, for almost every \(s\ge1\),
\begin{align*}
&\frac{\dd}{\dd s}\phi_{\RA}^j(\bar\pi^s)=\sum_{t=0}^{T}\sum_{x\in\X}\Bigg[\mu_t^{\BR,s}(x)\inprod{\nabla_\mu r_t(x,\mu_t^s,\nu_t^{\BR,s})}{\frac{\dd}{\dd s}\mu_t^s}\\
&\qquad-\mu_t^s(x)\inprod{\nabla_\mu r_t(x,\mu_t^s,\nu_t^s)}{\frac{\dd}{\dd s}\mu_t^s}-r_t(x,\mu_t^s,\nu_t^s)\frac{\dd}{\dd s}\mu_t^s(x)\Bigg]\\
&=s\sum_{t=0}^{T}\sum_{x\in\X}\inprod{\nabla_\mu b_t(x,\mu_t^s)}{\frac{\dd}{\dd s}\mu_t^s}\frac{\dd}{\dd s}\mu_t^s(x) \nn \\& \qquad \quad- \frac{1}{s}\sum_{t=0}^{T}\sum_{x\in\X}r_t(x,\mu_t^s,\nu_t^s)\left[\mu_t^{\BR,s}(x)-\mu_t^s(x)\right]\\
&\stackrel{\mathclap{(\dag)}}{=}-\frac{1}{s}\phi_{\RA}^j(\bar\pi^s)+s\sum_{t=0}^{T}\sum_{x\in\X}\inprod{\nabla_\mu b_t(x,\mu_t^s)}{\frac{\dd}{\dd s}\mu_t^s}\frac{\dd}{\dd s}\mu_t^s(x) \nn \\& \qquad \quad + \frac{1}{s}\sum_{t=0}^{T}\sum_{x\in\X}\mu_t^{\BR,s}(x)\left[a_t(x,\nu_t^{\BR,s})-a_t(x,\nu_t^s)\right].
\end{align*}
In \((\dag)\), we employed the expression for the exploitability and our reward decomposition.
The second term in \((\dag)\) is nonpositive by Assumption \ref{assum:eps:one:monotone:reward}; we refer to \cite{perrin2020fictitiousplaymeanfield} for more details.
The third term in \((\dag)\) is also nonpositive by definition of the `inf' operator.
Consequently, \(\frac{\dd}{\dd s}\phi_{\RA}^j(\bar\pi^s)\le-\frac{1}{s}\phi_{\RA}^j(\bar\pi^s)\) for almost every \(s\ge1.\) By local absolute continuity, this implies \(\frac{\dd}{\dd s}\left[s\phi_{\RA}^j(\bar\pi^s)\right]\le0\) a.e., and summing over \(j\in[M]\), we obtain \(\frac{\dd}{\dd s}\phi_{\RA}(\bar\pi^s)\le-\frac{1}{s}\phi_{\RA}(\bar\pi^s)\) a.e. This proves our claim.
\end{proof}

\begin{remark}
Observe that Assumption~\ref{assum:eps:one:monotone:reward} imposes a monotonicity condition on the \emph{own-population} component \(b_t^{j}\) of the reward function and \emph{does not} restrict the reward component depending on the opponent-dependent populations \(a^j_t\).
It is therefore weaker (compared to the existing literature) than requiring monotonicity of the full multi-population reward with respect to the entire mean-field profile.
\end{remark}

% ========================================
% ======== Numerical details =============
% ========================================

\section{Numerical Experiments: Additional Details}\label{sec:add:num:det:appen}

In this section, we provide additional details about the numerical examples presented in the main text (Section~\ref{sec:numexp}).
In particular, we discuss in detail the dual formulation of the risk-averse objective that simplifies computation by enabling the use of off-the-shelf optimization solvers and the hyperparameters used for the experiments. 
Note that all computations are performed on the CPU, in particular, macOS 14.1.
Further, the pseudocodes of the proposed risk-averse entropy-regularized fixed point iteration and risk-averse fictitious play algorithms are presented in Algorithms~\ref{alg:fpi-risk-multi-population} and~\ref{alg:fic-play-risk-multi-population} respectively.
Note that we use Polyak averaging for the policy update in Algorithm~\ref{alg:fpi-risk-multi-population} to ensure smooth and stable updates~\cite{ref:learning:MFG:Lau}.

\begin{algorithm2e}[ht]
\caption{Fixed Point Iteration for RA-MP-MFGs}
\label{alg:fpi-risk-multi-population}
\DontPrintSemicolon
\KwIn{MP-MFG instance, entropy-regularization parameter \(\beta\), damping factor $\tau$}
\KwOut{Policy profile \(\pi\)}

Initialize
\(\pi^{\ell=0}=\{\pi_t^{j,\ell=0}\}_{t\in\{0,\ldots,T-1\},\,j\in[M]}\)\;

\For{\(\ell=0,1,\ldots\)}{
\(\mu^{\ell}=\bprop(\pi^\ell)\)\;
   \(\pi^{\ell+1}=\briskoptbeta(\mu^\ell)\)\;
   \(\pi^{\ell+1}=\tau\pi'+(1-\tau)\pi^{\ell}\)\;
    }

\Return{\(\pi\)}\;
\end{algorithm2e}

\begin{algorithm2e}[ht]
\caption{Fictitious Play for RA-MP-MFGs}
\label{alg:fic-play-risk-multi-population}
\DontPrintSemicolon
\KwIn{MP-MFG instance}
\KwOut{Policy profile \(\bar\pi\)}

Initialize \(\pi^{\ell=0}=\{\pi_t^{j,\ell=0}\}_{t\in\{0,\ldots,T-1\},\,j\in[M]}\)\;
$\bar\pi^{\ell=0} = \pi^{\ell=0}$\;
$\bar\mu^{\ell=0} = \bprop(\pi^\ell)$\;
\For{\(\ell=0,1,\ldots\)}{
\(\pi^{\ell+1} =  \briskopt(\bar\mu^\ell)\)\;
\(\mu^{\ell+1} =  \bprop(\pi^{\ell+1})\)\;
\(\bar\pi^{\ell+1} = \textrm{avg}(\pi^{0}, \ldots,\pi^{\ell+1})\) using~\eqref{eq:discrete-bar-pi}\;
\(\bar\mu^{\ell+1} = \textrm{avg}(\mu^{0}, \ldots,\mu^{\ell+1})\) using~\eqref{eq:discrete-bar-mu}\;
}

\Return{\(\bar\pi\)}\;
\end{algorithm2e}

\subsection{Grid Navigation}

In this example, both populations traverse a $5\times 5$ grid with an action space corresponding to $\texttt{\{LEFT, RIGHT, UP, DOWN, STAY\}}$ and $T=12$.
The populations transition deterministically.

Population 1, at a given state $x^1_t\in\X$, receives $r_t^1(x_t^1,\mu_t^1,\mu_t^2) \Let \mu_t^2(x_t^1)$.
We assume that Population 1 is risk-averse to deviations in Population 2's trajectory. 
The reward \(r^1_t(\cdot)\) is linear in \(\mu^2_t\) and \(\eps=1\), i.e., the ambiguity set is independent of the nominal trajectory. 
We seek to find a policy that maximizes the risk-averse payoff (Definition~\ref{def:RA:payoff}) for a fixed joint mean-field flow $\mu$, i.e., $\argmax_{\pi^1\in\Pi^1}\bar{\objective}_1(\pi^1;\mu)$.
The ensuing optimization problem is
\begin{align}\label{eq:grid-nav-opt}
\argmax_{\pi^1\in\Pi^1}\bar{\objective}_1(\pi^1;\mu)=\argmax_{\pi^1\in\Pi^1}\inf_{\hat\mu^{2}\in\mathcal{W}_1(\mu,1)}\E^{\pi^1}\left[\sum_{t=0}^{T} \hat\mu_t^2(x_t^1)\right]
\end{align}

By exploiting duality together with the occupation-measure representation,~\eqref{eq:grid-nav-opt} can be reformulated using the dual variables $V \Let \{V_t\}_{t=0}^{T}$ such that $ V_t\in\mathbb{R}^{|\X|}$, and consequently we have
\begin{equation}\label{grid-nav-opt-duality}
\begin{aligned}
& \max_{\eta^1 \ge 0,\;V} 
&& \sum_{x} \mu^2_0(x) V_0(x)  \\ &  \sbjto && \begin{cases}
\sum_u \eta^1_0(x,u)=\mu^1_0(x), & \text{for all } x,\\[2mm]
\sum_u \eta^1_{t+1}(y,u) = \sum_{x,u}\eta^1_t(x,u)f^1_t(y\mid x,u),
& \text{for all } y,t,\\[2mm] V_T(x)\leq \sum_u \eta^1_T(x,u),
& \text{for all } x,\\[2mm] V_t(x)\leq \sum_u \eta^1_t(x,u) +\sum_y f^2_t(y\mid x,u)V_{t+1}(y), & \text{for all }x,u,t.
\end{cases}
\end{aligned}\nn
\end{equation}

We use \texttt{MOSEK}~\cite{mosek} to solve~\eqref{grid-nav-opt-duality}.
The policy-occupation-measure equivalence allows us to recover the optimal policy from $\eta^{1, *}$, the maximizer of~\eqref{grid-nav-opt-duality}. 
The optimal policy for the risk-neutral Population 2 can be computed using existing algorithms for MP-MFGs, and in our work we employ online mirror descent (OMD)~\cite{perolat2021scalingmeanfieldgames}.
In our example, for Population 2, the OMD parameter $\alpha=0.05$.
Under entropy regularization,~\eqref{grid-nav-opt-duality} can be modified to include the regularization (under parameter $\beta$) as
\begin{equation}\label{grid-nav-opt-duality-beta}
\begin{aligned}
& \max_{\eta^1 \ge 0,\;V} 
&& \sum_{x} \mu^2_0(x) V_0(x) -\beta \sum_{t=0}^{T-1}\sum_{x,u}\eta^1_t(x,u) \log\left(\frac{\eta^1_t(x,u)}
{\sum_{u'}\eta^1_t(x,u')} \right) \\ &  \sbjto && \begin{cases}
\sum_u \eta^1_0(x,u)=\mu^1_0(x), & \text{for all } x,\\[2mm]
\sum_u \eta^1_{t+1}(y,u) = \sum_{x,u}\eta^1_t(x,u)f^1_t(y\mid x,u),
& \text{for all } y,t,\\[2mm] V_T(x)\leq \sum_u \eta^1_T(x,u),
& \text{for all } x,\\[2mm] V_t(x)\leq \sum_u \eta^1_t(x,u) +\sum_y f^2_t(y\mid x,u)V_{t+1}(y), & \text{for all }x,u,t.
\end{cases}
\end{aligned}
\end{equation}
As before, we employ \texttt{MOSEK}~\cite{mosek} to solve~\eqref{grid-nav-opt-duality-beta} and we can recover the optimal policy from $\eta^{1, *}$.
For Population 2, we utilize Boltzamnn Iteration~\cite{cui2021approximately}.
Table~\ref{tab:sensitivity} computes the ER-RA-MFE with $\beta=0.1$ and a damping factor $\tau=0.2$ (see Algorithm~\ref{alg:fpi-risk-multi-population}) for both populations while Figure~\ref{fig:navigation} presents an ablation over the values of $\beta$.

\subsection{Multi-Population Chasing}
In both variants of multi-population chasing ($M=3$ and $4$) populations traverse a $5\times 5$ grid and $T=5$, similar to the Navigation example. 
The population dynamics follow ``noisy" transition dynamics similar to the original examples presented in~\cite{perolat2021scalingmeanfieldgames, LanctotEtAl2019OpenSpiel}.
We use \texttt{CLARABEL}~\cite{Clarabel_2024} as the solver for the optimization problems associated with the multi-population chasing problem.

\subsubsection{3 Population Chasing} In this example, the fully expanded reward obtained by Population 1 is given by $r_t^1(x_t^1,\mu_t^1,\mu_t^2,\mu^3_t) \Let -\log(\mu^1_t(x)) + \mu^2_t(x) + \mu^3_t(x)$, which is again, linear in $\mu^2_t$.
The corresponding dual formulation using the dual variables $V_t \in\mathbb{R}^{|\X|}$ for all $t = 0, \ldots, T$ as follows
\begin{equation}\label{3-population-chasing-opt-duality}
\begin{aligned}
& \max_{\eta^1 \ge 0,\;V} 
&& \sum_{x} \mu^2_0(x) V_0(x) +\sum_{t, x}\xi_t^1(x)\Bigl(\mu^3_t(x) -\log(\mu^1_t(x))\Bigr) \\ &  \sbjto && \begin{cases}
\sum_u \eta^1_0(x,u)=\mu^1_0(x), & \text{for all } x,\\[2mm]
\sum_u \eta^1_t(x,u)=\xi^1_t(x), & \text{for all } x, t,\\[2mm]
\sum_u \eta^1_{t+1}(y,u) = \sum_{x,u}\eta^1_t(x,u)f^1_t(y\mid x,u),
& \text{for all } y,t,\\[2mm] V_T(x)\leq \sum_u \eta^1_T(x,u),
& \text{for all } x,\\[2mm] V_t(x)\leq \sum_u \eta^1_t(x,u) +\sum_y f^2_t(y\mid x,u)V_{t+1}(y), & \text{for all }x,u,t.
\end{cases}
\end{aligned}
\end{equation}

The optimal policy for the risk-neutral Populations 2 and 3 (i.e., maximize expected reward instead of risk-averse payoff) are computed using OMD with $\alpha = 0.01$.
To obtain the entropy-regularized problem, we append~\eqref{3-population-chasing-opt-duality} with the entropy term similar to~\eqref{grid-nav-opt-duality-beta}.
We report the ER-RA-MFE value in Table~\ref{tab:sensitivity} under $\beta=1$ and a damping factor $\tau=0.2$.

\subsubsection{4 Population Chasing} In this example, the fully expanded reward obtained by Population 1 is given by $r_t^1(x_t^1,\mu_t^1,\mu_t^2,\mu^3_t, \mu^4_t) \Let -\log(\mu^1_t(x)) + \mu^2_t(x) + \frac{1}{2}\mu^3_t(x) -\mu^4_t(x)$.
Population 1 is now risk-averse to Populations 2 \emph{and} 3.
The reward $r_t^1$ is linear in $\mu^2_t$ and $\mu^3_t$ and independent of cross-coupling terms like $\mu^2_t(x)\mu^3_t(x)$.
The corresponding dual formulation using the dual variables $V^2=\{V^2_t\} \in\mathbb{R}^{|\X|}$ and $V^3=\{V^3_t\} \in\mathbb{R}^{|\X|}$ for all $t = 0, \ldots, T$ is as follows

\begin{equation}\label{4-population-chasing-opt-duality}
\begin{aligned}
& \max_{\eta^1 \ge 0,\;V} 
&& \sum_{x} \mu^2_0(x) V^2_0(x) + \mu^3_0(x) V^3_0(x) +\sum_{t, x}\xi^1_t(x)\Bigl(\mu^4_t(x) -\log(\mu^1_t(x))\Bigr)\\ &  \sbjto && \begin{cases}
\sum_u \eta^1_0(x,u)=\mu^1_0(x), & \text{for all } x,\\[2mm]
\sum_u \eta^1_t(x,u)=\xi^1_t(x), & \text{for all } x, t\\[2mm]
\sum_u \eta^1_{t+1}(y,u) = \sum_{x,u}\eta^1_t(x,u)f^1_t(y\mid x,u),
& \text{for all } y,t,\\[2mm] V^2_T(x)\leq \sum_u \eta^1_T(x,u),
& \text{for all } x,\\[2mm] V^2_t(x)\leq \sum_u \eta^1_t(x,u) +\sum_y f^2_t(y\mid x,u)V^2_{t+1}(y), & \text{for all }x,u,t,
\\[2mm] V^3_T(x)\leq \sum_u \frac{1}{2}\eta^1_T(x,u),
& \text{for all } x,\\[2mm] V^3_t(x)\leq \sum_u \frac{1}{2}\eta^1_t(x,u) +\sum_y f^3_t(y\mid x,u)V^3_{t+1}(y), & \text{for all }x,u,t.
\end{cases}
\end{aligned}
\end{equation}

The optimal policy for the risk-neutral populations are computed using OMD with $\alpha = 0.01$.
To obtain the entropy-regularized problem, we append~\eqref{4-population-chasing-opt-duality} with the entropy term similar to~\eqref{grid-nav-opt-duality-beta}.
We report the ER-RA-MFE value in Table~\ref{tab:sensitivity} under $\beta=1$ and $\tau=0.2$.

\subsection{Stock Market Trading}
The discrete inventory positions are specified as: $\X \Let \{-K, \ldots, 0, \ldots, K\}$ for some integer $K>0$ representing the holdings of a population.
The actions available to each population are $\U \Let \{-1, 0, 1\}$ representing \texttt{SELL, HOLD} and \texttt{BUY} respectively.
The inventory transitions deterministically, i.e., $x_{t+1} = \textrm{clip}_{(-K, K)}(x_t + a_t)$.
The change in the stock price at time $t$ is denoted by $\Delta p_t$ and follows  $\Delta p_t = \gamma_t + \theta_t z_t$, where $\gamma_t$ represents the price drift, $\theta_t$ represents the volatility and $z\sim \textrm{Discrete}(-1, 0, 1)$, where $\E[z_t]= 0$ and $\textrm{var}(z_t) = 1$.
The rewards for the AI population follows $r^1_t(x^1_t, \mu^1_t, \mu^2_t) = \gamma_tx^1_t + c_0x^1_t\cdot \left\langle q, \mu^2_t\right\rangle - c_1(x^1_t)^2$, where $q\t = [-K, \ldots, 0, \ldots K]$ and $c_0, c_1>0$.
The term $c_0x^1_t\cdot \left\langle q, \mu^2_t\right\rangle$ represents the price pressure created by aggregate human inventory shifts and $c_1(x_t)^2$ penalizes large AI inventory positions due to volatility and holding costs.
The rewards for the human population is given by $r^2_t(x^2_t,  \mu^1_t, \mu^2_t) = \gamma_tx^2_t -w_0(\theta_t x_t)^2+ w_1x^2_t\cdot \left\langle q, \mu^2_t\right\rangle - w_2\mathds{1}_{\gamma_t < -\chi}(x^2_t + K)$, where  $-w_0(\theta_t x_t)^2$, $w_1x^2_t\cdot \left\langle q, \mu^2_t\right\rangle$ and   $w_2\mathds{1}_{\gamma_t < -\chi}(x^2_t + K)$ represent the penalty due to volatility in the price, herding behavior and penalty due to panic selling when the stock price drift falls below $-\chi$.

We calibrate the price model (in particular, $\theta$ and $\gamma$) to mimic the dynamics of Apple Inc. (AAPL)  over September 2025 ($T=20$), using data retrieved through \texttt{yfinance}~\cite{aroussi2026yfinance} and analyze the effect of risk-aversion of the AI population to the deviations in the human population's mean-field trajectory. 
We assume $K=5$, which implies $|\X| = 11$.
Other parameters are given as follows:
\begin{align*}
     c_0 &= 0.1 &  c_1 &= 0.05 &  \chi &= 0.02 \\
     w_0 &= 0.02 &  w_1 &= 0.1 & w_2 &= 0.15
\end{align*}

The reward $r^1_t$ linear in $\mu^2_t$.
However, it does not satisfy Assumption~\ref{assum:eps:one:monotone:reward}, and consequently the convergence guarantee for risk-averse fictitious play does not apply. Instead, we employ entropy-regularized fixed-point iteration to obtain a close approximation of the RA-MFE, using a ``small'' regularization parameter \(\beta=10^{-6}\) and $\tau=0.2$.
The corresponding dual formulation using the dual variables $V_t \in\mathbb{R}^{|\X|}$ for all $t = 0, \ldots, T$ as follows
\begin{equation}\label{stock-trading-opt-duality-beta}
\begin{aligned}
& \max_{\eta^1 \ge 0,\;V} 
&& J_{\rm sm} \\ &  \sbjto && \begin{cases}
\sum_u \eta^1_t(x,u)=\xi^1_t(x), & \text{for all } x, t,\\[2mm]
\sum_u \eta^1_{t+1}(y,u) = \sum_{x,u}\eta^1_t(x,u)f^1_t(y\mid x,u),
& \text{for all } y,t,\\[2mm] V_T(x)\leq c_0x\left\langle q, \xi^1_T\right\rangle,
& \text{for all } x,\\[2mm] V_t(x)\leq  c_0x\left\langle q, \xi^1_t\right\rangle +\sum_y f^2_t(y\mid x,u)V_{t+1}(y), & \text{for all }x,t,
\end{cases}
\end{aligned}
\end{equation}
where the objective function is
\begin{align}
    J_{\rm sm} &\Let \sum_{x} \mu^2_0(x) V_0(x)+ \sum_{t}\gamma_t \left\langle q, \xi^1_t\right\rangle -\sum_{t,x}c_1\xi^1_t(x)(x)^2\  \nn  \\& -\beta \sum_{t=0}^{T-1}\sum_{x,u}\eta^1_t(x,u) \log\left(\frac{\eta^1_t(x,u)}
{\sum_{u'}\eta^1_t(x,u')} \right) \nn
\end{align}
As before, we use \texttt{MOSEK}~\cite{mosek} to solve~\eqref{stock-trading-opt-duality-beta}.

\subsection{Linear Navigation}

We consider an MP-MFG with two populations traversing a three-state environment, where agents can move \texttt{LEFT}, \texttt{RIGHT}, or remain \texttt{STATIONARY} and $T=5$.
The dynamics of the game follow Figure 1(a) in~\cite{ref:YG:AA:LH:KHJ:Dist:Reach:MDP:TAC}.
Similar to the Grid Navigation problem, Population 1, at a given state $x^1_t\in\X$, receives $r_t^1(x_t^1,\mu_t^1,\mu_t^2)=\mu_t^2(x_t^1)$, which satisfies Assumption~\ref{assum:eps:0:1}. 
We assume that Population 1 is risk-averse to deviations in Population 2's trajectory.
As mentioned in the main text, for \(\eps\in(0,1)\), \(\ambset(\mu,\eps)\) depends explicitly on the nominal mean-field trajectory. 
Consequently, the inner minimization is coupled with the nominal trajectory and the dual formulation no longer provides a computational advantage.
We therefore solve the resulting max-min problem directly using the iterative \texttt{MINIMAX-APPA} algorithm proposed in~\cite{pmlr-v125-lin20a}, which is designed to handle such coupled minimax optimization problems.
In particular, we our aim to compute a solution to
\begin{align}\label{eq:linear-nav-opt}
\argmax_{\pi^1\in\Pi^1}\bar{\objective}_1(\pi^1;\mu)&=\argmax_{\pi^1\in\Pi^1}\inf_{\hat\mu^{2}\in\mathcal{W}_1(\mu,\eps)}\E^{\pi^1}\left[\sum_{t=0}^{T} \hat\mu_t^2(x_t^1)\right] \nn \\&-\beta\E^{\pi^1}\left[\sum_{t=0}^{T} \log\pi^1_t(u^1_t|x^1_t)\right] + \frac{\sigma}{2}\|\hat\mu_t^2\|^2_2 
\end{align}
Equivalently, we can reformulate~\eqref{eq:linear-nav-opt} using the policy-occupation-measure equivalence~\eqref{eq:inner:reg} and obtain,
\begin{align*}
\argmax_{\widetilde{o}^1 = (\tilde\xi^1, \tilde\eta^1)\in\occset^1}\Phi_{1,\entparam}^{\sigma}(\widetilde{o}^1;o) &= 
\argmax_{\widetilde{o}^1\in\occset^1}\min_{\hat\mu^{2}\in\mathcal{W}_1(\mu,\eps)}
\Bigg{[} \sum_{t=0}^T\sum_{x\in\X}\widetilde \xi_t^1(x) \cdot\hat\mu_t^{2}(x) +\frac{\sigma}{2}\left\|\hat\mu^{2}\right\|_2^2 \Bigg{]}  \\& -\entparam\sum_{t=0}^{T-1}\sum_{x\in\X}\sum_{u\in\U}
{\tilde\eta}_t^1(x,u) \log\frac{{\tilde\eta}_t^1(x,u)}{{\tilde\xi}_t^1(x)}. 
\end{align*}
We apply this algorithm for $\eps =0.1$, $\beta=1$ and $\sigma = 1$.
We perform 10 outer iterations and 5 inner iterations for the minimizing problem (i.e.,~\eqref{eq:inner:reg}).
Additionally, for the outer iteration, we scale the original learning rate for the minimization problem provided in~\cite{pmlr-v125-lin20a} by 0.01 for empirically stable updates.
For the maximizing variable (Population 1's occupation measure),  we perform 100 iteration of Nesterov's accelerated gradient descent and similarly scale the learning rate by 0.05 for stability.

\subsection{Generating Adversarial Mean-Field Flows}
In all our examples with \(\eps=1\), we generate deviated mean-field flows by uniformly sampling policies from \(\Pi\) and propagating each sampled policy $\pi^k$ where $ k\neq j$ to obtain the corresponding mean-field flow over the entire simplex. 
For \(\eps\in(0,1)\), we instead construct a random mean-field trajectory and project it onto \(\ambset(\mu,\eps)\). 
Recall from Proposition~\ref{prop:ambiguity:set} that the ambiguity set is a polytope. 
Consequently, the projection is well-defined and maps the sampled trajectory to a dynamically feasible flow in \(\ambset(\mu,\eps)\). 
Since the projection is onto the boundary of the ambiguity set, the resulting flow lies at the maximum prescribed deviation from the nominal flow, i.e., \(\dtv{\mu_t^k,\hat\mu_t^k}=\eps\) for all (t).
To also sample trajectories satisfying \(\dtv{\mu_t^k,\hat\mu_t^k}<\eps\), we randomly choose a smaller radius \(\eps'\in(0,\eps)\) and project the sampled trajectory onto the corresponding constraint set with \(\dtv{\mu_t^k,\hat\mu_t^k}\leq\eps'\).

\end{document}